\documentclass[11pt]{article}
\usepackage{amsfonts,amsmath,amssymb,amsthm,mathrsfs,euscript,enumerate,fullpage,color}
\usepackage[utf8]{inputenc}
\usepackage[T1]{fontenc}
\usepackage{natbib}
\usepackage{graphicx} 
\usepackage{hyperref} 
\hypersetup{
colorlinks=true,
breaklinks=true, 
urlcolor= blue,
linkcolor= blue,
citecolor=blue,
}
\usepackage{bbold}

\usepackage[titletoc,title]{appendix}

\newtheorem{theorem}{Theorem}

\newtheorem{lemma}[theorem]{Lemma}
\newtheorem{corollary}[theorem]{Corollary}
\newtheorem{proposition}[theorem]{Proposition}

\title{Integral estimation based on Markovian design}
\author{Romain Aza\"{\i}s$^a$, Bernard Delyon$^b$ and Fran\c{c}ois Portier$^c$}
\date{\small $^a$\textit{Inria Nancy -- Grand Est, Team BIGS and Institut \'Elie Cartan de Lorraine, Nancy, France}\\
$^b$ \textit{Institut de recherches math\'ematiques de Rennes, Universit\'e de Rennes 1}\\
$^c$ \textit{LTCI, CNRS, T\'el\'ecom ParisTech, Universit\'e Paris-Saclay}}

\begin{document}

\maketitle

\begin{abstract}
Suppose that a mobile sensor describes a Markovian trajectory in the ambient space. At each time the sensor measures an attribute of interest, e.g., the temperature. Using only the location history of the sensor and the associated measurements, the aim is to estimate the average value of the attribute over the space. In contrast to classical probabilistic integration methods, e.g., \textit{Monte Carlo},  the proposed approach does not require any knowledge on the distribution of the sensor trajectory. Probabilistic bounds on the convergence rates of the estimator are established. These rates are better than the traditional ``root $n$''-rate, where $n$ is the sample size, attached to other probabilistic integration methods. For finite sample sizes, the good behaviour of the procedure is demonstrated through simulations and an application 
to the evaluation of the average temperature of oceans is considered.
\end{abstract}

\noindent \textbf{Key-words:} Integral approximation; Markov chains; Nummelin splitting technique; invariant density estimation; kernel smoothing.

\section{Introduction}

For the last decades, climate scientists have been interested in the evolution of different physical 
attributes of Earth to quantify the effects of global warming. For instance, attributes such as temperature, 
acidity and salinity of oceans or the concentration of greenhouse gases in the atmosphere are 
important indicators of global warming. Scientists measurements are often provided by sensors placed on drifting buoys 
in the oceans or weather balloons in the atmosphere, each describing an area or a volume. Whenever the data have been collected, 
a crucial quantity is the average of the measurements over a given space. 
As the sensors are eventually subjected to unpredictable effects such as marine currents or winds, their trajectories 
are modelled as random sequences. The approach taken here is concerned with trajectories satisfying the Markov 
property, meaning roughly that the distribution of the location at time $t+1$ is fully determined by the location at $t$ and an independent random noise.  For the sake of realism, the underlying transition probability and the invariant probability measure associated to the Markov chain are supposed unknown. In summary, the aim is to evaluate the average value of a physical quantity over some space when the measurements are taken along the path of a Markov chain.

More formally, let $Q$ denote a given bounded and open set of $\mathbb R^d$ and suppose that 
$ \varphi: Q\rightarrow \mathbb R$ represents a physical attribute to each location in $Q$, e.g., the temperature in the air over a volume or the wind velocity on the sea over a surface. 
For simplicity, the Lebesgue measure of $Q$ is set to be $1$. Hence, we are interested in 
the average value of $\varphi$ over $Q$, defined as
\begin{align*}
I_0=\int_Q \varphi(x) dx.
\end{align*} 
In most examples of interest the function $\varphi$ is unknown and only some images of the function are obtained from measurement instruments. 
Suppose that we observe $n\in\mathbb N^*$ points from the trajectory of a 
time-homogeneous Harris recurrent Markov chain $X_1,  X_2,\ldots$ \citep{meyn+t:2009} with state space 
$\mathcal E \supseteq Q$. Suppose moreover that we know the associated images by the map $\varphi $, i.e., 
$\varphi(X_1),\ldots \varphi(X_n)$. Let $\pi$ denote the density of the stationary measure of the chain. 
If $\pi$ were known, it would be tempting to compute the \textit{Monte Carlo} estimator of $I_0$,
\begin{align*}
\widehat I_{\text{mc}}= n^{-1} \sum_{i=1}^n \frac{\varphi(X_i)}{\pi (X_i) },
\end{align*} 
which satisfies, under standard conditions \citep[chapter 17]{meyn+t:2009}, a central limit theorem, 
i.e., $n^{1/2}(\widehat I_{\text{mc}} - I_0)$ converges weakly to a centered Gaussian distribution. 
As the previous estimator requires the knowledge of $\pi$, which is not the case in our framework, 
we rather consider the following \textit{kernel smoothing} estimator of $I_0$,
\begin{align*}
\widehat I_{\text{ks}} =  n^{-1} \sum_{i=1}^n \frac{\varphi(X_i)}{\widehat \pi(X_i) },  
\end{align*}
where $\widehat \pi$ is the classical kernel estimator of the density \citep{silverman:1986}, given by,
\begin{align*}
\widehat \pi (x) = (nh_n^d)^{-1} \sum_{i=1} ^n K( (x-X_i)/h_n ), \qquad x\in \mathbb R^d,
\end{align*}
with $K : \mathbb R^d\rightarrow \mathbb R$, a symmetric function, called kernel, that integrates to $1$, 
and $(h_n)_{n\in \mathbb N^*}$, a sequence of positive numbers, called bandwidth, that goes to $0$ as $n\rightarrow +\infty$. 

As the stationary measure is unknown, we can not rely on Monte Carlo integration techniques, often used in simulation-based approximation, such as \textit{importance sampling}, \textit{control variates} or \textit{Metropolis-Hasting integration}. We refer the reader to the books \cite{evans:2000} and \cite{robert:2004} on integral approximation techniques.

The estimator $\widehat I_{\text{ks}}$ has been introduced in 
\cite{delyon2015} where the authors established bounds on the rate of convergence, in probability, in the case of independent 
and identically distributed sequence $X_1,X_2,\ldots$ Their main observation is that the convergence rate of $\widehat I_{\text{ks}}$ 
to $I_0$ is faster than the convergence rate of the Monte Carlo estimator $\widehat I_{\text{mc}}$ to $I_0$ (even though $\widehat I_{\text{mc}}$ requires the knowledge of $\pi$). 
In contrast to standard Monte Carlo methods, the main ingredient of their proposal is the evaluation 
of the image of the design points by the kernel estimator, i.e., $\widehat \pi(X_1), \ldots \widehat \pi(X_n)$. 
These quantities capture an essential information : the isolation of each point. 
Basically, the more isolated $X_i$, the larger the weight $1/\widehat \pi(X_i)$ (and conversely). 
Hence these weights realize an adaptation to the \textit{design points} by attributing more weight to the lonely points. Such a strategy takes

The main theoretical objective of the paper is to extend the results of \cite{delyon2015} when the sequence $(X_i)_{i\in \mathbb N^*}$ is a
time-homogeneous Harris recurrent Markov chain. Denote by $s$ and $r$ the (Nikolski) 
regularity of the functions $\varphi$ and $\pi$, respectively. For any set $B\subset \mathcal E$, let $\tau_B$ denote the return-time 
of the chain to $B$. If there exists $A\subset \mathcal E$ and $p_0>3$ such that
\begin{align}\label{assump:tau_A}
\sup_{x\in A}\mathbb {E}_x[\tau_A^{p_0}]<+\infty,
\end{align}
where $ \mathbb {E}_x$ is the expectation for the Markov chain starting at $X_0=x$, and if, as $n\rightarrow +\infty $,
\begin{align*}
\frac{nh_n^{d(p_0/p_0-1)}}{|\log (n)|} \rightarrow +\infty,
\end{align*}
we show (Theorem \ref{th:bigOpth}), under mild additional conditions, that, as $n\rightarrow +\infty $,
\begin{align*}
\widehat I_{\text{ks}} - I_0 = O_{\mathbb P} \left( h^r_n + n^{-1/2}h_n^s + n^{-1} h_n^{-d}  \right).
\end{align*}
This is the same convergence rate as the one provided in \cite{delyon2015} for independent and identically distributed sequences $(X_i)_{i\in\mathbb N^*}$. The previous rate is better than the rate of $\widehat I_{\text{mc}}$ whenever $n^{1/2}h_n^r \rightarrow  0$  and $n^{-1/2} h_n^{-d}\rightarrow 0$, as $n\rightarrow +\infty$. Taking $h_n \propto n^{-1/(r+d)}$, we obtain a rate in $  n^{-r/(r+d)} + n^{-1/2} n^{-s/(r+d)}  $ which is negligible before $n^{-1/2} $ if and only if $r>d$. Consequently, in addition of being \textit{consistent} when facing Markovian design, the kernel smoothing integral estimator might give an acceleration of the rate of convergence of the Monte Carlo estimator. This acceleration is unfortunately subjected to the well-known curse of the dimension as one needs that $r>d$. In contrast, a nice feature of the method is that only mild constraints are on the regularity of $\varphi$. Finally, there exists a theoretical lower bound for random integration methods \cite[Theorem 3]{Novak} which takes the following form $n^{-1/2} n^{-s/d}$ while our proposal achieves $n^{-1/2} n^{-s/2(s+d)}\geqslant n^{-1}$. This gap in efficiency might be explained by the design distribution which is imposed in our framework.

The mathematical proofs follow from a mixture between the \textit{Nummelin splitting technique} 
for Markov chains \citep{nummelin:1978}, \textit{Hoeffding-type decompositions} 
for $U$-statistics \citep[section 11.4]{vandervaart:1998}  and uniform bounds 
for kernel density estimators in the case of independent observations \citep{einmas}. More specifically, 
the Nummelin splitting technique, also called \textit{regeneration theory} and presented in section \ref{s2}, allows for dividing the chain into $l_n$ independent blocks. 
Assumption (\ref{assump:tau_A}) implies that $l_n$ and $n$ have the same order allowing us 
to mimic the approach of \cite{delyon2015} taken in the independent case: 
\begin{enumerate}[(i)]

\item  Linearise the terms $1/\widehat \pi (X_1),\ldots 1/\widehat \pi (X_n)$ by the help of a Taylor expansion. 
This is typically used in semi-parametric problems as for instance the \textit{single-index model} \citep{hardle:1989,vial:2003}.
\item Find a probabilistic bound on some degenerate $U$-statistic depending on the sequence 
$K( (X_j-X_i)/h )/h^d $, $(i,j)\in \{ 1,\ldots n\}^2$. We shall follow \cite{bertail2011} 
by using an Hoeffding-type decomposition based on the blocks.

\item Too small values of the denominator in $ \widehat I_{\text{ks}} $ are avoided by showing that 
$\inf _{x\in Q} \widehat \pi (x)$ is bounded away from $0$, with large probability. 
In particular, we show (Theorem \ref{prop:unif_conv_density}) that, as $n\rightarrow +\infty$,
\begin{align*}
\sup_{x\in \mathbb R^d } |\widehat \pi(x) -    \pi_{h_n} (x)  | \longrightarrow 0, \quad \text{in probability},
\end{align*} 
where $\pi_{h_n}(x)$ is the expectation of $\widehat \pi(x)$ under stationarity. We rely on empirical process theory and more precisely, 
on a formulation of Talagrand's inequality established in \cite{einmas}. From the best of our knowledge, the previous result in the case of general time-homogeneous Markov chains 
is new. Consistency results (non-uniform) for time-homogeneous Markov chains can be found in \cite{roussas:1969}. In the case of mixing-type dependency, uniform convergence rates are given in \cite{hansen:2008}.
\end{enumerate} 
Steps (i) and (ii) are directly developed in the proof of Theorem  \ref{th:bigOpth}, while the consistency result (iii) is presented in section \ref{s3}.

In contrast with the framework of \cite{delyon2015}, in which the density needs to be continuously differentiable on $\mathbb R^d$, we have been able to include density functions that possibly jumps at the boundary of $Q$ (see the discussion before the statement of Theorem \ref{th:bigOpth}).

To compute $\widehat I_{\text{ks}}$, the bandwidth $h_n$ and the kernel $K$ need to be chosen. 
Preliminary numerical experiments show that $\widehat I_{\text{ks}}$ is quite sensible to the values of $h_n$ whereas 
the choice of $K$ has no strong influence. In \cite{delyon2015}, $h_n$ is chosen according to both the independent points of the design and the function $\varphi$. In the present paper, we propose to use the multivariate plug-in bandwidth selection developed in \cite{Chacon2010}. A simulation study illustrates the good behaviour of the estimator with this choice of the bandwidth in various settings.

The organization of the paper is as follows. 
In section \ref{s2}, we present quickly the regeneration approach for Markov chains. 
The notations and the concepts introduced there will be useful in the rest of the paper. 
Section \ref{s3} is concerned about the uniform convergence of kernel density estimators for Markov chains. 
In section \ref{s4}, we provide the main theoretical statement of the paper which consists in a bound on the rate of convergence of $\widehat I_{\text{ks}}- I_0$.
In section \ref{s5}, we offer a large simulation study as well as a real data analysis performed from sea surface temperature data of the 3 major oceans.
Technical details about regeneration-based bounds for expectations and about the initial measure, as well as the proofs of the results of section \ref{s3}, are presented in appendices \ref{s31}, \ref{s32} and \ref{s33}.

\section{Regeneration}\label{s2}

In this section we give a short account of the regeneration theory, also referred to as the Nummelin splitting technique, as discovered 
in \cite{MR511425} and \cite{nummelin:1978}, 
extensively studied in \cite{nummelin:1984} and \cite{meyn+t:2009}.

We consider a Markov chain $X_0,X_1,X_2,\ldots$
with state space $\mathcal E$ and transition probabilities $P(x,dy)$. 
The notation $\mathbb {E}_\nu$ denotes the expectation according to the chain under $X_0\sim\nu$, 
and $\mathbb {E}_x$ in the case $\nu=\delta_x$. 
The associated probabilities are denoted by $\mathbb {P}_\nu$ and $\mathbb {P}_x$, respectively. 
We assume that for some set $A$ the hitting time
\begin{align*}
\tau_A=\min\{i\geqslant 1: X_i\in A\},
\end{align*}
satisfies
\begin{align}\label{taupr}
&\forall x\in \mathcal E, \mathbb {P}_x(\tau_A<\infty)=1,\\
&\sup_{x\in A}\mathbb {E}_x[\tau_A]<\infty.\label{tauint}
\end{align}
We assume also that for some probability measure $\psi$, some $\lambda_0>0$, and some $m_0\geqslant 1$
\begin{align}\label{ren1_0}
\forall x\in A,~\forall B~\text{measurable},\quad P^{m_0}(x,B)\geqslant \lambda_0\psi(B).
\end{align}
The previous equation means that  $A$ is a ``petite set'' in the terminology of \cite{meyn+t:2009}, section 5.5.2. 
In particular the set $A$ is $\psi$-communicating in the sense 
that \citep[Definition 2.2 p.11]{nummelin:1984}
\begin{align*}
\forall x\in A,~\forall B~\text{measurable},~\psi(B)>0\quad \Rightarrow\quad  \exists m\geqslant 1, P^{m}(x,B)>0.
\end{align*}
As by (\ref{taupr}) the time to reach $A$ is finite with probability $1$, the chain is 
$\psi$-irreducible, i.e., the whole space $\mathcal E$ is $\psi$-communicating. 
An irreducible Markov chain is called Harris recurrent if
\begin{align*}
\forall B\subset \mathcal E\ \text{such that}\ \psi(B)>0,\ \forall x \in \mathcal E,\quad  \mathbb {P}_x( \{X_n\in B\}\  i.o. ) =1 .
\end{align*} 
A consequence of (\ref{taupr}), is that for all $x \in \mathcal E$, $\mathbb {P}_x( \{X_n\in A\}\  i.o. ) =1$. 
Starting from $A$ and if $\psi(B)>0$, from (\ref{ren1_0}) one can deduce that the chain reaches $B$ with 
positive probability. Consequently, under (\ref{taupr}) and (\ref{ren1_0}), the chain is Harris 
recurrent (see \cite{meyn+t:2009}, Proposition 9.1.7, or \cite{nummelin:1984}, Proposition 4.8). From Theorem 10.0.1 in \cite{meyn+t:2009} (see also Corollary 5.3 (ii) in \cite{nummelin:1984}), 
the chain admits an invariant measure and equation (\ref{tauint}) allows to prove that this measure is finite.

If $m_0=1$ the regeneration theory, detailed below, allows to split the chain into independent 
subsequences. This is obviously of great technical interest as many results can be adapted from the independent setting. 
The case $m_0>1$ is somewhat different and we shall say a few words
about it later.

When $m_0=1$, i.e.,
\begin{align}\label{ren1}
\forall x\in A,~\forall B~\text{measurable},\quad P(x,B)\geqslant \lambda_0\psi(B),
\end{align}
 each time the chain hits $A$, it can be restarted with probability $\lambda_0$ with
the measure $\psi$. It should be noted that this assumption is weaker than the well-known \textit{Doeblin condition}
which requires (\ref{ren1}) to hold for every $x\in \mathcal E$. In order to make these regeneration times stopping times,
the chain has to be extended and redefined as the so-called split chain 
$Z_i=(X_i,Y_i)$, $i=1,2\ldots$ having the following transitions:
\medskip

- generation of $Y_i$ given $X_i$
\begin{align*}
\begin{array}{lll}
X_i\notin A&&\longrightarrow Y_i=0,\\
X_i\in A&&\longrightarrow Y_i\sim\EuScript B(1,\lambda_0), \end{array}
\end{align*}

- and generation of $X_{i+1}$
\begin{align*}
\begin{array}{ll}
X_i\notin A&\longrightarrow ~X_{i+1}\sim P(X_i,dx),\\
X_i\in A,~Y_i=0&\longrightarrow ~X_{i+1}\sim(1-\lambda_0)^{-1}(P(X_i,dx)-\lambda_0\psi(dx)),\\
X_i\in A,~Y_i=1&\longrightarrow ~X_{i+1}\sim\psi(dx).
\end{array}
\end{align*}
It is easily checked that the chain $X_0,X_1,X_2,\ldots$ has the right transition probability, $P$. 
In addition, the set $a=A\times \{1\}$ is now an atom for $Z_0,Z_1,Z_2,\ldots$ in the sense that 
(the transition probability of $Z_0,Z_1,Z_2,\ldots$ is abusively still denoted by $P$)
\begin{align}\label{ren2}
\forall z\in a,~\forall C~\text{measurable},~~~P(z,C)=\Psi(C) ,
\end{align}
where $\Psi$ depends only on the measure $\psi$ and $\lambda_0$\footnote{The measure $\Psi$ is given by 
$P(z,B\times \{1\} ) = \int_{B} \psi(x) \lambda_0  \mathbb{1}_{\{x\in A \}}dx$ and 
$P(z,B\times \{0\} ) =  \int_{B } \psi(x)(1- \lambda_0  \mathbb{1}_{\{x\in A \}})dx $.}. 
In particular, the chain regenerates as soon as it gets in $a$, i.e., whenever $Z_i\in a$, 
the distribution of $Z_{i+1},Z_{i+2},\ldots $ is always the same.
We denote the expectation under this measure as $\mathbb {E}_a$. We also set
\begin{align*}
\theta_a=\inf\{i\geqslant 1: Z_i\in a\}.
\end{align*}
As a consequence of (\ref{taupr}) and (\ref{tauint}) (see Lemma~\ref{lemma:tau} in appendix \ref{s31}), 
\begin{align}\label{taufi1}
&\forall z\in \mathcal E\times \{0,1\},\quad  \mathbb {P}_z(\theta_a <\infty)=1,\\
&\alpha_0 = \mathbb {E}_a[\theta_a ]<\infty.\label{taufi2}
\end{align}
Two essential consequences of (\ref{ren2}), (\ref{taufi1}) and (\ref{taufi2}) are the following.
Let $\theta_a(k)$ stand for the $k$-th hitting time of $a$ ($\theta_a(1)=\theta_a>0$),
then the variables
\begin{align}\label{block}
B_k=(Z_{\theta_a(k)+1},\dots Z_{\theta_a(k+1)}),\quad k\in \mathbb N^*,
\end{align}
form an identically and independently distributed sequence of random variables valued 
in $\bigcup_{i \geqslant 1} \mathbb R^i$. These random variables are called ``blocks''. 
And secondly, the chain has a unique invariant probability $\pi$ and
we have the classical formula \citep[equation (5.7)]{nummelin:1984}, for any bounded function $g$,
\begin{align}\label{bert0}
\mathbb {E}_a\Big[\sum_{i=1}^{\theta_a}  g(Z_i)\Big]=\alpha_0 \pi(g).
\end{align}
Based on this, many properties of independent sequences can be extended to Markov chains. As it is useful in our study, we derive in appendix \ref{s31} a bound on the order-$2$ moments of certain empirical sums over Markov chains satisfying (\ref{taupr}), (\ref{tauint}) and (\ref{ren1}).

\paragraph{Control of the recurrence.} As we see with equation~(\ref{tauint}) 
above, a key point for the application of this theory is 
the control of moments of $\tau_A$. This can be classically done through the following result 
(Theorem 3.6 in \cite{MR1890063}): {\it If there exists a function $V\geqslant 1$ such that 
for all $x\in \mathcal E$
\begin{align}\label{vcon}
\mathbb {E}_x[V(X_1)]\leqslant V(x)-cV(x)^{1-\frac1{p}}+c^{-1}\mathbb{1}_{A}(x)
\end{align}
with $c>0$, then for some $c'>0$, for all $x\in \mathcal E$
\begin{align*}
\mathbb {E}_x[\tau_A^{p}]\leqslant c'V(x).
\end{align*}}\paragraph{The case $m_0>1$.}  
Consider for example the chain $X_i=(A_i,B_i)$, $i\in \mathbb N$, with the following transition:
given $X_{i-1}$, draw $U_i\sim\EuScript B(1,1/2)$, $A'_i,B'_i\sim \EuScript N(0,1)$
and set $X_i=(A'_i,B_{i-1})$ if $U_i=0$, and otherwise $X_i=(A_{i-1},B'_i)$.
Then $(X_i)_{i\in \mathbb N}$ does not satisfy (\ref{ren1_0}) with $m_0=1$, but with $m_0=2$. This may induce serious 
complications since the block theory actually fails for the chain $(X_i)_{i\in \mathbb N}$.

However, for $k=0,\dots, m_0-1$, the chain $(X_{im_0+k})_{i\in \mathbb N} $, satisfies  (\ref{ren1}).
Consequently, some properties when $m_0>1$ might be directly deduced from the case $m_0=1$, e.g., for obtaining bounds on empirical sums.
%

\section{Convergence of density estimators}\label{s3}

This section includes some results on kernel estimators of the density of the invariant measure associated to a Markov chain. We start by giving approximation results in $L_p$-spaces and then we consider the question of uniform convergence with the help of empirical process theory.

As the proofs of certain results are technical their proofs are postponed in appendix \ref{s32}. 


\subsection{Approximation in $L_p$-spaces}

We denote by $\lfloor s \rfloor  $ the greater integer smaller than $s$, e.g., $\lfloor 3\rfloor=2$. Following \cite{tsybakov}, we define the Nikolski class
of functions $\EuScript H_q(s,M)$ of regularity $s$ with constant $M>0$ and order $q\geqslant 1$, 
as the set of bounded by $M$ and $\lfloor s\rfloor $-times differentiable functions
$\psi$ whose derivatives of order $\lfloor s\rfloor$ satisfy, for every $u\in \mathbb R^d$,
\begin{align*}
\int \big|\psi^{(l)}(x+u)-\psi^{(l)}(x)\big|^qdx\leqslant M^q|u|_1^{q(s-\lfloor s\rfloor)},\qquad l=(l_1,\dots , l_d)\in\mathbb N^d, \qquad \sum_{i=1}^d l_i\leqslant \lfloor s\rfloor,
\end{align*}
where $\psi^{(l)} = \partial _{x_1}^{l_1}\ldots \partial _{x_d}^{l_d}\psi$ and $|\cdot |_1$ stands for the $\ell _1$-norm. Notice that $0<s-\lfloor s\rfloor\leqslant 1$. When $s<1$, the Nikolski class contains discontinuous functions whereas the more classical H\"older regularity class does not \citep[Lemma 9]{delyon2015}. As a result, the Nikolski class is too large to guarantee pointwise convergence of kernel density estimators. It still ensures convergence in $L_q(\pi)$-norm which is enough for our purpose. While the usual definition of the Nikolski class is with $q=2$, considering different values of $q$ helps when treating the bias of the density estimator along the blocks of the chain.
  
We say that $K$ is a kernel with order $p\in\mathbb N^*$ whenever $K:\mathbb R^d\rightarrow \mathbb R$ is symmetric about $0$, bounded and satisfies 
\begin{align*}
&\int K(x)dx =1,\qquad \int x ^l K(x) dx =0,\ \ l=(l_1,\dots , l_d), ~~0<\sum_{i=1}^d l_i\leqslant p-1,
\end{align*}
with the notation $x^l =(x_1^{l_1},\ldots , x_d^{l_d})$. 

For every $h>0$, we introduce the notation 
\begin{align*}
K_h(\cdot)=h^{-d}K(\cdot/h).
\end{align*}
For any other function $\psi:\mathbb R^d \rightarrow \mathbb R$, the convolution between $\psi$ and $K_h$ is given by
\begin{align*}
\psi_h(x) =(\psi \ast K_h)(x)  =  \int \psi (x-hu)K(u)   du.
\end{align*}
The following Lemma asserts that for kernels with sufficiently high order, the larger the Nikolski regularity of $\psi$ and $\pi$ the better the rate of convergence of $\psi_h$ to $\psi$ in $L_q(\pi)$-norm. For any bounded real-valued function $g$ defined on some space $\mathcal X$, we set 
\begin{align*}
g_{\infty} = \sup_{x\in \mathcal X} |g(x)|. 
\end{align*}

\begin{lemma}\label{lemma:regularizationrates}
Let $s>0$, $q\geqslant 1$ and suppose that $K$ has order (strictly) greater than $\lfloor s \rfloor$ such that $\int |u|_1^s|K(u)|du<+\infty$ and $\psi:\mathbb R^d \rightarrow \mathbb R $ belongs to $\EuScript H_q(s,M_1)$, then for any bounded density $\pi$ on $\mathbb R^d$, and every $h>0$,
\begin{align}
&\big\| \psi -\psi_{h} \big\|_{L_q(\pi)} \leqslant C_1 M_1 \pi_\infty^{1/q}  h^{s},\label{eq:L2regularization}
\end{align}
where $C_1$ depends on $K$ and $s$. Suppose the previous assumptions hold with $q=1$. Let $r>0$ and assume moreover that $K$ has order (strictly) greater than $\lfloor r \rfloor$ such that $\int |u|_1^r|K(u)|du<+\infty$, $\pi$ belongs to $\EuScript H_1(r,M_2)$ and $\int |\psi(x)|dx <+\infty$, then there exists $C_2>0$ such that, for every $h>0$,
\begin{align}\label{eq:L1regularization}
& |\pi(\psi -\psi_{h})|  \leqslant C_2 (M_1\pi_\infty + M_2 \psi_\infty) h^{r\vee s},
\end{align}
where $ C_2$ depends on $K$, $s$ and $r$.
\end{lemma}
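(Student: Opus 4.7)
The plan is to derive both inequalities from a single ``bias'' calculation for $\psi - \psi_h$: substitute an integral-remainder Taylor expansion of $\psi$ inside the convolution, exploit the cancellation of the first $\lfloor s\rfloor$ moments of $K$ (since $K$ has order $>\lfloor s\rfloor$), and control the surviving remainder via the $L^q$ H\"older-type Nikolski condition on the top-order derivatives. For the second inequality, the additional ingredient is a duality step, available because $K$ is symmetric, that exchanges the roles of $\psi$ and $\pi$.

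For (\ref{eq:L2regularization}), I would start from
\begin{align*}
\psi(x)-\psi_h(x)=\int K(u)\bigl[\psi(x)-\psi(x-hu)\bigr]\,du
\end{align*}
and Taylor-expand $\psi(x-hu)$ at $x$ to order $\lfloor s\rfloor$ with integral remainder. The leading sum $\sum_{|\alpha|\leqslant\lfloor s\rfloor}\frac{(-hu)^\alpha}{\alpha!}\psi^{(\alpha)}(x)$ is annihilated by $\int K(u)du=1$ and by the vanishing moments of $K$, leaving only
\begin{align*}
\lfloor s\rfloor\sum_{|\alpha|=\lfloor s\rfloor}\frac{(-h)^{\lfloor s\rfloor}}{\alpha!}\int K(u)u^{\alpha}\!\int_{0}^{1}(1-t)^{\lfloor s\rfloor-1}\bigl[\psi^{(\alpha)}(x-thu)-\psi^{(\alpha)}(x)\bigr]\,dt\,du.
\end{align*}
Applying Minkowski's integral inequality pushes the $L^q(dx)$-norm in $x$ inside the $u,t$ integrals; the Nikolski condition then yields $\|\psi^{(\alpha)}(\cdot-thu)-\psi^{(\alpha)}\|_{L^q(dx)}\leqslant M_{1}(th|u|_{1})^{s-\lfloor s\rfloor}$, so the $h$-powers combine to $h^{s}$, and the remaining scalar $u$- and $t$-integrals are finite by the hypothesis $\int|u|_{1}^{s}|K(u)|du<\infty$ and the Beta integral $\int_{0}^{1}(1-t)^{\lfloor s\rfloor-1}t^{s-\lfloor s\rfloor}dt$. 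The $L^q(\pi)$-bound then follows from $\|\cdot\|_{L^q(\pi)}\leqslant\pi_{\infty}^{1/q}\|\cdot\|_{L^q(dx)}$.

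For (\ref{eq:L1regularization}), the symmetry of $K$ together with Fubini (justified by $\pi_\infty<\infty$ and $\int|\psi|<\infty$) and the change of variable $y=x-hu$ yield the identity $\int\psi_h\,\pi\,dx=\int\psi\,\pi_h\,dy$, so that
\begin{align*}
\pi(\psi-\psi_h)=\int\psi(x)\bigl(\pi(x)-\pi_h(x)\bigr)\,dx.
\end{align*}
This furnishes two complementary bounds: applying (\ref{eq:L2regularization}) with $q=1$ to $\psi-\psi_h$ gives $|\pi(\psi-\psi_h)|\leqslant C_1 M_1\pi_\infty h^{s}$, whereas bounding $|\int\psi(\pi-\pi_h)|\leqslant\psi_\infty\|\pi-\pi_h\|_{L^{1}(dx)}$ and reapplying the Part 1 argument to $\pi\in\EuScript H_1(r,M_2)$ (which produces $\|\pi-\pi_h\|_{L^{1}(dx)}\leqslant C M_2 h^{r}$) gives $|\pi(\psi-\psi_h)|\leqslant C M_2\psi_\infty h^{r}$. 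Summing the two yields the claim with exponent $h^{r\vee s}$: when $h\leqslant 1$ the smaller exponent dominates and equals $h^{r\vee s}$, and when $h\geqslant 1$ the inequality is trivial after adjusting $C_2$.

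The main delicate point I anticipate is that the Nikolski assumption only provides an $L^q$ (rather than pointwise) H\"older-type control of $\psi^{(\alpha)}$, so the pointwise integral-remainder Taylor formula used above is not literally applicable. I would handle this by first mollifying $\psi$, running the whole argument on the smoothed function, and then passing to the mollification-scale limit using that the Nikolski constant is preserved under convolution with a smooth approximate identity; equivalently, one may keep the entire computation inside $L^q(dx)$ from the outset by starting from the multivariate fundamental theorem of calculus applied along segments $[x-thu,x]$ and exchanging orders of integration by Fubini.
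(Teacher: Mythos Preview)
Your proof is correct and follows essentially the same route as the paper: Taylor expansion with integral remainder, cancellation of the polynomial part via the kernel's vanishing moments, the generalized Minkowski inequality to push the $L_q$-norm inside, and the Nikolski bound for the first inequality; then the Fubini/symmetry identity $\int\pi\,\psi_h=\int\psi\,\pi_h$ together with two complementary applications of the first part for the second. Your final mollification concern is unnecessary here, because the paper's definition of $\EuScript H_q(s,M)$ already requires $\psi$ to be $\lfloor s\rfloor$-times classically differentiable, so the pointwise integral-remainder Taylor formula is directly applicable.
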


\subsection{Uniform concentration}

The considered approach is based on empirical process theory and more precisely on the following result from \cite{einmas}. Given independent and identically distributed 
 random variables $\xi_1, \xi_2,\ldots$, it provides a bound on the expected value of 
\begin{align*}
\sup_{f\in \mathcal F}\Big| \sum_{i=1}^n (f(\xi_i)-\mathbb E [f(\xi_1)] ) \Big|,
\end{align*}
whenever the class of function $\mathcal F$ is a VC class of functions (see Theorem \ref{theorem:gine+g} below). A class $\mathcal F$ is VC whenever there exist $A>0$ and $v>0$ such that, for every probability measure $Q$ satisfying
$\|F\|_{L_2(Q)}<\infty$, and every $0<\epsilon<1$,
\begin{align*}
\mathcal N \left(\mathcal F,\, L_2(Q),\, \epsilon\|F\|_{L_2(Q)}\right)\leqslant \left(\frac{A}{\epsilon}\right)^{v}, 
\end{align*}
where $F$ is an envelope for $\mathcal F$, i.e., for any $f\in \mathcal F$, $|f(x)|\leqslant F(x)$, and $\mathcal N(T,d,\epsilon)$ denotes the $\epsilon$-covering number of the metric space $(T,d)$ \citep{wellner1996}. Many classes of interest turn out to be VC, e.g., polynomials and indicators, and several preservation properties are available (see Proposition \ref{theorem:preservationprop}, \ref{proposition:pollard+nollan} and \ref{corollary:vcclass_example} below). 

The following statement is actually a slight modification of Proposition 1 in \cite{einmas}. Comments are given below.

\begin{theorem}[Einmahl and Mason (2005)] \label{theorem:gine+g}
Let $\xi_1,\dots \xi_n$ be an i.i.d. sequence and $\mathcal F$ be a VC class of functions with envelope $F$ 
and characteristics $(A,v)$ with $A\geqslant e$ and $v\geqslant 1$, and set $\beta^2=\mathbb E[F(\xi_1)^2]$. 
Let $\sigma^2$ be such that
\begin{align}\label{einmasas1}
& \sigma^2\geqslant\sup_{f\in \mathcal F} \mathbb E[f(\xi_1)^2]),\\
&\sigma^2\geqslant 16vn^{-1}\log\big(A(\tfrac{\beta}{\sigma}\vee 1)\big)\sup_{f\in \mathcal F,\, x\in \mathcal X} f(x)^2,\label{einmasas2}
\end{align}
then
\begin{align}
\mathbb E  \sup_{f\in \mathcal F} \Big|  \sum_{i=1} ^n (f(\xi_i)- \mathbb E[f(\xi_1)]) \Big| \leqslant 
C_0\sqrt {vn\sigma^2 \log\big(A(\tfrac{\beta}{\sigma}\vee 1)\big)},\label{einmaseq}
\end{align}  
where $C_0$ is a universal constant.
\end{theorem}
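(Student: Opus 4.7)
The plan is to follow the classical symmetrization--chaining route and then to trade the random empirical quantities arising at the end of chaining for their deterministic counterparts via a self-bounding argument; the calibration hypothesis (\ref{einmasas2}) plays the decisive role in this last step.

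First, introducing i.i.d. Rademacher signs $\epsilon_1,\dots,\epsilon_n$ independent of the $\xi_i$'s, the standard symmetrization inequality gives
\begin{align*}
\mathbb E\sup_{f\in\mathcal F}\Big|\sum_{i=1}^n(f(\xi_i)-\mathbb E[f(\xi_1)])\Big|\leqslant 2\,\mathbb E\sup_{f\in\mathcal F}\Big|\sum_{i=1}^n \epsilon_i f(\xi_i)\Big|.
\end{align*}
Conditionally on the data, the Rademacher process $f\mapsto\sum_i\epsilon_i f(\xi_i)$ is sub-Gaussian for the pseudo-metric $\sqrt{n}\,\|f-g\|_{L_2(\mathbb P_n)}$, where $\mathbb P_n=n^{-1}\sum_i\delta_{\xi_i}$. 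Writing $F_n:=\|F\|_{L_2(\mathbb P_n)}$ and $S_n:=\sup_{f\in\mathcal F}\|f\|_{L_2(\mathbb P_n)}$, Dudley's entropy integral combined with the VC bound $\mathcal N(\mathcal F,L_2(\mathbb P_n),u)\leqslant(AF_n/u)^v$ and the elementary estimate $\int_0^1\sqrt{\log(c/t)}\,dt=O(\sqrt{\log c})$ as $c\to\infty$ yields
\begin{align*}
\mathbb E_\epsilon\sup_{f\in\mathcal F}\Big|\sum_{i=1}^n\epsilon_i f(\xi_i)\Big|\leqslant C\sqrt{n}\,S_n\sqrt{v\log\big(A(F_n/S_n\vee 1)\big)}.
\end{align*}

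The crux is to replace $S_n$ and $F_n$ by the deterministic $\sigma$ and $\beta$. The identity $\mathbb E[F_n^2]=\beta^2$ handles the envelope trivially, so the difficulty concentrates on $S_n$. I would start from
\begin{align*}
S_n^2\leqslant\sigma^2+\sup_{f\in\mathcal F}\Big|n^{-1}\sum_{i=1}^n\big(f(\xi_i)^2-\mathbb E[f(\xi_1)^2]\big)\Big|,
\end{align*}
symmetrize the right-hand supremum, and apply the Ledoux--Talagrand contraction principle to the $1$-Lipschitz map $x\mapsto x^2/(2U)$, with $U:=\sup_{f\in\mathcal F,\,x\in\mathcal X}|f(x)|$, in order to reduce squared functions to linear ones at the cost of a factor of order $U$. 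Taking expectation in the data and moving it inside the logarithm and $S_n$ factors via Jensen's inequality (both functions being concave in the relevant range), one is led to a \emph{self-bounding} inequality
\begin{align*}
Z\leqslant C_1\sqrt{nv\sigma^2 L}+C_2\sqrt{vU^2 L\cdot Z},\qquad L:=\log\big(A(\beta/\sigma\vee 1)\big),
\end{align*}
where $Z$ denotes the left-hand side of (\ref{einmaseq}). Solving this quadratic inequality in $\sqrt{Z}$ and invoking (\ref{einmasas2})---which is exactly the calibration ensuring $16vU^2L\leqslant n\sigma^2$, hence forcing the nonlinear term to be absorbed into the leading one---produces the announced bound.

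The main obstacle is this self-bounding step: one has to push the squared empirical quantities cleanly through symmetrization and the Ledoux--Talagrand contraction so that the residual random terms can indeed be absorbed under the threshold imposed by (\ref{einmasas2}); the constant $16$ in (\ref{einmasas2}) is not a coincidence but the exact value needed to close the fixed-point argument. Assumption (\ref{einmasas1}) only controls the $L_2$ moment of each individual $f$, whereas (\ref{einmasas2}) is precisely what permits the expected empirical $L_2$ norm $\mathbb E[S_n^2]$ to be dominated by the deterministic $\sigma^2$. The remaining ingredients---symmetrization, Dudley chaining, the VC entropy estimate, and any measurability technicalities for the supremum---are standard and can be imported from \cite{einmas} essentially verbatim.
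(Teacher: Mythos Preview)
Your sketch is a reasonable outline of how the Einmahl--Mason bound is actually established (symmetrization, Dudley chaining under the VC entropy estimate, Ledoux--Talagrand contraction on squares, and a self-bounding quadratic in $\sqrt Z$). But you are doing far more work than the paper does. The paper does not reprove this inequality: it simply invokes Proposition~1 of \cite{einmas} as a black box and records two short adjustments. First, the original is stated for the Rademacher process $\sum_i\epsilon_i f(\xi_i)$, so one applies the symmetrization lemma to pass to the centered sum. Second, the original assumes $\sigma\leqslant\beta$; the paper observes that when $\sigma>\beta$ one may enlarge the envelope to $a\vee F$ so that the new $\beta$ equals $\sigma$ (the VC characteristics $(A,v)$ are unchanged), and then apply the result in the already-covered regime. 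That is the entirety of the argument.

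Your approach buys a self-contained proof at the cost of redoing the contraction/self-bounding machinery; the paper's approach buys brevity by outsourcing everything to \cite{einmas}. If you keep your version, two small cautions: the step where you ``move the expectation inside the logarithm and $S_n$ factors via Jensen'' is not literally Jensen on a product of two random quantities and needs the more careful treatment found in the original references; and you do not address the case $\sigma>\beta$, which is exactly the second adjustment the paper isolates.
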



In  \cite{einmas} the left hand side is actually a Rademacher sum, but then (\ref{einmaseq}) follows from
the Symmetrization Lemma, e.g., Lemma~2.3.1 in \cite{wellner1996}. Another difference is that it is
stated only in the case $\sigma\leqslant\beta$. But if
$\sigma\geqslant\beta$, one can increase $F$, e.g., $F\rightarrow a\vee F$, in such a way that
$\beta$ will be equal to $\sigma$ ($A$ and $v$ do not change) and apply the previous result;
this leads to (\ref{einmaseq}).

Preservation properties of the covering number's size will be useful in the sequel to show that some classes are VC. 
The following proposition asserts that locally Lipschitz transformations of VC classes are still VC. This result is a slight variation of Theorem 2.10.20 in \cite{wellner1996} in which the authors consider uniform entropy numbers with respect to discretely finite probability measures.

\begin{proposition}\label{theorem:preservationprop}
Let $ \mathcal F_1, \ldots  \mathcal F_d$ be VC classes of functions defined on a common space $\mathcal X$ such that 
each $f\in\mathcal F_j$ is valued in the set $I_j\subset \mathbb R$ and $\mathcal F_j$ has envelope $F_j$. 
Let $\Psi: I_1\times \ldots \times I_d\rightarrow \mathbb R$ be such that
for any $A=(A_1,\dots A_d)\in \mathbb R_+^d$
\begin{align}\label{def:locally_lipschitz}
&|\Psi(z) - \Psi(\widetilde z)| \leqslant \sum_{j=1}^d C_j( A) |z_j-\widetilde z_j|
 ,\qquad \forall z,\widetilde z\in 
 \big([-A_1,A_1]\cap I_1\big)\times \ldots \times \big([-A_d,A_d]\cap I_d\big),
\end{align} 
where $C_j:\mathbb R^d \rightarrow \mathbb R$, $j=1,\ldots d$, are non-negative functions. 
Let $\mathcal G$ denote the class of functions $x\mapsto \Psi(f_1(x),\ldots f_d(x) )$ 
when $(f_1,\ldots f_d) $ ranges over $ \mathcal F_1\times \ldots \times  \mathcal F_d$.  
The class $\mathcal G$ is a VC class of functions with envelope 
\begin{align*}
&G=|\Psi (f_0)| +2 \sum_{j=1}^d  (1\vee F_j){C_j(F) } ,
\end{align*}
where $F=(F_1,\ldots F_d)$, and $f_0$ is an arbitrary function in $ \mathcal F_1\times \ldots \times  \mathcal F_d$.
\end{proposition}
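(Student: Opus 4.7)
The plan is to verify that $G$ envelopes $\mathcal G$ directly from the local Lipschitz condition, and then to bound the uniform covering numbers of $\mathcal G$ by pulling back the VC entropies of each $\mathcal F_j$ through a suitable reweighting of the ambient measure. For the envelope, fix $x\in \mathcal X$ and $f=(f_1,\ldots,f_d)\in\mathcal F_1\times\ldots\times\mathcal F_d$. Since $|f_j(x)|,|f_0^{(j)}(x)|\leqslant F_j(x)$, both $f(x)$ and $f_0(x)$ belong to $\prod_j\big([-F_j(x),F_j(x)]\cap I_j\big)$, so (\ref{def:locally_lipschitz}) applied with $A=F(x)$ yields $|\Psi(f(x))-\Psi(f_0(x))|\leqslant 2\sum_j F_j(x) C_j(F(x))\leqslant 2\sum_j (1\vee F_j(x))C_j(F(x))$, and the triangle inequality gives $|\Psi(f(x))|\leqslant G(x)$.

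Next, let $(\alpha,v)$ denote common VC characteristics of the $\mathcal F_j$ and fix an arbitrary probability measure $Q$ with $\|G\|_{L_2(Q)}<\infty$. Introduce the finite measures $d\mu_j=C_j(F)^2\,dQ$, $M_j=\mu_j(\mathcal X)$, and the probability measures $\widetilde Q_j=M_j^{-1}\mu_j$ when $M_j>0$; the indices $j$ with $M_j=0$ force $C_j(F)=0$ $Q$-a.e.\ and contribute nothing below, so they can be dropped. Applying (\ref{def:locally_lipschitz}) pointwise, for any $f,g\in\prod_j\mathcal F_j$,
\begin{align*}
\|\Psi(f)-\Psi(g)\|_{L_2(Q)}\leqslant\sum_{j=1}^d\|C_j(F)(f_j-g_j)\|_{L_2(Q)}=\sum_{j=1}^d M_j^{1/2}\|f_j-g_j\|_{L_2(\widetilde Q_j)},
\end{align*}
so a product of $\epsilon_j$-nets of $\mathcal F_j$ in $L_2(\widetilde Q_j)$ (of cardinalities $N_j$) yields a net of $\mathcal G$ in $L_2(Q)$ of radius $\sum_j M_j^{1/2}\epsilon_j$ and cardinality $\prod_j N_j$. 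The VC assumption gives $N_j\leqslant(\alpha/u)^v$ at radius $u\|F_j\|_{L_2(\widetilde Q_j)}$; taking $u=2\epsilon/d$ and using $2C_j(F)F_j\leqslant G$ (hence $\|C_j(F)F_j\|_{L_2(Q)}\leqslant\tfrac{1}{2}\|G\|_{L_2(Q)}$), the total radius is at most $\tfrac{2\epsilon}{d}\sum_j\|C_j(F)F_j\|_{L_2(Q)}\leqslant\epsilon\|G\|_{L_2(Q)}$, whence
\begin{align*}
\mathcal N\big(\mathcal G,L_2(Q),\epsilon\|G\|_{L_2(Q)}\big)\leqslant\Big(\tfrac{\alpha d}{2\epsilon}\Big)^{dv},
\end{align*}
uniformly in $Q$, providing VC characteristics $(\alpha d/2,dv)$.

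The main obstacle is the bookkeeping that links the entropy of $\mathcal F_j$, naturally expressed relative to its envelope $\|F_j\|_{L_2(\widetilde Q_j)}$, to the universal scale $\|G\|_{L_2(Q)}$ while remaining uniform in $Q$; in particular one must respect the fact that the weighted measures $\widetilde Q_j$ vary both with $Q$ and with $j$. The $(1\vee F_j)$ factor in $G$ is a convenient safety buffer that makes the inequality $2C_j(F)F_j\leqslant G$ robust, and the degenerate case $M_j=0$ (as well as large values of $\epsilon$, where the bound becomes trivial after possibly enlarging $\alpha$) are handled as noted. With these ingredients in place, the claim that $\mathcal G$ is VC with the stated envelope is immediate.
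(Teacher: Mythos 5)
Your proof is correct and follows essentially the same route as the paper: verify the envelope via \eqref{def:locally_lipschitz} with $A=F(x)$, reweight $Q$ by densities proportional to $C_j(F)^2$, bound $\|\Psi(f)-\Psi(g)\|_{L_2(Q)}$ termwise using the pointwise Lipschitz estimate, and take a product of nets. The only (inessential) deviation is in the final aggregation: you bound each $\|C_j(F)F_j\|_{L_2(Q)}\leqslant\tfrac12\|G\|_{L_2(Q)}$ directly from the pointwise inequality $2C_j(F)F_j\leqslant G$ and split the radius budget evenly over $j$, whereas the paper bounds $\sum_j q_j^2\|F_j\|_{L_2(Q_j)}^2\leqslant\|G\|_{L_2(Q)}^2$ and then invokes Cauchy--Schwarz, yielding a radius $\epsilon d^{1/2}\|G\|_{L_2(Q)}$ instead of your $\epsilon\|G\|_{L_2(Q)}$; both give valid uniform polynomial covering bounds.
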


The following proposition, which includes a result from \cite{nolan+p:1987}, provides interesting examples of  uniformly bounded VC classes of functions. 
We shall consider a kernel function $K:\mathbb R^d \rightarrow \mathbb R$ 
that takes one of the two following forms,
 \begin{align}\label{assump:kernel}
  (i) \quad K(x) =  K^{(0)} (|x|), \qquad \text{or} \qquad (ii) \quad  K(x) = \prod_{k=1}^d  K^{(0)}(x_k),
 \end{align}
where  $K^{(0)} $ a bounded real function of bounded variation. We denote by $K_\infty$ the supremum of $K$.

\begin{proposition}\label{proposition:pollard+nollan}
The class of functions $\{ x\mapsto \mathbb{1}_{x\leqslant M}\,:\,M\in\mathbb R \}$ is a uniformly bounded VC class of functions. Assume that (\ref{assump:kernel}) holds. The class of functions $\{ x\mapsto K ( h^{-1}(y-x))\, : \, y\in \mathbb R^d,\, h>0 \}$ is a uniformly bounded VC class of functions.
\end{proposition}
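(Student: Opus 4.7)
The first statement is classical: the family of indicators of left-closed half-lines in $\mathbb R$ has VC subgraph dimension equal to one (a single point is shattered, whereas no two points can be since the indicator is monotone in the threshold $M$). Haussler's polynomial bound on uniform $L_2$-covering numbers for VC-subgraph classes (Theorem 2.6.7 in \cite{wellner1996}) applied with the constant envelope $F\equiv 1$ yields an $(A/\epsilon)^v$ uniform bound for suitable constants $A\geqslant e$, $v\geqslant 1$.

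For the kernel class, the plan is to reduce everything to monotone kernels. The bounded variation of $K^{(0)}$ allows writing $K^{(0)}=g_+-g_-$, where $g_\pm$ are bounded non-decreasing functions. Since $\Psi(u,v)=u-v$ is globally $1$-Lipschitz, Proposition \ref{theorem:preservationprop} reduces the task to showing that, for each bounded non-decreasing $g$, the classes $\mathcal G_g^{(i)}=\{x\mapsto g(|y-x|/h)\,:\,y\in\mathbb R^d,\,h>0\}$ (for radial kernels) and the one-dimensional analogue $\mathcal G_g^{(ii)}=\{x\mapsto g((y-x)/h)\,:\,y\in\mathbb R,\,h>0\}$ are VC. Once the latter is obtained, the product form (ii) is handled by applying Proposition \ref{theorem:preservationprop} coordinate-wise to $\Psi(z_1,\dots,z_d)=\prod_{k=1}^d z_k$, which is Lipschitz on the bounded box $[-K^{(0)}_\infty,K^{(0)}_\infty]^d$.

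The key step is to prove that the subgraphs of $\mathcal G_g^{(i)}$ and $\mathcal G_g^{(ii)}$ are VC classes of sets, by means of a polynomial linearization. Let $a(t)=\inf\{s:g(s)>t\}$ be the right-continuous generalized inverse of $g$. In the radial case, modulo sign handling treated separately,
\begin{align*}
\{(x,t) : g(|y-x|/h)>t\} = \{(x,t) : |x|^2 - 2\langle y,x\rangle + |y|^2 - h^2 a(t)^2 > 0\}.
\end{align*}
Setting $\phi(x,t)=(x,|x|^2,a(t)^2)$, each subgraph is the preimage under the fixed map $\phi$ of the half-space $\{(v,u,w)\in\mathbb R^{d+2} : u-2\langle y,v\rangle+|y|^2-h^2w>0\}$. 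Half-spaces in $\mathbb R^{d+2}$ form a VC class of sets of dimension $d+3$, and taking preimages under a fixed map cannot increase the VC dimension, so the subgraphs of $\mathcal G_g^{(i)}$ form a VC class. The one-dimensional case is analogous with the simpler embedding $\phi(x,t)=(x,a(t))$ reducing the subgraphs to preimages of half-planes in $\mathbb R^2$. Theorem 2.6.7 in \cite{wellner1996} then yields polynomial uniform covering numbers with envelope $g_\infty$.

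The main obstacle is precisely this polynomial linearization: recognizing that the curved level surfaces (Euclidean spheres in the radial case, monotone graphs in the 1D case) straighten into affine hyperplanes through a fixed quadratic embedding, reducing the problem to the classical VC property of half-spaces. A secondary technicality is the sign handling in the radial case when $a(t)<0$ or $a$ is multi-valued at jumps of $g$; this splits the subgraph into a finite union of sets from VC families, which remains VC by standard preservation.
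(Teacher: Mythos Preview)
Your proof is correct and follows essentially the same route as the paper. The paper simply cites \cite{wellner1996}, Example~2.5.4 for the first statement and Lemma~22 of \cite{nolan+p:1987} for the second (together with Proposition~\ref{theorem:preservationprop} for the product form (ii)); your argument unpacks the content of that Nolan--Pollard lemma, namely the bounded-variation decomposition into monotone pieces followed by the quadratic embedding that turns subgraphs into preimages of half-spaces, and then uses Proposition~\ref{theorem:preservationprop} in the same way for the product kernel.
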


By applying Proposition \ref{theorem:preservationprop} to the VC classes of the previous proposition, we establish the VC property for some class of functions which will be of great interest in the sequel.

\begin{proposition}\label{corollary:vcclass_example}
Assume that (\ref{assump:kernel}) holds. The class of functions  
\begin{align}\label{grosvc}
\big\{\, (t,x) \mapsto t \mathbb{1}_{t\leqslant M} K ( h^{-1}(y-x))\, : \, y\in \mathbb R^d,\, h>0 ,\, M\in\mathbb R \,\big\},
\end{align}
defined on $\mathbb R\times \mathbb R^d$ is a VC class of functions 
with envelope $(t,x)\mapsto 2 ((1\vee K_\infty  ) |t| +  (1\vee |t|) K_\infty )$.
\end{proposition}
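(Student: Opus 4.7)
The plan is to realize the class in (\ref{grosvc}) as the image of three elementary VC classes under the locally Lipschitz product map $\Psi(z_1,z_2,z_3) = z_1 z_2 z_3$, so that Proposition~\ref{theorem:preservationprop} delivers both the VC property and an envelope of the required form. On the common space $\mathbb R \times \mathbb R^d$ I would work with the singleton $\mathcal F_1 = \{(t,x)\mapsto t\}$ (trivially VC, envelope $F_1(t,x) = |t|$); the indicator class $\mathcal F_2 = \{(t,x)\mapsto \mathbb{1}_{t\leqslant M} : M\in\mathbb R\}$ (envelope $F_2 \equiv 1$); and the kernel class $\mathcal F_3 = \{(t,x)\mapsto K(h^{-1}(y-x)) : y\in\mathbb R^d,\, h>0\}$ (envelope $F_3 \equiv K_\infty$). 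That $\mathcal F_2$ and $\mathcal F_3$ are VC on the product space follows from Proposition~\ref{proposition:pollard+nollan} applied to their univariate versions, together with the elementary fact that $L_2(Q)$-covering numbers depend only on the marginal of $Q$ in the coordinate(s) the functions actually use, so the trivial lifting to $\mathbb R \times \mathbb R^d$ preserves the characteristics $(A,v)$.

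Next, since (\ref{grosvc}) is precisely $\{\Psi(f_1,f_2,f_3) : f_j \in \mathcal F_j\}$, I would verify the Lipschitz hypothesis (\ref{def:locally_lipschitz}) by the standard telescoping
\begin{align*}
|z_1 z_2 z_3 - \widetilde z_1 \widetilde z_2 \widetilde z_3| \leqslant A_2 A_3 |z_1 - \widetilde z_1| + A_1 A_3 |z_2 - \widetilde z_2| + A_1 A_2 |z_3 - \widetilde z_3|,
\end{align*}
which is valid on $\prod_{j=1}^3 [-A_j,A_j]$ and identifies $C_1(A)=A_2A_3$, $C_2(A)=A_1A_3$, $C_3(A)=A_1A_2$. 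Proposition~\ref{theorem:preservationprop} then immediately yields that (\ref{grosvc}) is VC. Substituting $F_1 = |t|$, $F_2 = 1$, $F_3 = K_\infty$ into the envelope formula of that proposition, the three terms $(1\vee F_j)C_j(F)$ collapse to $(1\vee|t|)K_\infty$, $|t|K_\infty$, and $(1\vee K_\infty)|t|$ respectively, and the leading $|\Psi(f_0)|$ is pointwise bounded by $|t|K_\infty$. All of these are dominated by the claimed envelope $2\bigl((1\vee K_\infty)|t| + (1\vee|t|)K_\infty\bigr)$ (up to absorbing $|t|K_\infty$ into $(1\vee|t|)K_\infty$), and in any case the trivial bound $|t\mathbb{1}_{t\leqslant M}K(h^{-1}(y-x))| \leqslant |t|K_\infty$ is already an envelope for (\ref{grosvc}), so the strictly larger function in the statement qualifies.

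I do not expect any real obstacle: the argument is a direct composition of the two building-block results Proposition~\ref{proposition:pollard+nollan} and Proposition~\ref{theorem:preservationprop}. The only bookkeeping point worth spelling out is the lifting of the VC property from $\mathbb R$ or $\mathbb R^d$ to the full product space $\mathbb R\times\mathbb R^d$, which as noted above is automatic from the marginalization of $L_2(Q)$ and preserves the VC characteristics.
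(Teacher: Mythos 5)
Your argument is correct in substance and rests on the same two building blocks the paper uses, Propositions~\ref{proposition:pollard+nollan} and~\ref{theorem:preservationprop}. The difference is purely structural: you apply Proposition~\ref{theorem:preservationprop} \emph{once}, with the trivariate map $\Psi(z_1,z_2,z_3)=z_1z_2z_3$ and the telescoping Lipschitz bound, whereas the paper applies it \emph{twice} sequentially, first combining $\{\mathbb{1}_{t\leqslant M}\}$ with the kernel class to obtain a uniformly bounded VC class, and only then multiplying by $\{(t,x)\mapsto t\}$. The one-shot version is slightly more economical; the two-step version has the minor advantage that after step one the class is uniformly bounded, so the intermediate envelope is a constant and the bookkeeping in step two is a little cleaner.

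One point of the proposal is loose and worth tightening. The envelope produced by Proposition~\ref{theorem:preservationprop}, namely
\begin{align*}
G = |\Psi(f_0)| + 2\bigl[(1\vee|t|)K_\infty + |t|K_\infty + (1\vee K_\infty)|t|\bigr],
\end{align*}
is not pointwise below the claimed envelope $G^\star = 2\bigl((1\vee K_\infty)|t|+(1\vee|t|)K_\infty\bigr)$, so you cannot pass from $G$ to $G^\star$ merely because $G^\star$ dominates the crude pointwise bound $|t|K_\infty$. Being a pointwise envelope is not by itself enough: the VC property is the covering-number bound \emph{relative to a given envelope}, and shrinking the reference envelope shrinks the covering radius. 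What does work (and is what is implicitly needed here and in the paper's own informal computation, which also drops $|\Psi(f_0)|$) is that $G$ and $G^\star$ are comparable up to a universal multiplicative constant, e.g.\ $G\leqslant \tfrac52 G^\star$ using $|t|\leqslant 1\vee|t|$ and $|\Psi(f_0)|\leqslant |t|K_\infty$; replacing an envelope by a larger pointwise function, or by a function comparable within a fixed ratio, preserves the VC property with a correspondingly adjusted constant $A$. Stating that comparability explicitly closes the gap.

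The lifting remark you make (covering numbers in $L_2(Q)$ depend only on the relevant marginal of $Q$, so VC on $\mathbb R$ or $\mathbb R^d$ transfers to VC on $\mathbb R\times\mathbb R^d$) is correct and is the same implicit step the paper takes when it views $\mathcal F_1$ and $\mathcal F_2$ as classes on the product space.
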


Based on Proposition \ref{proposition:pollard+nollan}, if the random variables $X_1,X_2,\ldots$ 
used in the construction of $\widehat \pi$, were independent, 
then we would have, under the assumptions of Theorem \ref{theorem:gine+g} and Proposition \ref{proposition:pollard+nollan}, that 
\begin{align*}
\sup_ {y\in \mathbb R} | \widehat  \pi(y)-\pi_{h_n}(y)|  = O_{\mathbb P}\left (\sqrt{ \frac{\log n}{nh_n^{d}}} \right),
\end{align*}
whenever $h_n\rightarrow 0$ and $ {nh_n^{d}}/\log (n) \rightarrow +\infty $, as $n\rightarrow +\infty$. For Markov chains, we require the stronger condition on the sequence of bandwidth,
\begin{align}\label{nhdp}
 h_n\rightarrow 0, \qquad nh_n^{dp_0/(p_0-1)}/\log (n)\rightarrow +\infty,
 \end{align}
for some $p_0>2$ such that
\begin{align}\label{c0tau}
\xi(p_0) = \sup_{x\in A}\mathbb {E}_x[\tau_A^{p_0}]<+\infty.
\end{align}
In addition our approach only permits to obtain the convergence to $0$ in probability, not any sharp bound on the rate of convergence.

\begin{theorem}\label{prop:unif_conv_density}
Let $(X_i)_{i\in \mathbb N }$ be a Markov chain satisfying (\ref{taupr}), (\ref{ren1}) and (\ref{c0tau}) for some $p_0>2$.
Suppose that $K$ satisfies (\ref{assump:kernel}) and that (\ref{nhdp}) holds true
for the same $p_0>2$. If $\pi$ is bounded, and $\int (|K(x)|+K(x)^2)dx<+\infty$, we have
\begin{align*}
\sup_{y\in \mathbb R^d}| \widehat\pi(y)-  \pi_{h_n}(y)| \longrightarrow 0,\quad \text{in $\mathbb P_\pi$-probability}.
\end{align*} 
\end{theorem}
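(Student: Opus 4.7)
The strategy is to reduce the problem to one on independent and identically distributed regeneration blocks via the Nummelin splitting of Section~\ref{s2}, and then to apply the Einmahl--Mason inequality (Theorem~\ref{theorem:gine+g}) to the VC-type class from Proposition~\ref{corollary:vcclass_example} after truncating the overly long blocks. Throughout, I work on the enlarged space carrying the split chain $(Z_i)_{i\ge 0}$; denote by $\theta_a(1)<\theta_a(2)<\cdots$ the successive visits to the atom $a$, by $L_k=\theta_a(k+1)-\theta_a(k)$ the block lengths, and by $S_k(y)=\sum_{i=\theta_a(k)+1}^{\theta_a(k+1)}K_{h_n}(y-X_i)$ the per-block contributions. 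By regeneration these are i.i.d.\ in $k$ under $\mathbb P_a$ with mean $\alpha_0\pi_{h_n}(y)$ thanks to (\ref{bert0}). Set $l_n=\max\{k:\theta_a(k)\le n\}$; the strong law gives $l_n/n\to\alpha_0^{-1}$ a.s.

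\paragraph{Boundary and truncation.} Decompose $n\widehat\pi(y)=R_n^{\mathrm{in}}(y)+\sum_{k=1}^{l_n-1}S_k(y)+R_n^{\mathrm{fin}}(y)$, where the two boundary terms arise from the incomplete blocks before $\theta_a(1)$ and after $\theta_a(l_n)$; each is bounded uniformly in $y$ by $K_\infty h_n^{-d}$ times a random length with finite $p_0$-th moment, using (\ref{c0tau}) and, for the initial segment under $\mathbb P_\pi$, the tail bounds for the initial measure proven in Appendix~\ref{s32}. Since (\ref{nhdp}) forces $n^{-1}h_n^{-d}\to 0$, both contribute $o_{\mathbb P_\pi}(1)$ uniformly in $y$, as does $(l_n\alpha_0 n^{-1}-1)\pi_{h_n}(y)$ because $\pi_{h_n}\le\pi_\infty$. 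Next, set $M_n=(n/\log n)^{1/p_0}$ and split $S_k=S_k^{\le}+S_k^{>}$ according to $\{L_k\le M_n\}$. Markov's inequality combined with (\ref{c0tau}) yields $\mathbb E_a[L_k\mathbb 1_{L_k>M_n}]\le \xi(p_0)M_n^{1-p_0}$, whence the uniform tail contribution is $O_{\mathbb P_\pi}(h_n^{-d}M_n^{1-p_0})$, which vanishes precisely under~(\ref{nhdp}) (since $h_n^{d}M_n^{p_0-1}\asymp nh_n^{dp_0/(p_0-1)}/\log n$ raised to the power $(p_0-1)/p_0$).

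\paragraph{Main term via Einmahl--Mason.} It remains to control $\sup_y|n^{-1}\sum_{k=1}^{l_n-1}(S_k^{\le}(y)-\mathbb E_a S_k^{\le}(y))|$, a statement about i.i.d.\ summands. I apply Theorem~\ref{theorem:gine+g} to the block-indexed class $\mathcal F_n=\{B\mapsto \mathbb 1_{|B|\le M_n}\sum_{z\in B}K_{h_n}(y-z):y\in\mathbb R^d\}$. Its envelope is $F(B)=M_nK_\infty h_n^{-d}\mathbb 1_{|B|\le M_n}$, and the inequality $(S_k^{\le})^2\le L_k\sum_{i\in B_k}K_{h_n}(y-X_i)^2$ together with (\ref{bert0}) and boundedness of $\pi$ gives $\sup_y\mathbb E_a[(S_k^{\le}(y))^2]\le \alpha_0\pi_\infty M_n h_n^{-d}\int K^2$, so one may take $\sigma^2\asymp M_nh_n^{-d}$. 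The VC property of $\mathcal F_n$ follows from Proposition~\ref{corollary:vcclass_example} after writing each block-sum as a sum of at most $M_n$ atoms of the form $\mathbb 1_{t\le M_n}K_{h_n}(y-x)$; the resulting covering numbers grow only polynomially in $M_n$, hence logarithmically in $n$. Checking (\ref{einmasas1})--(\ref{einmasas2}) with these choices, Theorem~\ref{theorem:gine+g} delivers an expected supremum of order $\sqrt{n M_n h_n^{-d}\log n}$; dividing by $n$ produces $O\bigl(\sqrt{M_n h_n^{-d}\log(n)/n}\bigr)$, which is $o(1)$ exactly under~(\ref{nhdp}).

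\paragraph{Main obstacle.} The principal technical point is bounding the VC characteristics of the block-indexed class $\mathcal F_n$ uniformly as $M_n\to\infty$: one must verify that aggregating up to $M_n$ atoms from Proposition~\ref{corollary:vcclass_example} into a block-sum inflates the covering numbers only polynomially in $M_n$, so that the $\log n$ factor in Einmahl--Mason absorbs it. Once this entropy bound is in hand, the calibration $M_n=(n/\log n)^{1/p_0}$ is the unique choice that synchronises the tail truncation error and the concentration error, turning both into the single bandwidth condition~(\ref{nhdp}).
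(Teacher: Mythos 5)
Your plan is close in spirit to the paper's (regeneration blocks, boundary terms, truncation of long blocks, Einmahl--Mason), but it differs at the crucial point and the difference is where the gap lies. You apply Theorem~\ref{theorem:gine+g} directly to the ``block-indexed'' class $\mathcal F_n=\{B\mapsto \mathbb 1_{|B|\le M_n}\sum_{z\in B}K_{h_n}(y-z):y\}$ and assert that its VC characteristics can be controlled because each block-sum ``is a sum of at most $M_n$ atoms'' from the class of Proposition~\ref{corollary:vcclass_example}. That step is not established by the results of section~\ref{s3}: the VC property proved there is for the one-atom class $\{(t,x)\mapsto t\,\mathbb 1_{t\le M}K(h^{-1}(y-x))\}$ on $\mathbb R\times\mathbb R^d$, not for a class living on the variable-length block space $\bigcup_m\mathbb R^{m}$. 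Applying Proposition~\ref{theorem:preservationprop} to the $M_n$-fold sum with the Lipschitz map $\Psi(z_1,\dots,z_{M_n})=\sum z_j$ would give a VC exponent of order $M_n v$, which is not absorbed by the $\log$ factor in (\ref{einmaseq}); the fact that all summands share the same parameter $y$ should in principle prevent that blow-up, but you have not proved a covering-number bound on the block space that is uniform in $M_n$ (and uniform over all discrete probability measures on $\bigcup_m\mathbb R^m$, as required by the VC definition). You explicitly flag this as the ``main obstacle,'' which is a correct diagnosis: this is exactly the step that is missing.

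The paper's proof sidesteps the obstacle with a small but decisive trick. Inside each completed block $B_k$ it introduces an auxiliary uniformly random index $I_k\in\{\theta_a(k)+1,\dots,\theta_a(k+1)\}$, and writes the per-block average as a conditional expectation: $\Delta_k^{-1}\sum_{i\in B_k}K_i(y)=\mathbb E[\widetilde K_k(y)\,|\,\mathscr F_\infty]$ with $\widetilde K_k(y)=K_{I_k}(y)$. The main term then becomes $\mathbb E[Z_n(y)\,|\,\mathscr F_\infty]$ with $Z_n(y)=n^{-1}\sum_{k}(\Delta_k\widetilde K_k(y)-\mathbb E_a[\Delta_1\widetilde K_1(y)])$, and one takes $\sup_y$ before the conditional expectation (Jensen). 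The summands are now genuinely functions of the i.i.d.\ pairs $\xi_k=(\Delta_k,X_{I_k})\in\mathbb R\times\mathbb R^d$, and after the truncation $\Delta_k\mathbb 1_{\Delta_k\le m_n}$ they belong exactly to the VC class (\ref{grosvc}) of Proposition~\ref{corollary:vcclass_example}, with $h_n^{-d}$ factored out. No new entropy argument is needed. This is the key idea your proof is missing.

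Two secondary remarks. First, the summation limit $l_n-1$ is random, so Theorem~\ref{theorem:gine+g} cannot be applied directly to $\sum_{k=1}^{l_n-1}$; the paper handles this by sandwiching $l_n-1$ between deterministic levels $l_n^\pm=\lfloor n(\alpha_0^{-1}\pm\eta_n)\rfloor$ and controlling separately a deterministic-length sum over $\{1,\dots,l_n^0\}$ and a short sum over $\{l_n^-,\dots,l_n^+\}$; the ``strong law'' remark in your proof does not by itself justify this. Second, your truncation level $M_n=(n/\log n)^{1/p_0}$ differs from the paper's $m_n=(nh_n^{-d}/\log n)^{1/(2p_0-1)}$ (the exact balancing point of the tail and concentration errors); your choice does make both contributions $o(1)$ under~(\ref{nhdp}), so this is not an error, just a non-optimal (and somewhat arbitrary) calibration.
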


Working further on the difference between $\pi$ and $ \pi_{h_n}$ leads to the following statement which prevents the estimated density of being too close to $0$.

\begin{corollary}\label{prop:coro:lowerbound_density}
Under the assumptions of Theorem~\ref{prop:unif_conv_density}, suppose that $Q\subset \mathbb R ^d $ is a compact set such that $\pi$ is continuous on $Q$ and $\inf_{y\in Q} \pi(y)\geqslant b>0$. If $K$ has bounded support and if there exists $c>0$ and $h_0>0$ such that for every $x\in Q$, $0<h<h_0$, it holds that $(\mathbb 1_{\{ Q\}} \ast K_h)(x) \geqslant c$, then
\begin{align*}
\mathbb {P}_\pi\left(\inf_{y\in Q}\widehat  \pi(y) \geqslant \frac{cb}{2} \right) \rightarrow 1.
\end{align*}
\end{corollary}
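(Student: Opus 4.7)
The plan is to combine the uniform consistency provided by Theorem~\ref{prop:unif_conv_density} with a deterministic lower bound on the smoothed density $\pi_{h_n}$ over $Q$. Starting from the triangle inequality
$$\widehat\pi(y)\geqslant \pi_{h_n}(y) - \sup_{z\in\mathbb R^d}|\widehat\pi(z)-\pi_{h_n}(z)|,$$
it is enough to establish two facts: (a) $\inf_{y\in Q}\pi_{h_n}(y)\geqslant bc$ for all $n$ sufficiently large, and (b) the uniform deviation on the right is smaller than $bc/2$ with probability tending to $1$.

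For (a), I would exploit that $\pi$ is a density, hence $\pi\geqslant 0$ on $\mathbb R^d$, while by assumption $\pi\geqslant b$ on $Q$. This furnishes the pointwise inequality $\pi(x)\geqslant b\,\mathbb 1_{Q}(x)$ valid on the whole space. Because $K\geqslant 0$ (which is the natural setting in which the hypothesis $(\mathbb 1_{Q}\ast K_h)(x)\geqslant c$ is informative), convolution with $K_{h_n}$ preserves this inequality, so that for every $y\in Q$ and $h_n<h_0$,
$$\pi_{h_n}(y) = (\pi\ast K_{h_n})(y) \geqslant b\,(\mathbb 1_{Q}\ast K_{h_n})(y)\geqslant bc.$$
Since $h_n\to 0$, the condition $h_n<h_0$ holds for all $n$ large enough.

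For (b), the assumptions of Theorem~\ref{prop:unif_conv_density} are contained in those of the corollary (in particular the boundedness of $\pi$ is inherited from its continuity on the compact set $Q$ together with the hypotheses of the underlying theorem), so that
$$\Delta_n := \sup_{y\in\mathbb R^d}|\widehat\pi(y)-\pi_{h_n}(y)|\longrightarrow 0 \quad \text{in } \mathbb P_\pi\text{-probability}.$$
Thus $\mathbb P_\pi(\Delta_n<bc/2)\to 1$, and intersecting with the deterministic event $\{h_n<h_0\}$ (which holds for all $n$ large) yields $\widehat\pi(y)\geqslant bc-bc/2 = bc/2$ uniformly in $y\in Q$, as required.

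The only genuinely delicate point in the argument is step (a): the passage from $\pi\geqslant b\,\mathbb 1_Q$ to the convolution inequality hinges on the sign of $K$, and the role of the technical assumption $(\mathbb 1_{Q}\ast K_h)(x)\geqslant c$ is precisely to encode that $Q$ is regular enough near its boundary for the smoothed indicator to stay uniformly away from zero at scale $h$. The continuity of $\pi$ on $Q$ is used only indirectly, to make the pointwise lower bound $\pi\geqslant b$ on $Q$ meaningful for the convolution; no further approximation of $\pi_{h_n}(y)$ by $\pi(y)$ is needed.
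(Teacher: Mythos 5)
Your decomposition into (a) a deterministic lower bound on $\pi_{h_n}$ over $Q$ and (b) the uniform concentration from Theorem~\ref{prop:unif_conv_density} is exactly the structure of the paper's proof, and step (b) is handled correctly. However, step (a) contains a genuine gap: you derive $\pi_{h_n}(y) \geqslant b\,(\mathbb 1_Q\ast K_{h_n})(y)$ from $\pi\geqslant b\,\mathbb 1_Q$ \emph{assuming $K\geqslant 0$}, but nonnegativity of $K$ is not among the hypotheses, and the paper explicitly works with higher-order kernels that must take negative values (Assumption~(A\ref{ash4}) requires order greater than $r$ and $s$; the surrounding discussion says ``This condition cannot trivially verified as it involves the boundary of $Q$ and the regions where $K<0$''). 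If $K$ changes sign, the implication ``$\phi\geqslant\psi \Rightarrow \phi\ast K_h \geqslant \psi\ast K_h$'' simply fails, so your inequality does not hold. Your parenthetical claim that $K\geqslant 0$ is ``the natural setting in which the hypothesis $(\mathbb 1_Q\ast K_h)(x)\geqslant c$ is informative'' has it backwards: the hypothesis is interesting precisely because $K$ may be negative in places.

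The missing idea, which the paper uses, is to exploit the compact support of $K$ and the \emph{uniform} continuity of $\pi$ on the compact $Q$. Writing $K=K_+ + K_-$ and introducing the local oscillation $b(x,h)=\inf_{y\in Q,\,|y-x|\leqslant h}\pi(y)$ and $M(x,h)=\sup_{y\in Q,\,|y-x|\leqslant h}\pi(y)$, one bounds the contributions of $K_+$ and $K_-$ separately by $b(x,h)$ and $M(x,h)$ respectively, obtaining
$\pi_h(x)\geqslant b\,(\mathbb 1_Q\ast K_h)(x) - \sup_{x\in Q}\bigl(M(x,h)-b(x,h)\bigr)$,
and then Heine's theorem gives $\sup_{x\in Q}\bigl(M(x,h)-b(x,h)\bigr)\to 0$ as $h\to 0$. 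This is where continuity of $\pi$ on $Q$ and bounded support of $K$ are genuinely needed; your remark that continuity ``is used only indirectly'' and that ``no further approximation of $\pi_{h_n}(y)$ by $\pi(y)$ is needed'' signals that this step has been overlooked. With this local-oscillation argument in place, you reach $\inf_{y\in Q}\pi_{h_n}(y)\geqslant bc-\epsilon$ for any $\epsilon>0$ and $n$ large, and your step (b) then finishes the proof as you wrote it.
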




\section{Main result}\label{s4}

We now provide the rate of convergence of the estimator $\widehat I_{\text{ks}}$ of $I_0$. We rely largely on the regenerative framework described in the previous section. In particular, the following set of assumptions ensures the statements of Theorem \ref{prop:unif_conv_density} and Corollary \ref{prop:coro:lowerbound_density}.

\begin{enumerate}[(\text{A}1)]
\item \label{ash1} For some $s>0$ and $M_1>0$, the support of $\varphi$ is a compact set $Q\subset \mathbb R^d$ and $\varphi$ belongs to $\EuScript H_q(s,M_1)$ for any $q\geqslant 1$.
\item \label{ash2} For some $r>0$ and $M_2>0$, $\pi$ is continuous, bounded on $Q$ and belongs to $\EuScript H_q(r,M_2)$ for any $q\geqslant 1$. Moreover, there exists $b>0$ such that $\inf_{y\in Q} \pi(y)\geqslant b$.
\item \label{ash4} Let $K$ be a kernel satisfying (\ref{assump:kernel}) with order (strictly) greater than $r$ and $s$. There exists $c>0$ and $h_0>0$ such that for every $x\in Q$ and $0 < h<h_0$,
\begin{align*}
(\mathbb 1_{\{ Q\}} \ast K_h)(x) \geqslant c.
\end{align*}
\item \label{ash6:bandwidth} Let $(X_i)_{i\in \mathbb N}$ be a Markov chain satisfying (\ref{taupr}) and (\ref{ren1}) and initial measure $\nu$ absolutely continuous with respect to $\pi$. There exists $p_0>3$ such that 
\begin{align*}
\sup_{x\in A} \mathbb {E}_{x}[ \tau_A^{p_0}]<+\infty,
\end{align*}
where $A$ is the recurrent set introduced in (\ref{taupr}), and, as $n\rightarrow +\infty$, the sequence of bandwidth $(h_n)_{n\in \mathbb N^*}$ satisfies, as $n\rightarrow +\infty$,
\begin{align*}
h_n\rightarrow 0,\qquad \frac{nh_n^{dp_0/(p_0-1)}}{\log (n)} \rightarrow +\infty. 
\end{align*} 
\end{enumerate}

Most stable Markov chains satisfy (A\ref{ash6:bandwidth}). This has been the subject of many studies as presented in \cite{meyn+t:2009} where the drift condition (\ref{vcon}) is used to bound the moments of the return times. Examples include for instance auto-regressive models \cite[Theorem 16.5.1, equation 16.43]{meyn+t:2009} or the Metropolis-Hasting algorithm \cite[Example 5.2]{MR1890063}. Since the invariant measure $\pi$ is solution to $\pi(y)=\int \pi(x)P(x,y)dx$,
where $P$ is the transition density, the smoothness of $y\mapsto P(x,y)$ will essentially ensure the smoothness
of $\pi$ as required in (A\ref{ash2}).  Whenever $\pi>0$ on the support of $\varphi$ (e.g., as soon as $P(x,y)>0$ for all $(x,y)$) and continuous, the lower bound in (A\ref{ash2}) holds.

High order kernels can be constructed using radial kernel (\ref{assump:kernel})(i) or using product-type kernel (\ref{assump:kernel})(ii) following for instance \cite{gasser:1985} or \cite[section 1.11]{liracine}. The condition that $(\mathbb 1_{\{ Q\}} \ast K_h)$ is lower bounded (uniformly for $x$ and small $h$) intervenes in Corollary \ref{prop:coro:lowerbound_density} which is a key ingredient to control the small values of $\widehat \pi$. This condition cannot trivially verified as it involves the boundary of $Q$ and the regions where $K<0$. A first example is when $Q$ is the hypercube and $K$ is a product type kernel with initial kernel $K^{(0)}$ such that $\int_{-x}^{+\infty}  K^{(0)}(u)du>0$ for all $x>0$. A second example is when the boundary of $Q$ is smooth and $K$ is such that $\int _{\mathcal H} K(u) du >0$ for every half-space $\mathcal H$ containing $0$.

The following theorem extends the results of \cite{delyon2015} for independent sequences of random variables to Harris recurent Markov chains. A secondary improvement with respect to \cite{delyon2015} concerns the  requirements on the regularity of $\pi$. In \cite{delyon2015}, the density $\pi$ is assumed to be at least continuously differentiable on $\mathbb R^d$ and bounded away from $0$ on $Q$, excluding the case where $ \pi$ is supported on $Q$, and possibly discontinuous on the boundary. In the present approach, we include such cases by supposing that $\pi$ is in some Nikolski's regularity class. This informs us on the effect of jumps in the shape of $\pi$.  As the Nilkolski's regularity of such functions is smaller than $1/2$ \citep[Lemma 11]{delyon2015}, a bias term in  $h_n^{1/2}$ shall appear in the asymptotic decomposition.

\begin{theorem}\label{th:bigOpth}
 If moreover, (A\ref{ash1}) to (A\ref{ash6:bandwidth}), we have for every initial measure 
\begin{align*}
\widehat I_{\text{\normalfont ks}} -I_0&=O_{\mathbb P_{\nu}} (h_n^r+n^{-1}h_n^{-d}+n^{-1/2}h_n^s).
\end{align*}
\end{theorem}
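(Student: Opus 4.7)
The plan is to Taylor-expand $1/\widehat\pi(X_i)$ around $\pi_{h_n}(X_i)$, recast the linear correction as a symmetric V-statistic, perform a Hoeffding decomposition of that statistic, and control each resulting piece via the regeneration blocks of section~\ref{s2}. Assumptions (A\ref{ash1})--(A\ref{ash6:bandwidth}) imply the hypotheses of Corollary~\ref{prop:coro:lowerbound_density}, so there is an event $\mathcal A_n$ with $\mathbb P_\nu(\mathcal A_n)\to 1$ (the reduction of the initial measure from $\nu$ to $\pi$ relies on $\nu\ll\pi$ together with appendix~\ref{s32}) on which $\inf_Q\widehat\pi\geqslant cb/2$. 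Working on $\mathcal A_n$, the second-order expansion
\begin{equation*}
\frac{1}{\widehat\pi(X_i)}=\frac{1}{\pi_{h_n}(X_i)}-\frac{\widehat\pi(X_i)-\pi_{h_n}(X_i)}{\pi_{h_n}(X_i)^2}+\frac{(\widehat\pi(X_i)-\pi_{h_n}(X_i))^2}{\pi_{h_n}(X_i)^2\,\widehat\pi(X_i)}
\end{equation*}
produces a quadratic remainder bounded by $C\,n^{-1}\sum_i(\widehat\pi(X_i)-\pi_{h_n}(X_i))^2$. Expanding the square into a triple Markov sum and applying the order-$2$ moment bound of appendix~\ref{s31}, together with $\int K_{h_n}^2=O(h_n^{-d})$, yields a remainder of order $O_{\mathbb P}(n^{-1}h_n^{-d})$.

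Setting $\alpha=\varphi/\pi_{h_n}^2$ and $\beta=\varphi/\pi_{h_n}$ and using $K(-u)=K(u)$, the principal part of $\widehat I_{\text{ks}}$ can be written
\begin{equation*}
2n^{-1}\sum_i\beta(X_i)-n^{-2}\sum_{i,j}H(X_i,X_j),\qquad H(x,y)=\tfrac12\bigl(\alpha(x)+\alpha(y)\bigr)K_{h_n}(x-y).
\end{equation*}
Define $h_1(x)=\int H(x,y)\pi(y)dy=\tfrac12\beta(x)+\tfrac12(\alpha\pi)_{h_n}(x)$ and $\mu=\pi(h_1)$. A short computation based on the symmetry of $K$ gives the crucial identity $\pi(\beta)=\mu$. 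The Hoeffding-type decomposition of the V-statistic then reads
\begin{equation*}
n^{-2}\sum_{i,j}H(X_i,X_j)=\mu+2n^{-1}\sum_i(h_1(X_i)-\mu)+V_n^{\mathrm{deg}}+D_n,
\end{equation*}
where $V_n^{\mathrm{deg}}$ is the degenerate U-statistic with kernel $(x,y)\mapsto H(x,y)-h_1(x)-h_1(y)+\mu$ and $D_n=n^{-2}\sum_i H(X_i,X_i)$.

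Subtracting $I_0$ and combining the two linear pieces, the linear stochastic contribution reduces to the Markov empirical mean of $g(x)=\beta(x)-(\alpha\pi)_{h_n}(x)$, which is centered under $\pi$ since $\pi(\beta)=\mu$. Using $\pi_{h_n}\to\pi$ uniformly on $Q$ (Theorem~\ref{prop:unif_conv_density}), $g$ is essentially $(\varphi-\varphi_{h_n})/\pi$, whose $L_2(\pi)$-norm is $O(h_n^s)$ by (\ref{eq:L2regularization}) of Lemma~\ref{lemma:regularizationrates}; the block variance bound of appendix~\ref{s31} then yields a fluctuation of order $n^{-1/2}\|g\|_{L_2(\pi)}=O_{\mathbb P}(n^{-1/2}h_n^s)$. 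The leftover deterministic term equals $\pi(\beta)-I_0=\int\varphi(\pi-\pi_{h_n})/\pi_{h_n}\,dx$, which is $O(h_n^r)$ by (\ref{eq:L1regularization}) of Lemma~\ref{lemma:regularizationrates} applied to the bounded function $\varphi/\pi_{h_n}$. The diagonal $D_n$ is directly $O_{\mathbb P}(n^{-1}h_n^{-d})$ from $H(x,x)=\alpha(x)K_{h_n}(0)$ and the boundedness of $\alpha$ on $Q$.

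The main obstacle is bounding $V_n^{\mathrm{deg}}$, a degenerate U-statistic along a Harris chain. Following~\cite{bertail2011}, the chain is split into the i.i.d.\ regeneration blocks~(\ref{block}) and the double sum is re-organised by pairs of blocks: the cross-block terms are quadratic forms in independent block-functionals and are controlled by a second-moment calculation based on $\int H(x,y)^2\pi(x)\pi(y)\,dx\,dy=O(h_n^{-d})$, producing a contribution of order $n^{-1}h_n^{-d/2}$; the within-block terms are handled using the $p_0$-th moment bound on block lengths~(\ref{c0tau}) with $p_0>3$. The bandwidth condition $nh_n^{dp_0/(p_0-1)}/\log n\to\infty$ from (A\ref{ash6:bandwidth}) is calibrated precisely so that these within-block residuals are absorbed into $O_{\mathbb P}(n^{-1}h_n^{-d})$. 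Summing the four pieces yields $\widehat I_{\text{ks}}-I_0=O_{\mathbb P}(h_n^r+n^{-1}h_n^{-d}+n^{-1/2}h_n^s)$.
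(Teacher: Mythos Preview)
Your outline follows the paper's strategy---Taylor expansion, Hoeffding decomposition of the resulting V-statistic, and control of each piece via regeneration blocks---with one variation: you expand $1/\widehat\pi(X_i)$ around $\pi_{h_n}(X_i)$, whereas the paper expands around $\pi(X_i)$. The paper's choice keeps the auxiliary functions $\psi_q=\varphi/\pi^q$ independent of $n$, so Lemma~\ref{lemma:regularizationrates} applies directly; your $\alpha=\varphi/\pi_{h_n}^2$ and $\beta=\varphi/\pi_{h_n}$ are $n$-dependent, and invoking (\ref{eq:L1regularization}) for ``the bounded function $\varphi/\pi_{h_n}$'' requires a uniform-in-$n$ Nikolski bound that you do not establish.

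Two concrete inaccuracies. First, the identification ``$g$ is essentially $(\varphi-\varphi_{h_n})/\pi$'' is wrong: one has $g\approx \psi_1-(\psi_1)_{h_n}$ with $\psi_1=\varphi/\pi$, and since $\psi_1\in\EuScript H_q(s\wedge r,M)$ (not $\EuScript H_q(s,M)$), the $L_2(\pi)$-norm is $O(h_n^{s\wedge r})$, not $O(h_n^s)$. The final bound is unaffected because $n^{-1/2}h_n^{s\wedge r}\leqslant n^{-1/2}h_n^s+h_n^r$, but the stated mechanism is incorrect. Second, the bandwidth condition in (A\ref{ash6:bandwidth}) is \emph{not} what absorbs the within-block residuals of the U-statistic; those diagonal and boundary-block terms are controlled by $\mathbb E_a[\theta_a^2]$ and $\mathbb E_a[\theta_a^3]$, which is where $p_0>3$ enters. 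The bandwidth condition is used only through Theorem~\ref{prop:unif_conv_density} and Corollary~\ref{prop:coro:lowerbound_density}.

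The genuine gap is the quadratic remainder. The sentence ``expanding the square into a triple Markov sum and applying the order-$2$ moment bound of appendix~\ref{s31}'' does not work: Theorem~\ref{rosenth} treats linear empirical sums $\sum_i g(X_i)$, not triple sums $\sum_i\sum_j\sum_{j'}$ with entangled Markov dependence. This is the most delicate step of the paper's proof (Step~4). There one fixes $i$, re-indexes the inner sum $\sum_j K_{ij}$ by blocks, separates the block containing $i$ and the first/last blocks, and exhibits the remaining block-sum as a martingale conditional on $X_i$; Doob's inequality then reduces the bound to a single-block second moment $\mathbb E_\pi[K(0,B_1)^2]=O(h_n^{-d})$. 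Your sketch omits this entire mechanism.
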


\begin{proof}


We consider the split chain $(Z_i)_{i\in \mathbb N}$ introduced in section \ref{s2} with initial distribution $\nu$. We are interested in showing that $\mathbb E_{\nu} \mathbb{1}_{\{ | \widehat I_{\text{ks}} - I_0 |>a_n\}}\rightarrow 0$ for some sequence $a_n\rightarrow 0$. By applying Lemma \ref{lemma:initial_measure}, it suffices to prove the result in the case when $\nu$ equal $\pi$. 

By (\ref{lnconv}), we have that $l_n/n$ converges to its expectation $\alpha_0^{-1}>0$. We shall use several times that $n/l_n =O_{\mathbb P_\pi}(1)$ and that the product of two $O_{\mathbb P_\pi} (1)$ remains $ O_{\mathbb P_\pi}(1)$. 


Without loss of generality, we can assume that $l_n>2$. Indeed, the complementary event occurs with probability going to $0$ as $n$ increases.  

A convenient scaling in the sequel is to put $\alpha_0(l_n-1)$ and $\alpha_0(l_n-2)$ instead of $n$, in some places, because it simplifies many terms of our expansion. Hence, instead of $\widehat I_{\text{ks}}$, we rather study 
\begin{align*}
\widetilde  I_{\text{ks}} = (l_n-1)^{-1}\alpha_0^{-1}\sum_{i=1}^n \frac{\varphi(X_i)}{  \widehat \pi_i},
\end{align*}
with 
\begin{align*}
 \widehat \pi_i =\alpha_0^{-1}(l_n-2)^{-1} \sum_{j=1} ^{  n} K_{ij}.
\end{align*}
and $K_{ij}=K_{h_n}(X_i-X_j)$. Since $\widehat  I_{\text{ks}} =\left(\frac{l_n-1}{l_n-2} \right) \widetilde  I_{\text{ks}} $ and $\left(\frac{l_n-1}{l_n-2} \right) =O_{\mathbb P_\pi} (1)$, the rates of convergence of $\widetilde I_{\text{ks}}$ and $\widehat I_{\text{ks}}$, in probability, are the same.

We now introduce the notation 
\begin{align*}
&\psi_q(x)=\tfrac{\varphi(x)}{\pi(x)^q},~~~q\in\mathbb N.
\end{align*}
The following development, reminiscent of the Taylor expansion of $\widehat \pi_i$ around $\pi(X_i)$,
\begin{align*}
\frac {1}{\widehat \pi_i } =  \frac{2} { \pi(X_i)} 
- \frac {\widehat \pi_i } { \pi(X_i)^2 } 
+\frac {(\pi(X_i) -\widehat \pi_i  )^2} {\widehat \pi_i \pi(X_i)^2},
\end{align*}
allows us to expand $\widetilde  I_{\text{ks}}$ as follows
\begin{align*}
\widetilde  I_{\text{ks}} = (\alpha_0 (l_n-1))^{-1} \sum_{i=1}^n 2 \psi_{1}(X_i) 
 -( \alpha_0^2 (l_n-1)(l_n-2)  )^{-1} \sum_{i=1}^n \sum_{j=1}^n \psi_{2}(X_i)K_{ij}+R_{1,n} ,
\end{align*}
with 
\begin{align*}
R_{1,n} = (\alpha_0(l_n-1))^{-1} \sum_{i=1}^n\frac{\psi_2(X_i)(\pi(X_i)-\widehat \pi_i)^2}{\widehat \pi_i} .
\end{align*}
Reorganizing the first two terms according to the blocks leads to 
\begin{align*}
\widetilde  I_{\text{ks}} = -( \alpha_0^2 (l_n-1)(l_n-2)  )^{-1}  \sum_{ k = 0} ^{l_n} \sum_{ l = 0} ^{l_n}  H_{kl}+ (\alpha_0 (l_n-1))^{-1}\sum_{k=0}^{l_n} 2 G_k +R_{1,n},
\end{align*}
with for any $(k,l)\in \mathbb N^2$,
\begin{align*}
&H_{kl}= \sum_{i\in B_k ,j\in B_l}  \psi_{2}(X_i)K_{ij}\\
&G_k =\sum_{i\in B_k } \psi_{1}(X_i).
\end{align*}
The notation $i\in B_k$ is a short-cut for $Z_i\in B_k$ and the block $B_0$ is the first (incomplete) 
block given by $Z_1,Z_2,\ldots Z_{\theta_a}(1)$.
Diagonal terms of the above $U$-statistic and terms related to the first and last block are 
treated as remainder, we write
\begin{align}\label{decomp10}
\widetilde  I_{\text{ks}} = -( \alpha_0^2 (l_n-1)(l_n-2)  )^{-1} \sum_{ k = 1} ^{l_n-1} \sum_{ l = k+1} ^{l_n-1} \{H_{kl}^*\}+ (\alpha_0 (l_n-1))^{-1} \sum_{k=1}^{l_n-1} \{2 G_k +R_{1,n}+R_{2,n}\},
\end{align}
with $H_{kl}^*= H_{kl}+H_{lk}$ and
\begin{align*}
R_{2,n}= &-( \alpha_0^2 (l_n-1)(l_n-2)  )^{-1}  \big(H_{00}+H_{l_nl_n}+H_{0l_n}^* +\sum_{k=1}^{l_n-1} \{H_{0k}^*+ H_{l_nk}^*+H_{kk} \}\big) \\
&+ (\alpha_0 (l_n-1))^{-1} 2 ( G_0+ G_{l_n}) .
\end{align*}
The first term in (\ref{decomp10}) is a $U$-statistic whose fluctuations can be controlled by using an Hoeffding-type decomposition with respect to the blocks. Denoting 
\begin{align*}
\widetilde H_k^*=\mathbb E_ a[H_{1k}^* |B_k],
\end{align*}
we can rewrite
\begin{align*}
\widetilde  I_{\text{ks}}
 -I_{\varphi} =U_n+M_n+B_n+R_{1,n}+R_{2,n},
\end{align*}
with (we use that $\sum_{1\leqslant k< l\leqslant l_n-1}\{\widetilde H_k^*+\widetilde H_l^*\}=(l_n-2)\sum_{1\leqslant  k\leqslant l_n-1} \widetilde H_k^* $ and we underbrace terms which have been deliberately introduced and removed)
\begin{align*}
&U_n = -( \alpha_0^2 (l_n-1)(l_n-2)  )^{-1} \sum_{ k = 1} ^{l_n-1} \sum_{ l = k+1} ^{l_n-1} \{H_{kl}^*\underbrace{-\widetilde H_k^*-\widetilde H_l^*}_{(1)}+\underbrace{E[H_{12}^*]}_{(2)}\},\\
&M_n= (\alpha_0 (l_n-1) )^{-1} \sum_{k=1}^{l_n-1}\{  2G_k - \underbrace{\alpha_0^{-1} \widetilde H_k^*}_{(1)} -\underbrace{E(2G_1 -\alpha_0^{-1} \widetilde H_1^*)}_{(3)}\}, \\
&B_n = \alpha_0 ^{-1}\underbrace{E(2G_1 -\alpha_0^{-1} \widetilde H_1^*)}_{(3)}\} + \underbrace{( 2\alpha_0^2)^{-1} E[H_{12}^*] }_{(2)} - \int \varphi(x) dx.
\end{align*}
The notations follow from the fact that $U_n$ is a U-statistic, $M_n$ is a martingale, $B_n$ is a bias term (nonrandom), $R_{1,n}$ comes from the remainder of the Taylor expansion and $R_{2,n}$ corresponds to uncompleted blocks and diagonal terms. We shall now compute bounds for each term separately.

\paragraph{Step~1.} $U_n=O_{\mathbb P}(n^{-1}h_n^{-d/2})$.

Let $\widetilde U_n = (\tfrac {\alpha_0^2(l_n-1)(l_n-2)}{n^2}) U_n$, we have, since $l_n\leqslant n$
\begin{align*}
|\widetilde U_n |
&\leqslant n^{-2} \max_{1\leqslant L\leqslant n} \left|S_L\right| ,
\end{align*}
with $S_L = \sum_{1\leqslant k< l\leqslant L}  \{H_{kl}^*-\widetilde H_k^*-\widetilde H_l^*+E[H_{12}^*]\}$. The independence between the blocks $(B_k)_{k=1,\ldots n}$ defined in (\ref{block}), 
implies that the process $L\mapsto S_L$ is a martingale. 
Then by Doob's inequality, we know that
\begin{align*}
\mathbb {P}_\pi(|\widetilde U_n |>\epsilon) &\leqslant \frac{ES_n^2}{\epsilon^2 n^4},
\end{align*}
and it remains to develop the squared sum inside the expectation. 
By construction, the terms in the sum defining $S_L$ are all orthogonal. As a consequence, we find
  \begin{align*}
\mathbb {P}_\pi(|\widetilde U_n |>\epsilon) &\leqslant \frac{ n(n-1) \mathbb E_a [\{H_{12}^*-\widetilde H_1^*-\widetilde H_2^*+E[H_{12}^*]\}^2] }{2\epsilon^2 n^4}\leqslant \frac{ \mathbb E_a [H_{12}^{*2}] }{2\epsilon^2 n^2}.
\end{align*}
Because of the symmetry of $K$ and the boundedness of $\psi_2$ and $ K  $, we have, denoting by $\Delta_k$ the length of $B_k$ for $k\in\mathbb N$ (as introduced in the proof of Theorem \ref{prop:unif_conv_density}),
  \begin{align*}
 \mathbb E_a [H_{12}^2] 
&= \mathbb E_a \Big( \sum_{i\in B_1, j\in B_2}\{\psi_2(X_i)K_{ij}+\psi_2(X_j)K_{ji}\}\Big)^2 \\
&\leqslant \psi_{2,\infty}^2 \mathbb {E}_a\Big(\sum_{i\in B_1, j\in B_2} | K_{ij}|+|K_{ji}|  \Big)^2\\
&\leqslant \psi_{2,\infty}^2 K_\infty h_n^{-d}  
\mathbb {E}_a\Big(\Delta_1\Delta_2 \sum_{i\in B_1, j\in B_2}|K_{ij}|+|K_{ji}|\Big)\\
&\leqslant\psi_{2,\infty}^2K_\infty h_n^{-d}  
\mathbb {E}_a\Big(  (\Delta_1^2+ \Delta_2^2)\sum_{i\in B_1, j\in B_2} | K_{ij}|\Big)\\
&=2\psi_{2,\infty}^2 K_\infty h_n^{-d}\mathbb {E}_a\Big(\Delta_1^2 \sum_{i\in B_1,j\in B_2}| K_{ij}|  \Big).
\end{align*}
The independence between the blocks permits to integrate with respect to $B_2$ knowing $B_1$, 
and that yields, using (\ref{bert0}),
\begin{align*}
\mathbb {E}_a\Big(  \Delta_1^2  \sum_{i\in B_1, j\in B_2} | K_{ij}| \Big) 
&=\alpha_0 \mathbb {E}_a\Big(  \theta_a^2  \sum_{i\in B_1} \int | K_{h_n} (X_i-y)|\pi(y)dy \Big) \\
&\leqslant \alpha_0 \pi_\infty \int | K( x)| dx ~\mathbb {E}_a [\theta_a^3].
\end{align*}
From Lemma~\ref{lemma:tau} and Assumption~(A\ref{ash6:bandwidth}), $\mathbb {E}_a  \theta_a^3$ is finite. Conclude using that $ U_n = O_p(1) \widetilde U_n$.

\paragraph{Step~2.} $M_n=O_{\mathbb P}(n^{-1/2}h_n^{s \wedge r})$.

Consider $\widetilde M_n = \big(\frac {\alpha_0 (l_n-1)} { n }\big) M_n$, we have
\begin{align*}
|\widetilde M_n |&\leqslant n^{-1} \max_{1\leqslant L\leqslant n} \Big| \sum_{1\leqslant k \leqslant L} \{2G_k - {\alpha_0^{-1} \widetilde H_k^* } -{E(2G_k -\alpha_0^{-1} \widetilde H_k^*)}\} \Big| ,
\end{align*}
and Doob's inequality yields
\begin{align*}
\mathbb {P}_\pi (|\widetilde M_n |>\epsilon) &\leqslant \frac{\mathbb E_a\big(\sum_{1\leqslant  k\leqslant n} \{2G_k - {\alpha_0^{-1} \widetilde H_k^*} -{E(2G_1 -\alpha_0^{-1} \widetilde H_1^*)}\} \big)^2}{\epsilon^2 n^2}\\
&= \frac{\mathbb E_a \big( 2G_1 - {\alpha_0^{-1} \widetilde H_1^*} -{E(2G_1 -\alpha_0^{-1} \widetilde H_1^*)} \big)^2}{\epsilon^2 n}\\
&\leqslant \frac{\mathbb E_a \big( 2G_1 - {\alpha_0^{-1} \widetilde H_1^*} \big)^2}{\epsilon^2 n}.
\end{align*}
Because of (\ref{bert0}), we have
\begin{align}\nonumber
\alpha_0^{-1} \widetilde H_1^*&= \sum_{i\in B_1} \int \Big(\psi_{2}(X_i)K_{h_n}(X_i-y)+\psi_{2}(y)K_{h_n}(y-X_i) \Big) \pi (y) dy\\
&=  \sum_{i\in B_1}  \{\psi_{2}(X_i)\pi_{h_n}(X_i) + \psi_{1h_n}(X_i)\} ,\label{psi2psi1}
\end{align}
hence it holds
\begin{align}\label{equationtermMn}
2G_1 - {\alpha_0^{-1} \widetilde H_1^*}  =   \sum_{i\in B_1}  \{\psi_{2}(X_i)(\pi(X_i)-\pi_{h_n}(X_i)) + (\psi_1(X_i)- \psi_{1h_n}(X_i))\}.
\end{align}
Then from Minkowski's inequality and Lemma \ref{lemma:insideblock}, we get for some $2<p<p_0-1$ (see Assumption (A\ref{ash6:bandwidth})),
\begin{align*}
\| 2G_1 - {\alpha_0^{-1} \widetilde H_1^*} \|_2 &\leqslant \psi_{2,\infty} \big\| \sum_{i\in B_1} | \pi(X_i)-\pi_{h_n}(X_i)| \big\|_2+ \big\| \sum_{i\in B_1} | \psi_1(X_i)- \psi_{1h_n}(X_i) | \big\|_2 \\
&\leqslant C \big( \big\|( \pi(X_0)-\pi_{h_n}(X_0))\tau_A^{p/2}\big\|_2+\big\|( \psi_1(X_0)-\psi_{1h_n}(X_0))\tau_A^{p/2}\big\|_2\big) ,
\end{align*}
where $C $ is a constant that depends on $p$ and on the chain and $\| \cdot \|_2$ stands for the $L_2(\pi)$-norm. Now we use H\"older's inequality, with conjugates $u$ and $v$, to obtain
\begin{align*}
\| 2G_1 - {\alpha_0^{-1} \widetilde H_1^*} \|_2 &\leqslant C \| \tau_A^{p/2}\big\|_{2v} \left( \big\| \pi(X_0)-\pi_{h_n}(X_0)\big\|_{2u}+\big\| \psi_1(X_0)-\psi_{1h_n}(X_0)\big\|_{2u}\right) .
\end{align*}
Now choose $v$ sufficiently close to $1$ to ensure, using (A\ref{ash6:bandwidth}), (\ref{eq:inequality_invariant_measu}) 
and (\ref{c0tau0prime}), that $\mathbb {E}_\pi[\tau_A^{pv}]\leqslant \mathbb {E}_\pi [\tau_A^{p_0-1}]\leqslant \mathbb {E}_\pi [\theta_a^{p_0-1}]< +\infty $. Use Lemma \ref{lemma:regularizationrates} to obtain the desired rate, 
$h_n^r+h_n^s$, for the two other quantities.

\paragraph{Step~3.} $B_n=O(h_n^{r} )$.

By (\ref{bert0}) and (\ref{equationtermMn}), we have that 
\begin{align*}
\alpha_0^{-1} \mathbb E_a (2G_1 -\alpha_0^{-1} \widetilde H_1^*) 
=  \int \psi_{2}(x)(\pi(x)-\pi_{h_n}(x))\pi(x)dx  + \int (\psi_1(x)- \psi_{1{h_n}}(x)) \pi(x)dx ,
\end{align*}
and using  (\ref{eq:fubini}) and the definition of $\psi_2$ gives
\begin{align*}
\alpha_0^{-1} \mathbb E_a (2G_1 -\alpha_0^{-1} \widetilde H_1^*) 
=  2\pi(\psi_1- \psi_{1{h_n}}).
\end{align*}
Similarly from (\ref{psi2psi1}), (\ref{bert0}) and (\ref{eq:fubini}), it follows that
\begin{align*}
(2\alpha_0^{2})^{-1} \mathbb E_a [H_{12}^*] = (2\alpha_0^{2})^{-1} \mathbb E_a[\widetilde H_{1}^*]
= \frac 1 2   \int  (\psi_{2}(x)\pi_{h_n}(x) + \psi_{1{h_n}} (x))\pi(x)dx ,
=\pi(\psi_{1{h_n}}),
\end{align*}
Since $\int\varphi(x)dx=\pi(\psi_1)$, this yields
\begin{align*}
B_n &=\pi(  \psi_1- \psi_{1{h_n}}).
\end{align*}
 Because there exists $M$ such that $\psi_1$ belongs to $\EuScript H_1 (r\wedge s,M)$, applying Lemma \ref{lemma:regularizationrates} gives a bound in $h_n^{r\vee \min(r,s)}=h_n^r$ for $B_n$.

\paragraph{Step~4.} $R_{1,n}=O_{\mathbb P}(h_n^{2r}+n^{-1}h_n^{-d})$.

By Corollary~\ref{prop:coro:lowerbound_density}, and because
$n (\alpha_0 (l_n-1))^{-1} = O_{\mathbb P}(1)$, we get
\begin{align*}
R_{1,n}& \leqslant   O_\mathbb {P}(1)\left\{ n^{-1} \sum_{i=1}^n (\pi(X_i) -\widehat \pi_i)^2 \right\}  \\
&\leqslant  O_\mathbb {P}(1)\left\{ n^{-1} \sum_{i=1}^n(\pi(X_i)-\pi_{h_n}(X_i))^2+(\pi_{h_n}(X_i)-\widehat \pi_i)^2\right\}.
   \end{align*}
We compute the expectation of 
the first term inside the brackets. By Lemma \ref{lemma:regularizationrates},  
we obtain a bound  $O_{\mathbb P}(h_n^{2r})$. To treat the second term inside the bracket, 
denote by $J^{(-i)}= \{1\leqslant k\leqslant l_n-1 \ : \  i\notin B_k \}$, 
$l(i)=\{k\in \mathbb N : \ i\in B_k\}$ and $K{(i,B)}= \sum_{j\in B}K_{h_n}(X_i-X_j)$, write
($r_{1,n}$ and $r_{2,n}$ are specified below)
\begin{align*}
&\sum_{i=1}^n (\pi_{h_n}(X_i) -\widehat \pi_i)^2\\
&~=\sum_{i=\theta_a(1)+1}^{\theta_a(l_n)} (\pi_{h_n}(X_i) -\widehat \pi_i)^2 + r_{1,n}\\
&~=\quad r_{1,n}\\
&\quad~+\sum_{i=\theta_a(1)+1}^{\theta_a(l_n)}  \Big(\pi_{h_n}(X_i) -(\alpha_0(l_n-2))^{-1}\big\{K(i,B_0)+K{(i,B_{l_n})}+K{(i,B_{l(i)}) } + \sum_{k\in J^{(-i)}} K{(i,B_k)}  \big\} \Big)^2\\
&~\leqslant  2(l_n-2)^{-2}  \sum_{i=\theta_a(1)+1}^{\theta_a(l_n)}  \Big( \sum_{k\in J^{(-i)}}  \{\pi_{h_n}(X_i) - \alpha_0^{-1}K{(i,B_k)}\}   \Big)^2+ r_{1,n}+r_{2,n}\\
&~\leqslant   2(l_n-2)^{-2}   \sum_{i=1}^n  \Big( \sum_{k\in J^{(-i)}} \{\pi_{h_n}(X_i) - \alpha_0^{-1}K{(i,B_k)}\}   \Big)^2 + r_{1,n}+r_{2,n},
   \end{align*}
with
\begin{eqnarray*}
r_{1,n}&= & \sum_{i=1}^n (\pi_{h_n}(X_i) -\widehat \pi_i)^2 (\mathbb{1}_{\{i\leqslant \theta_a(1)\}}+\mathbb{1}_{\{i>\theta_a(l_n)\}} )\\
&\leqslant &\sup_{y\in\mathbb R^d}|\widehat \pi (y) - \pi_{h_n}(y)| (\theta_a(1)+ \Delta_{l_n} )  
\end{eqnarray*}
and 
\begin{align*}
r_{2,n}&=  2(\alpha_0 (l_n-2))^{-2} \sum_{i=\theta_a(1)+1}^{\theta_a(l_n)}   \Big(K(i,B_0)+K(i,B_{l_n})+K(i,B_{l(i)}) \Big)^2\\
&\leqslant 2(\alpha_0 (l_n-2))^{-2}  K_\infty^2 h_n^{-2d} \,\sum_{i=\theta_a(1)+1}^{\theta_a(l_n)} (\Delta_0+ \Delta_{l_n}+ \Delta_{l(i)})^2 \\
&\leqslant 6(\alpha_0 (l_n-2))^{-2}  K_\infty^2 h_n^{-2d}\, \sum_{i=\theta_a(1)+1}^{\theta_a(l_n)} ( \Delta_0^2+ \Delta_{l_n}^2+ \Delta_{l(i)}^2)\\
&\leqslant 6(\alpha_0 (l_n-2))^{-2}  K_\infty^2 h_n^{-2d}\Big( n( \Delta_0^2+ \Delta_{l_n}^2) +\sum_{i=\theta_a(1)+1}^{\theta_a(l_n)}   \Delta_{l(i)}^2\Big)
\end{align*}
Because $\sum_{i=\theta_a(1)+1}^{\theta_a(l_n)}   \Delta_{l(i)}^2 =  \sum_{k=1}^{l_n-1}  \Delta_k^3\leqslant \sum_{k=1}^{n}  \Delta_k^3$, 
we find that the above term between parentheses has expectation of order 
$n ( \mathbb E _{\pi}  \theta_a^2+\mathbb E _{a} \theta_a ^2 + \mathbb E _{a}\theta_a^3)$. 
As by Lemma \ref{lemma:tau} and Assumption \ref{ash6:bandwidth}, the previous expectations 
are bounded, it follows that $r_{2,n} = O_{\mathbb P_\pi} (n (nh_n^d )^{-2} )$
has a contribution $O_{\mathbb P_\pi} ( (nh_n^d )^{-2} )$ to $R_{1,n}$. 
Moreover, we have that $r_{1,n} =o_{\mathbb P_\pi} (1 )$ by 
Theorem \ref{prop:unif_conv_density}, which gives a contribution 
$o_{\mathbb P_\pi} (n^{-1} )$ to $R_{1,n}$. 
Regarding the objective of the present step, $r_{1,n}$ and $r_{2,n}$ 
are negligible, so that, we can concentrate on 
\begin{align*}
 \sum_{i=1}^n  \Big( \sum_{k\in J^{(-i)}} \{\pi_{h_n}(X_i) - \alpha_0^{-1}K{(i,B_k)}\}   \Big)^2.
\end{align*}
We use the independence between the blocks to compute 
\begin{align*}
\mathbb E_\pi\sum_{i=1}^n  \left( \sum_{k\in J^{(-i)}} \{\pi_{h_n}(X_i) - \alpha_0^{-1}K(i,B_k)\}   \right)^2 &= n\mathbb E_\pi  \left(\sum_{k=1 }^{l_n-2} \{\pi_{h_n}(X_0) - \alpha_0^{-1}K(0,B_k)\} \right)^2\\
&\leqslant n \mathbb E_\pi \left( \max_{1\leqslant l\leqslant n}\Big| \sum_{k=1 }^{l} \{\pi_{h_n}(X_0) - \alpha_0^{-1}K(0,B_k)\} \Big|\right)^2.
\end{align*} 
Since $l\mapsto \sum_{k=1 }^{l} \{\pi_{h_n}(X_0) - \alpha_0^{-1}K(0,B_k)\}$ is a martingale, we get from Doob's inequality that
 \begin{align*}
\mathbb E_\pi \sum_{i=1}^n  \left( \sum_{k\in J^{(-i)}} \{\pi_{h_n}(X_i) - \alpha_0^{-1}K(i,B_k)\}   \right)^2 
&\leqslant 4n \mathbb E_\pi \left[ \left(\sum_{k=1 }^{n} \{\pi_{h_n}(X_0) - \alpha_0^{-1}K(0,B_k)\}\right)^2\right]\\
 &= 4n^2 \mathbb E_\pi \left[ \{\pi_{h_n}(X_0) - \alpha_0^{-1}K(0,B_1)\}^2\right] \\
 &\leqslant 4n^2\alpha_0^{-2} \mathbb E_\pi[K(0,B_1)^2 ]\\
 &\leqslant 4n^2\alpha_0^{-2} h_n^{-d} K_\infty  \mathbb {E}_\pi \Big\{\Delta_1 \sum_{j\in B_1} |K_{0j}| \Big\}.
  \end{align*}
Here we use the independence between $B_1$ and $X_0$ to write
  \begin{align*}
\mathbb {E}_\pi \Big\{\Delta_1 \sum_{j\in B_1} |K_{0j}| \Big\} &=   \mathbb {E}_a \Big\{\theta_a \sum_{j\in B_1}\int \pi (X_j-h_nu) |K(u)| du\Big\}\\
 &\leqslant   \pi_{\infty} \int |K(x)| dx\, \mathbb {E}_a [\theta_a^2 ].
 \end{align*}
This leads to a contribution $O_{\mathbb P_\pi} (n^{-1}h_n^{-d} )$ to $R_{1,n}$.

\paragraph{Step~5.} $R_{2,n}=O_{\mathbb P}(n^{-1} h_n^{-d})$.

Recall that 
\begin{align*}
R_{2,n}= &-( \alpha_0^2 (l_n-1)(l_n-2)  )^{-1}  \big(H_{00}+H_{l_nl_n}+H_{0l_n}^* +\sum_{k=1}^{l_n-1} \{H_{0k}^*+ H_{l_nk}^*+H_{kk} \}\big) \\
&+ (\alpha_0 (l_n-1))^{-1} 2 ( G_0+ G_{l_n}) ,
\end{align*}
with $H_{kl}= \sum_{i\in B_k ,j\in B_l}  \psi_{2}(X_i)K_{ij}$, $H_{kl}^*=H_{kl}+H_{lk}$ and $G_k =\sum_{i\in B_k } \psi_{1}(X_i)$. First, the boundedness of $\psi_1$ yields
\begin{align*}
\mathbb E_\pi  |G_0 |\leqslant \psi_{1,\infty} \mathbb {E}_{\pi}\theta_a\qquad 
\text{and} \qquad \mathbb E_\pi |G_{l_n} |\leqslant  \psi_{1,\infty}  \mathbb {E}_{a}\theta_a,
\end{align*}
leading to a contribution of order $O_{\mathbb P_\pi}(n^{-1})\ll O_{\mathbb P_\pi}((nh_n^d)^{-1})$. Second, we have
\begin{align*}
\mathbb E_{\pi} |\sum_{k=1}^{l_n-1} (H_{0k}^*+ H_{l_nk}^* + H_{kk}) |&\leqslant n\mathbb E_{\pi}  (|H_{01}^*|+|H_{l_n1}^*| + |H_{11}|)\\
&\leqslant nK_\infty\psi_{2,\infty}  h_n^{-d} (\mathbb {E}_a\theta_a \mathbb {E}_\pi\theta_a+(\mathbb {E}_a\theta_a )^2+ \mathbb {E}_a\theta_a^2)  ,
\end{align*}
involving a $(nh_n^d)^{-1}$ in the $O_{\mathbb{P}}$. 
In a similar fashion, the term $H_{00}+H_{l_nl_n}+H_{0l_n}^* $ has order $(n^2h_n^d)^{-1}\ll (nh_n^d)^{-1}$.
\end{proof}

\section{Numerical experiments}\label{s5}

\subsection{Estimation algorithm}
\label{ss:algo}

Let us first recall the framework investigated in the paper. We consider the estimation of the integral of a function $\varphi$ over $Q$ from a dataset $(X_i,\varphi(X_i))_{1\leq i\leq n}$ when the $X_i$'s form a Markov chain. The estimator $\widehat I_{\text{ks}}$ of $I_0=\int_Q\varphi(x) dx$ is given by 
\begin{align*}
\widehat I_{\text{ks}} =  n^{-1} \sum_{i=1}^n \frac{\varphi(X_i)}{\widehat \pi(X_i) }.
\end{align*}

As noticed in \cite{delyon2015} for independent data, the crucial factor for the estimation of $I_0$ is to select the optimal bandwidth parameter $h_n$ appearing in the estimator $\widehat{\pi}$ of the design distribution given by 
\begin{align*}
\widehat \pi (x) = (nh_n^d)^{-1} \sum_{i=1} ^n K( (x-X_i)/h_n ), \qquad x\in \mathbb R^d.
\end{align*} 
In this paper, we propose to use the multivariate plug-in bandwidth selection developed in \cite{Chacon2010}. More precisely, we exploit the implementation of this algorithm in the \verb+R+ package \verb+ks+ (see \cite{duong:2007} for a presentation of a preliminary version). It ensures better results than the $\varphi$-based method proposed in \cite{delyon2015} in both the independent and the Markov frameworks. Moreover, this method is simpler because it provides an optimal bandwidth that only depends on the design (and not on $\varphi$) contrary to the aforementioned competitive strategy. This is a particularly interesting procedure to integrate several functions from the same design points, e.g., temperature and salinity, because it requires only one selection of the bandwidth. We strongly recommend to use this method rather than the one proposed in the previous paper \cite{delyon2015}. Such a choice of the bandwidth does not fit the theoretical framework of Theorem \ref{th:bigOpth} (as it depends on the design points) but does not require any knowledge on the regularity of the functions $\varphi $ and $f$.

\cite{delyon2015} introduce a corrected version $\widehat{I}_\text{ks}^c$ of the integral estimator $\widehat{I}_\text{ks}$ that presents both smaller bias and variance in numerical experiments,
$$\widehat{I}_\text{ks}^c = \frac{1}{n}\sum_{i=1}^n \frac{\varphi(X_i)}{\widehat{\pi}(X_i)}\left(1-\frac{\widehat{v}(X_i)}{\widehat{\pi}(X_i)^2}\right),$$
where
$$\widehat{v}(x) = \frac{1}{n(n-1)}\sum_{i=1}^n \left[\frac{1}{h_n^d} K\left(\frac{x-X_i}{h_n}\right)  -   \widehat{\pi}(x)\right]^2 ,\qquad x\in \mathbb R^d .$$
This new estimator has been chosen in order to make vanish the leading term in the expansion of the estimation error in the independent case. Function $\widehat{v}$ being positive, $\widehat{I}_\text{ks}^c$ is lower than $\widehat{I}_\text{ks}$ which tends to have a positive bias. In the sequel, we compute both $\widehat{I}_\text{ks}$ and $\widehat{I}_\text{ks}^c$ from the same bandwidth $h_n$ depending only on the design points and obtained as aforementioned.

\subsection{Simulation study}

We consider the following 3 models. For each of them, the function $\varphi$ will be integrated on its support given by $Q=[0,1]^d$.
\begin{itemize}
\item $\mathcal{M}_1$: $\varphi(x_1,\dots,x_d) = \prod_{i=1}^d \left[2\sin(\pi x_i)^2 \mathbb{1}_{[0,1]}(x_i)\right]$;
\item $\mathcal{M}_2$: $\varphi(x_1,\dots,x_d) = \prod_{i=1}^d \left[\frac{1+\pi^2}{\pi (1+\exp(1))}\sin(\pi x_i)\exp(x_i) \mathbb{1}_{[0,1]}(x_i)\right]$;
\item $\mathcal{M}_3$: $\varphi(x_1,\dots,x_d) = \prod_{i=1}^d \left[\frac{\pi}{2} \sin(\pi x_i) (1+\cos(5\pi x_i)) \mathbb{1}_{[0,1]}(x_i)\right]$.
\end{itemize}
For improved comparability, the normalizing constant of each model has been chosen in such a way that $I_0=1$. The one-dimensional shape of each model is presented in Figure \ref{fig:models}. The $3$ models are continuous but have their own features. $\mathcal{M}_1$ is symmetric centered on the center of $Q$, while $\mathcal{M}_2$ has a negative skewness. Finally $\mathcal{M}_3$ has 3 distinct modes. Consequently, one may expect that the models are somehow sorted by increasing difficulty in numerical integration.

\begin{figure}[h]
\centering\includegraphics[width=0.55\textwidth]{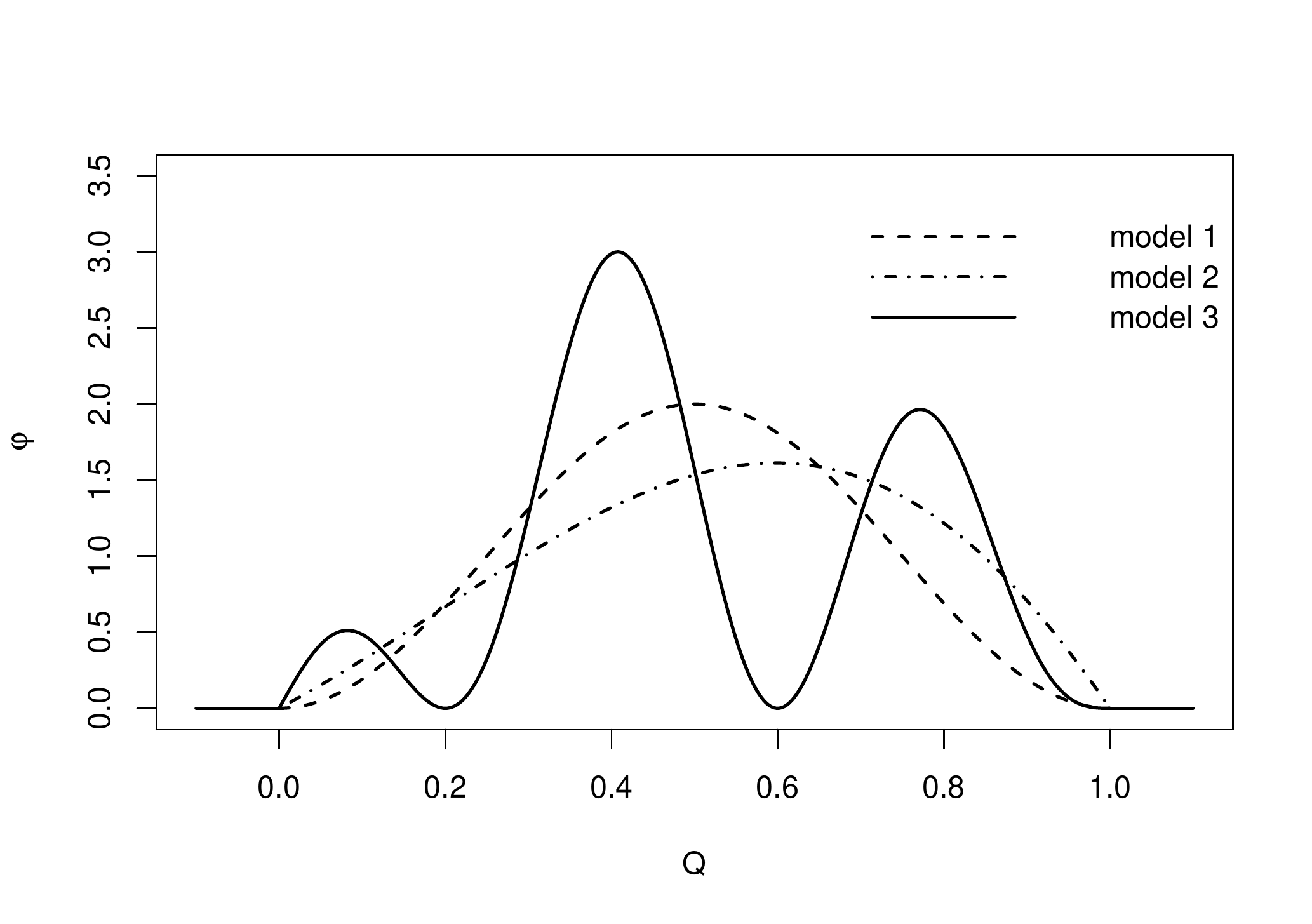}
\caption{Shape of function $\varphi$ for each model $\mathcal{M}_i$, $1\leq i\leq3$, in dimension $d=1$.}
\label{fig:models}
\end{figure}

For each model $\mathcal{M}_i$, $1\leq i\leq3$, we have computed the estimator and its corrected version presented in section \ref{ss:algo} from independent design (data with uniform distribution on $Q$ denoted by $\mathcal{U}_Q$) 
and from Markov design. In the Markov case, the dataset is generated according to the Metropolis-Hastings algorithm 
with proposition kernel $$P_r(x,dy) = \mathcal{U}_{[x-\varepsilon,x+\varepsilon]^d}(dy),$$
with $\varepsilon=0.2$ and target measure $\mathcal{U}_{Q}$. The Markov chain that results from this Metropolis-Hastings algorithm satisfies (A\ref{ash6:bandwidth}). Indeed first note that, the kernel $P^6(x,dy)$ has a density which is lower bounded on $[0,1]^{2d}$ by a positive number
(because $6\varepsilon>1$ and $\varphi(x)/\varphi(y)$ is lower bounded on $[0,1]^{2d}$), i.e., starting from any $x$ and waiting enough time will guarantee that any region is attained by the chain with positive probability. In other words, the uniform Doeblin condition holds for the chains $(X_{mk+1})_{k\geqslant 1}$ with the Lebesgue measure. Applying Theorem 16.0.2, page 394, in \cite{meyn+t:2009}, we obtain that the return time to $A$ of this chain, which is larger than the return time of the initial chain, has an exponential moment.

Independent and Markov designs have thus not been generated according to the same simulation model but share the same distribution, which makes them comparable. This will allow us to evaluate how the Markovian dependency impacts the performance of the methods. For the sake of reference, we have also computed the Monte Carlo estimator
$$\widehat{I}_\text{mc} = n^{-1} \sum_{i=1}^n \frac{\varphi(X_i)}{\pi(X_i) },$$
which can only be done in a simulation study where the distribution $\pi$ is known, and not from real data. Furthermore, we have investigated various sample sizes ($n=500$, $n=1\,000$ and $n=2\,000$) and different dimensions ($d=1$, $d=2$ and $d=3$). All the numerical results over $50$ independent replicates are provided in Figures \ref{fig:boxplots:model1} (model $\mathcal{M}_1$), \ref{fig:boxplots:model2} (model $\mathcal{M}_2$) and \ref{fig:boxplots:model3} (model $\mathcal{M}_3$). In order to make this numerical study reproducible, the \verb+R+ scripts implemented to generate datasets and estimate the integrals of interest are available at the webpage \url{http://iecl.univ-lorraine.fr/~Romain.Azais/}.

\begin{figure}[p]
\centering
\begin{tabular}{cc}
   \includegraphics[width=0.5\textwidth]{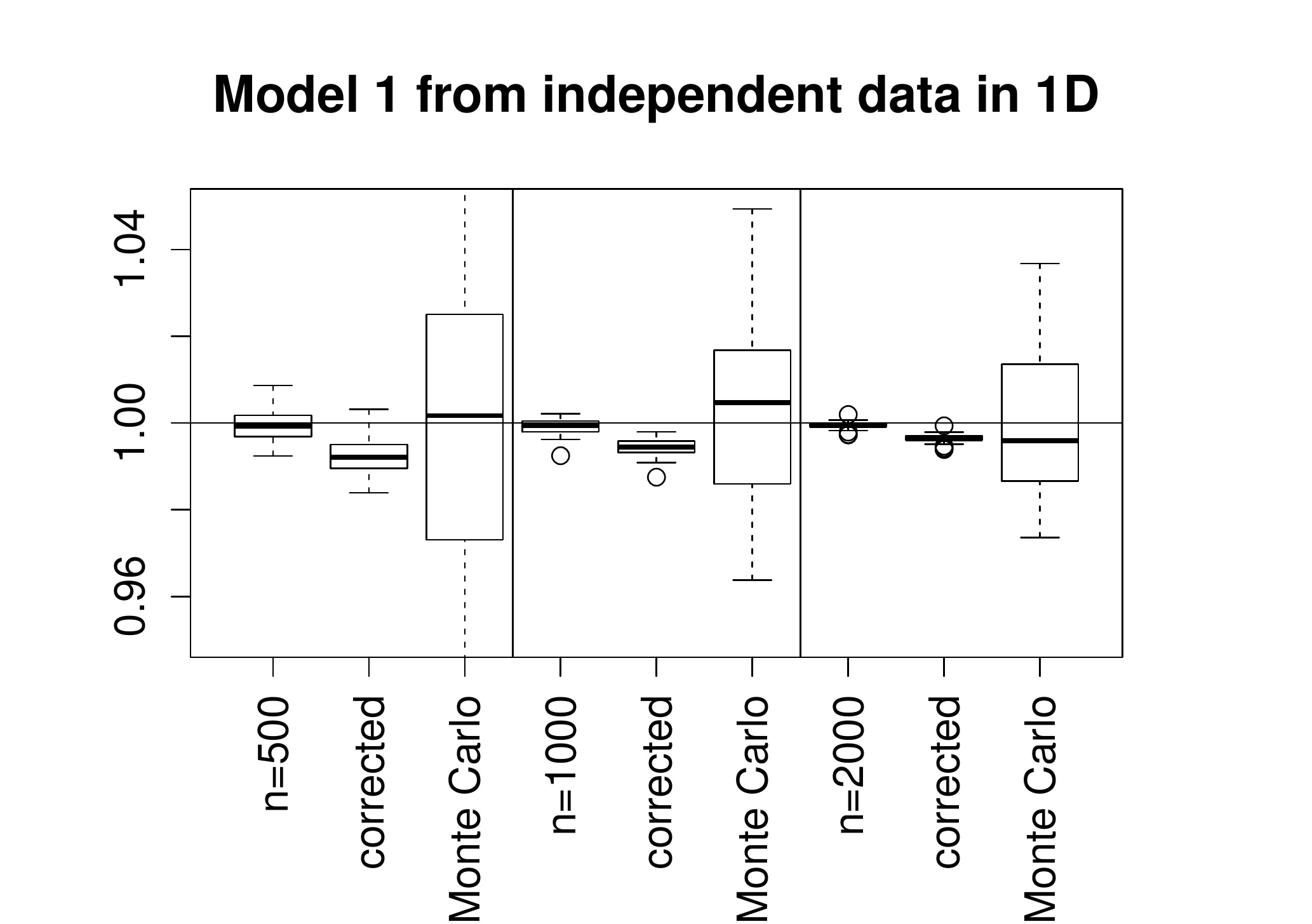} & \includegraphics[width=0.5\textwidth]{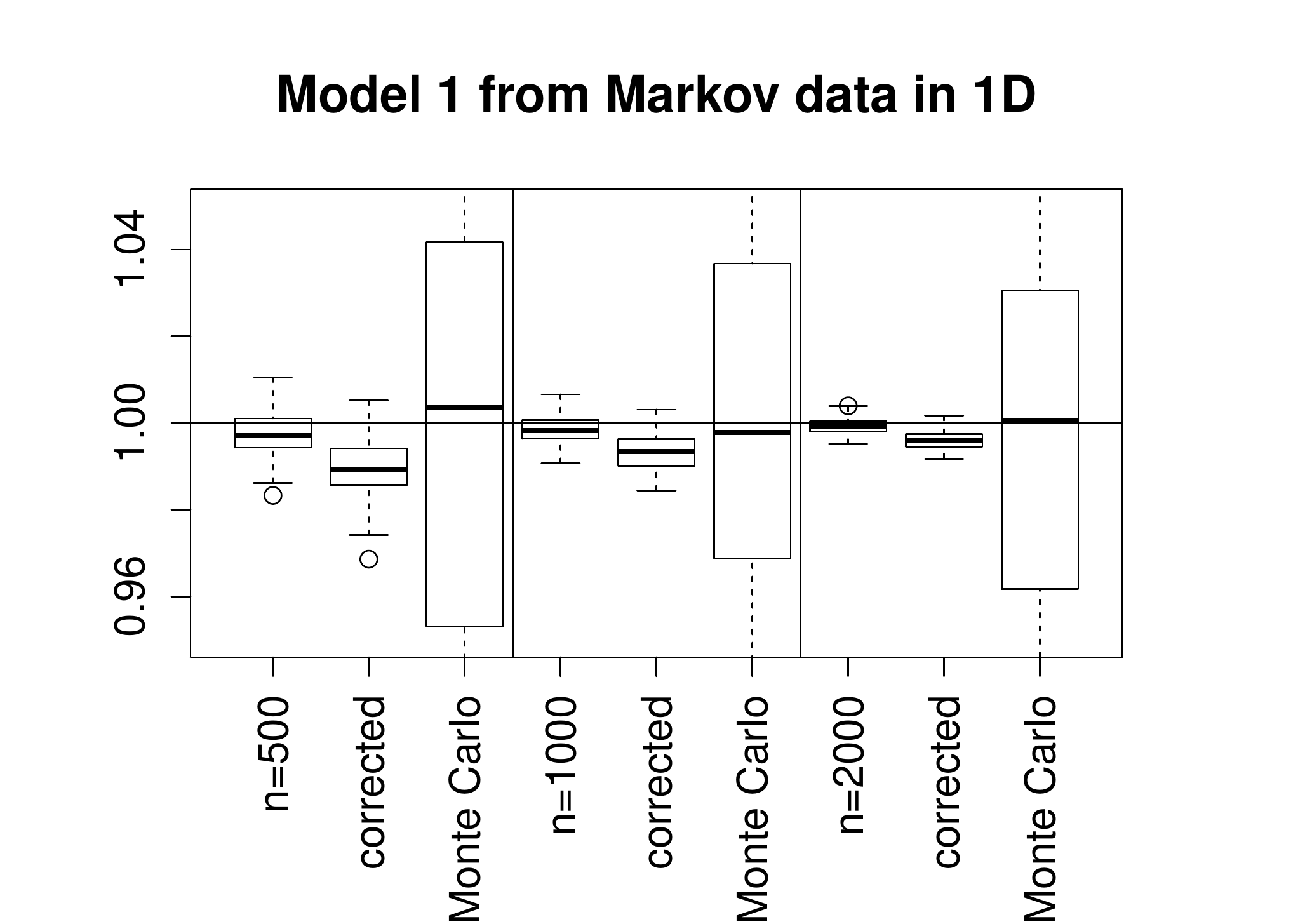} \\
   \includegraphics[width=0.5\textwidth]{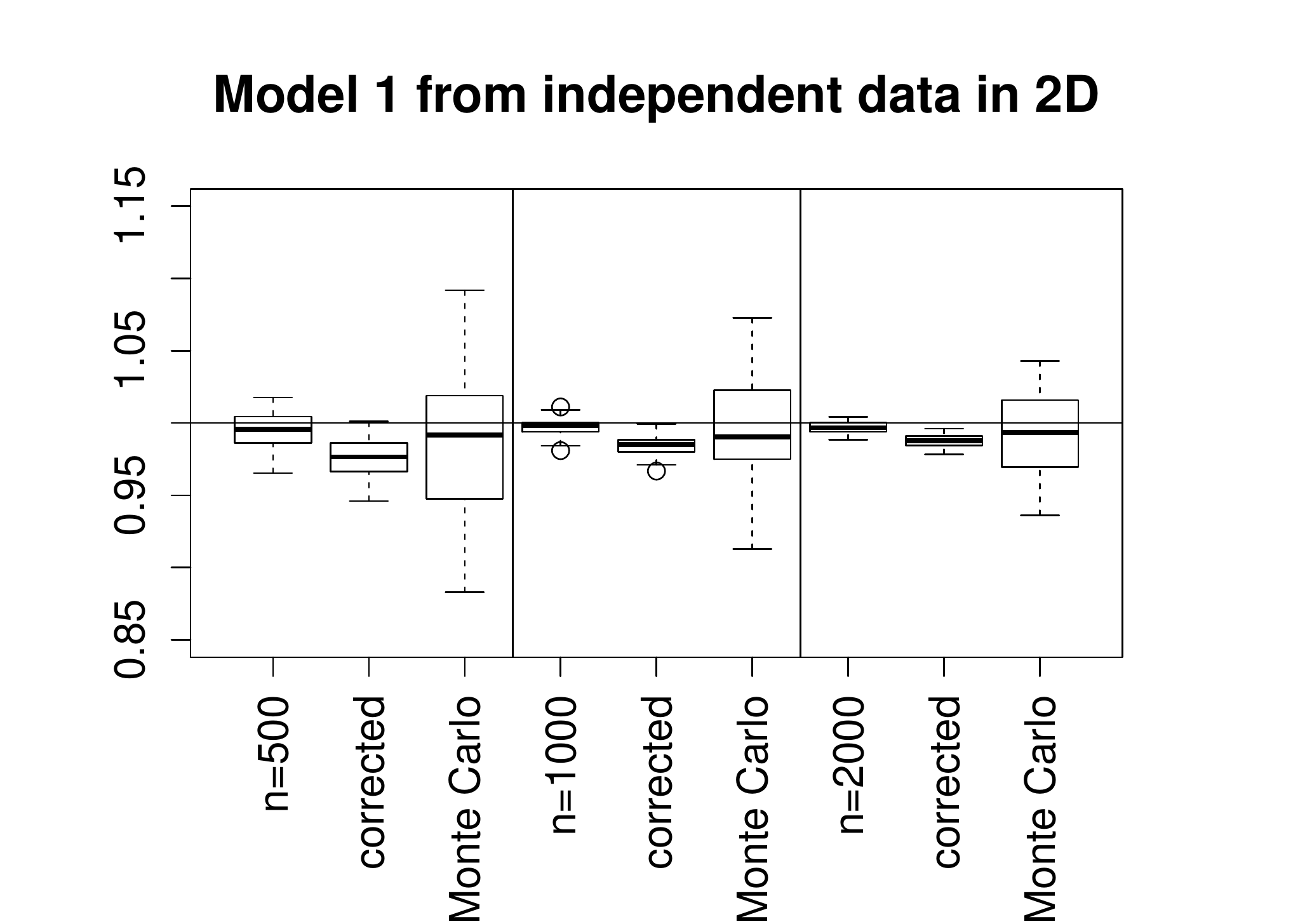} & \includegraphics[width=0.5\textwidth]{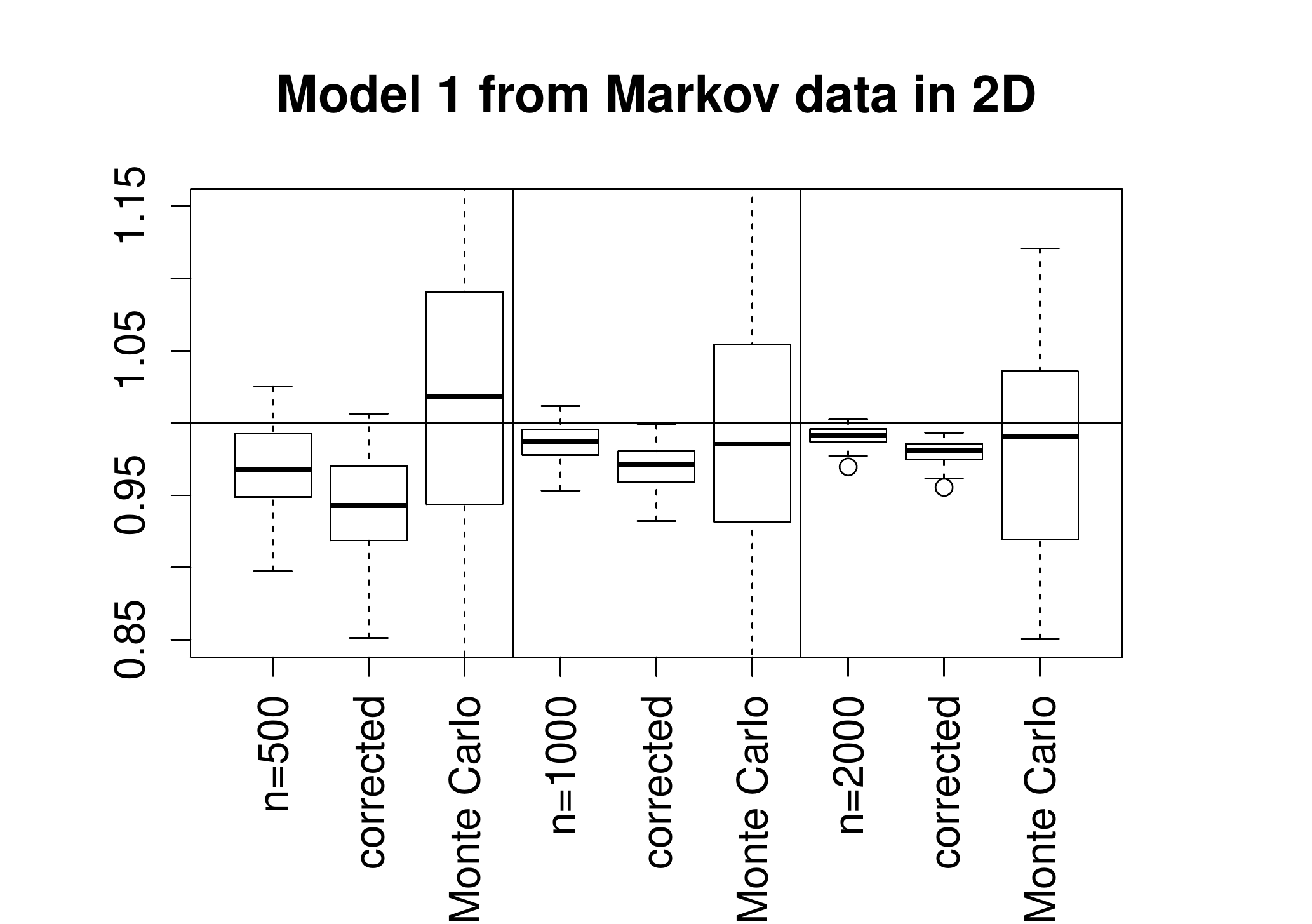} \\ 
   \includegraphics[width=0.5\textwidth]{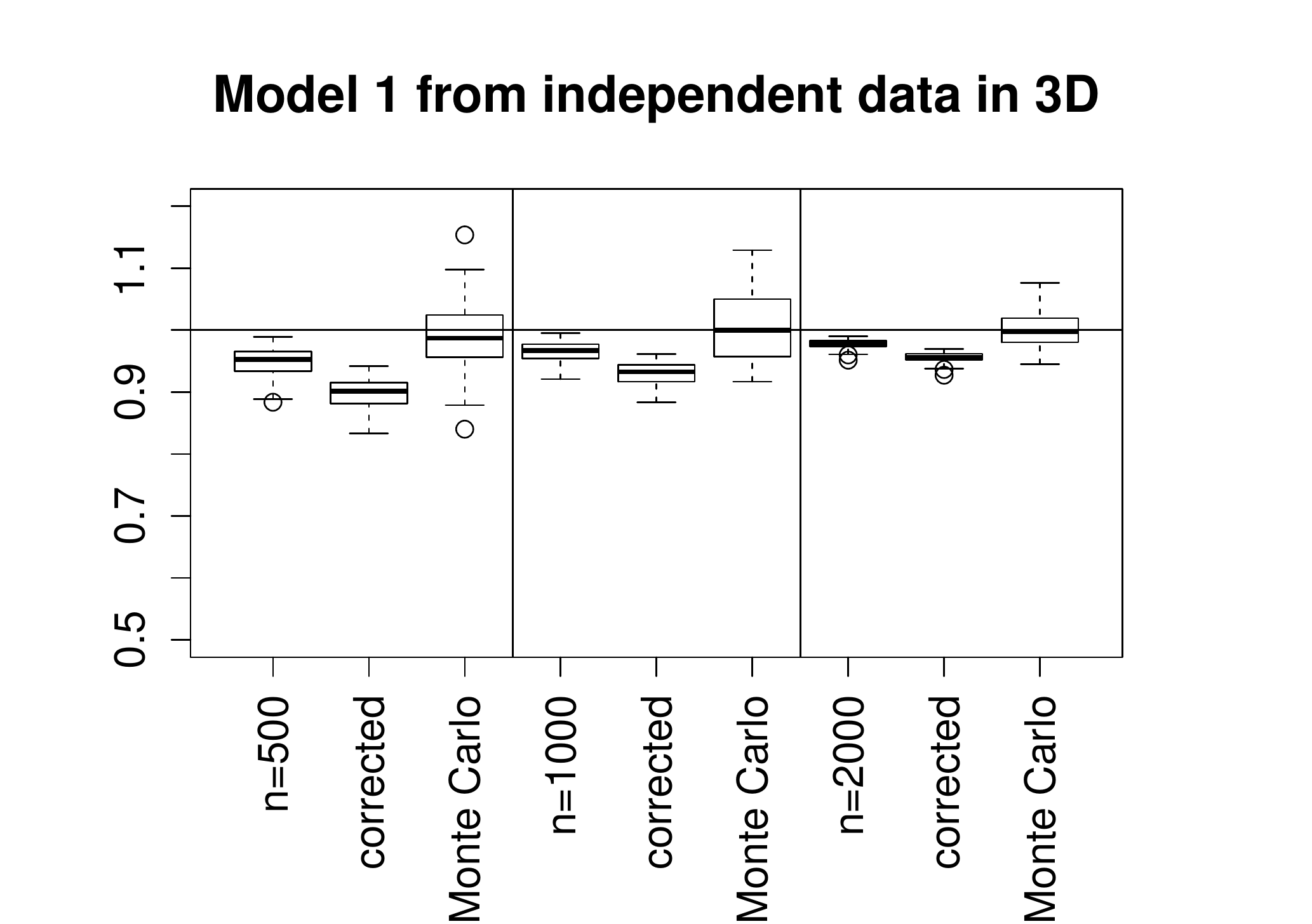} & \includegraphics[width=0.5\textwidth]{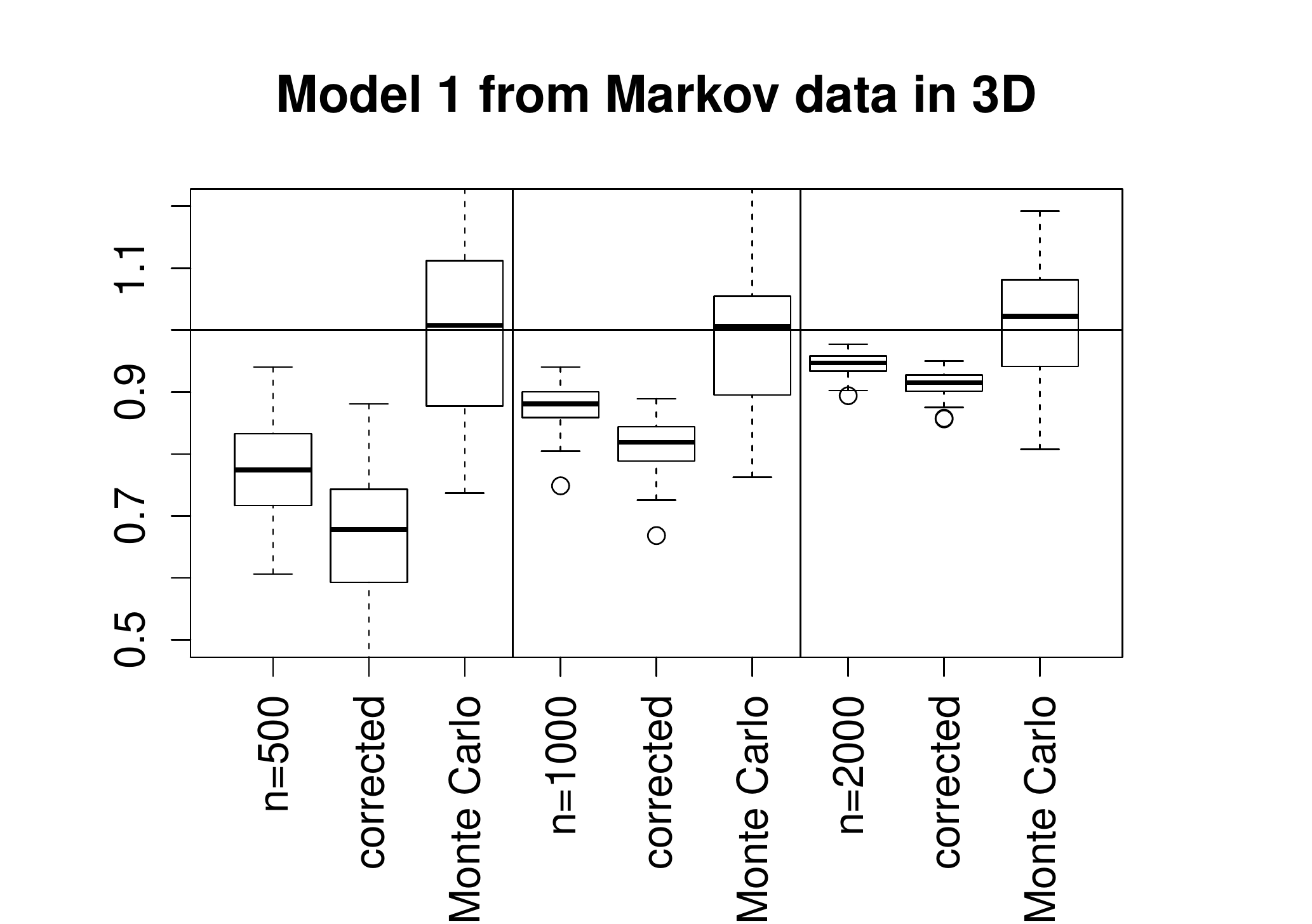} 
\end{tabular}
\caption{Boxplots of $\widehat{I}_{\text{ks}}$, $\widehat{I}_{\text{ks}}^c$ and $\widehat{I}_{\text{mc}}$ computed from $50$ replicates for model $\mathcal{M}_1$ in dimension $d=1$ (top), $d=2$ (middle) and $d=3$ (bottom) from independent data (left) and Markov data (right).}
\label{fig:boxplots:model1}
\end{figure}

\begin{figure}[p]
\centering
\begin{tabular}{cc}
   \includegraphics[width=0.5\textwidth]{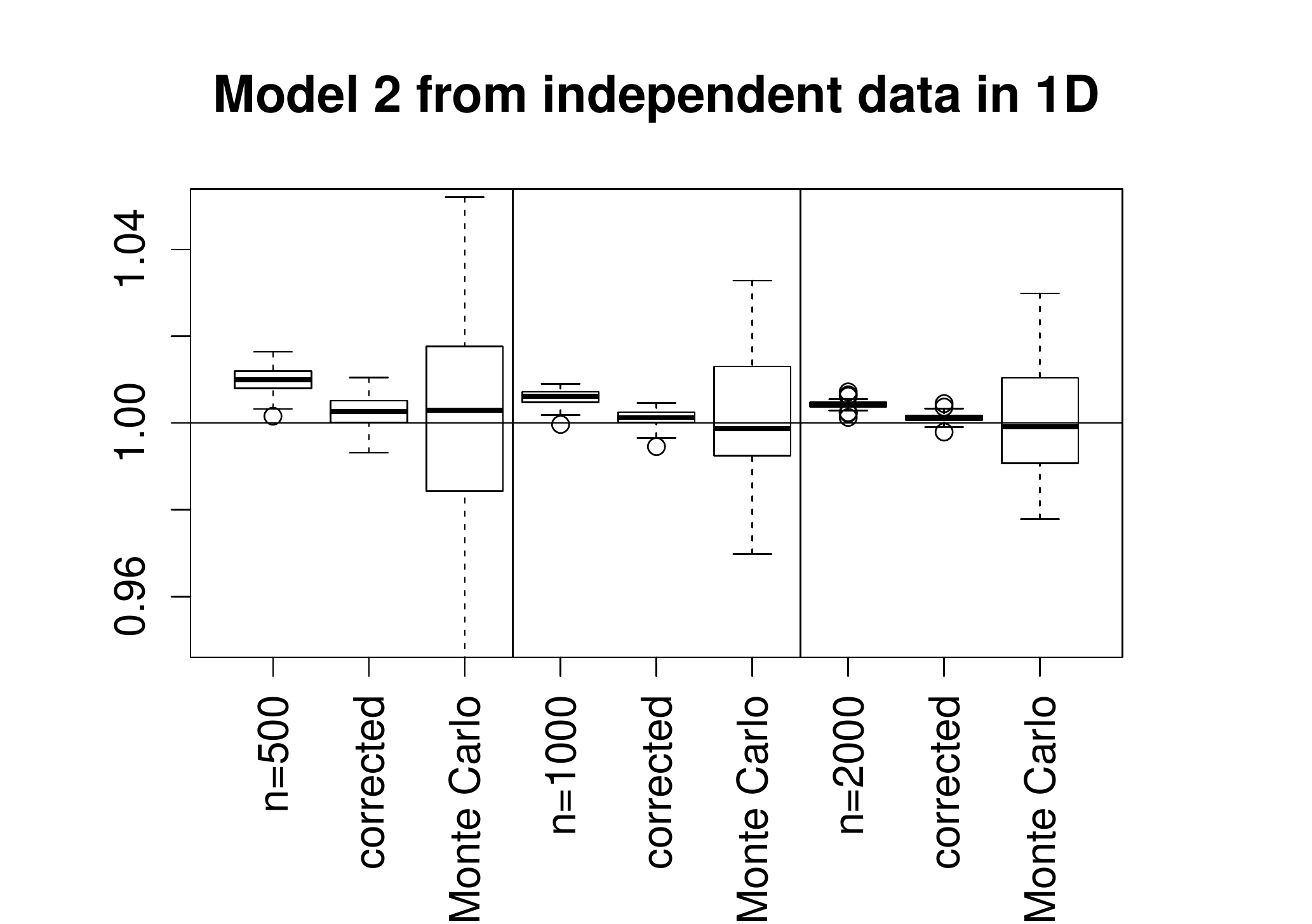} & \includegraphics[width=0.5\textwidth]{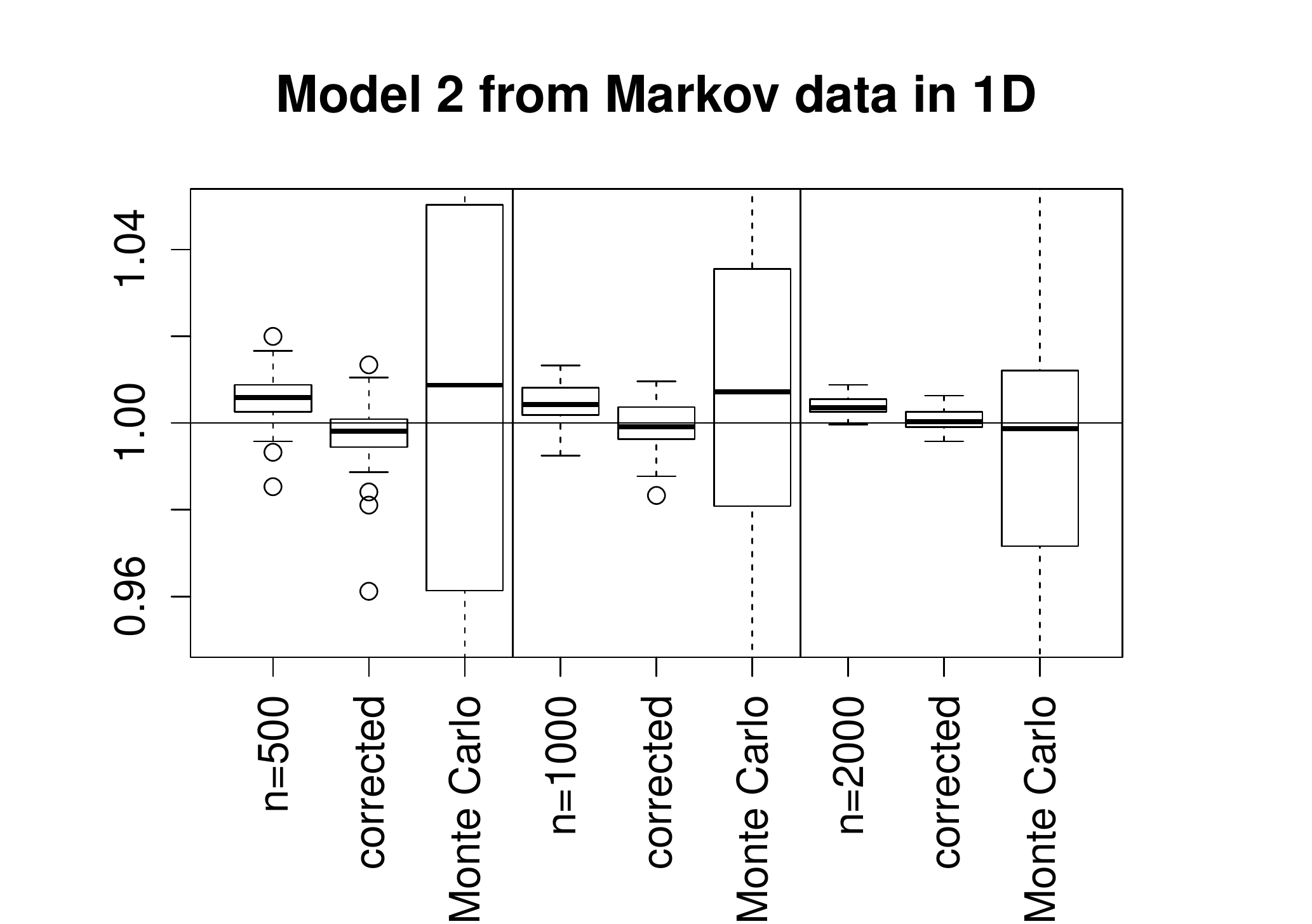} \\
   \includegraphics[width=0.5\textwidth]{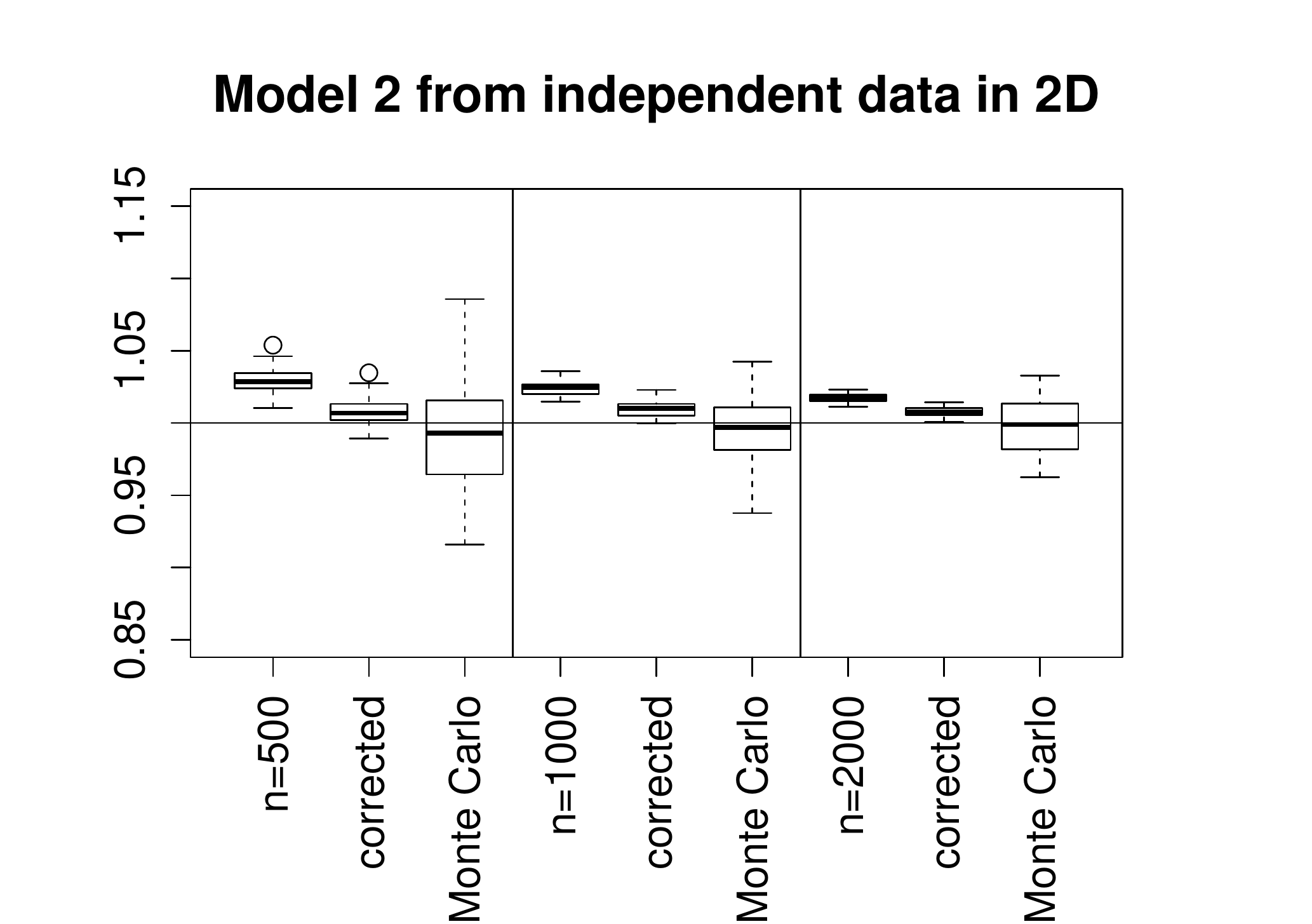} & \includegraphics[width=0.5\textwidth]{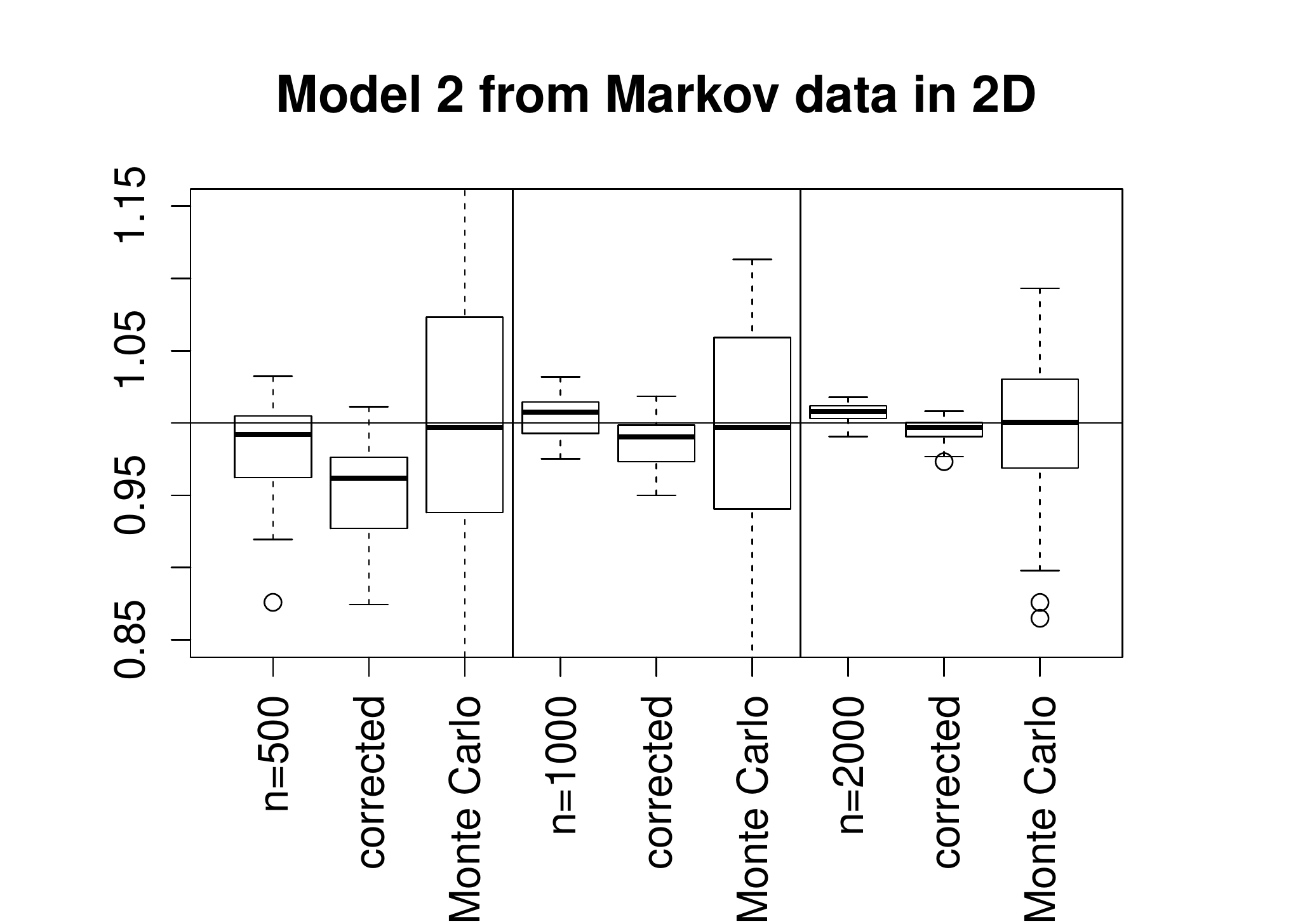} \\ 
   \includegraphics[width=0.5\textwidth]{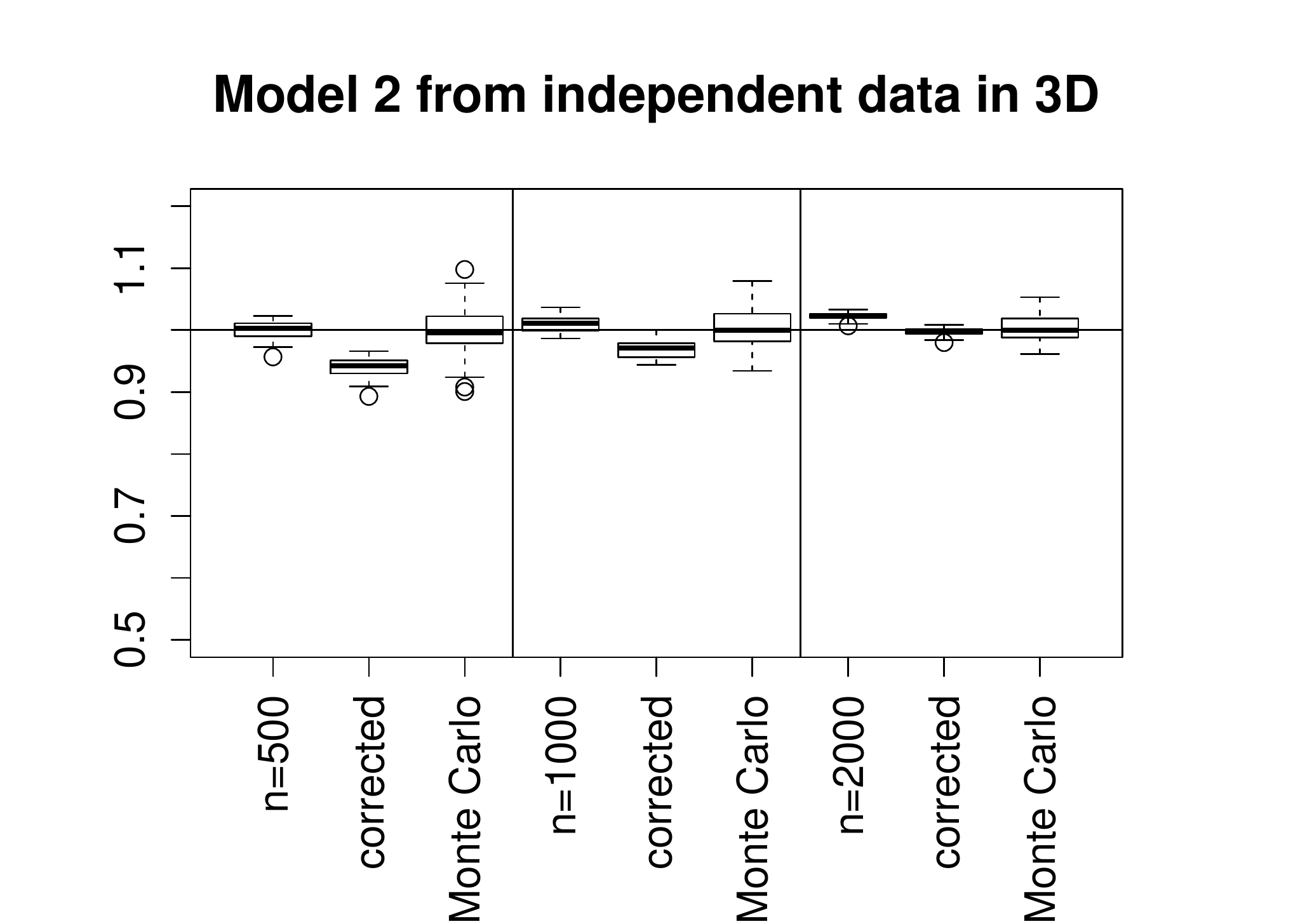} & \includegraphics[width=0.5\textwidth]{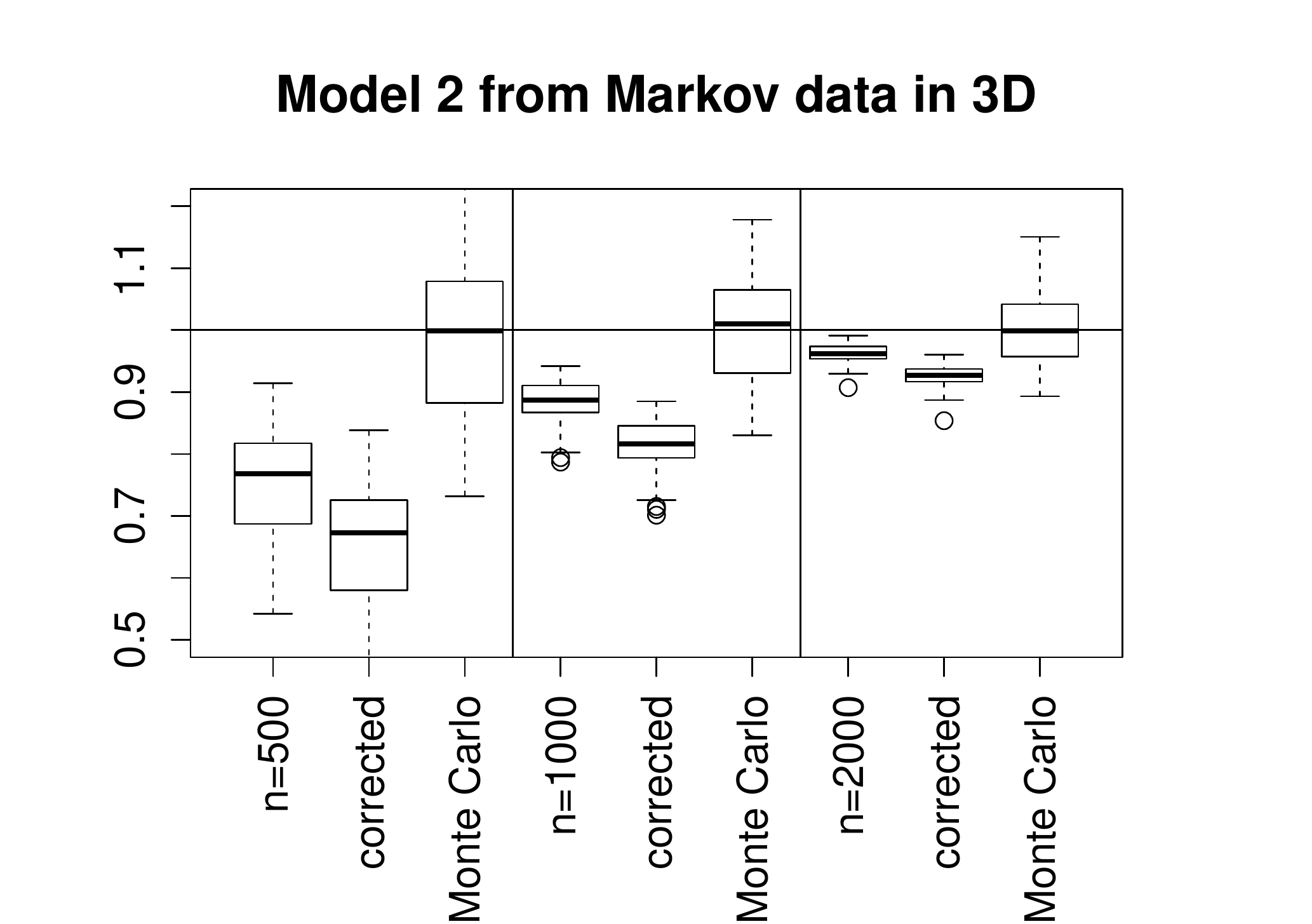} 
\end{tabular}
\caption{Boxplots of $\widehat{I}_{\text{ks}}$, $\widehat{I}_{\text{ks}}^c$ and $\widehat{I}_{\text{mc}}$ computed from $50$ replicates for model $\mathcal{M}_2$ in dimension $d=1$ (top), $d=2$ (middle) and $d=3$ (bottom) from independent data (left) and Markov data (right).}
\label{fig:boxplots:model2}
\end{figure}

\begin{figure}[p]
\centering
\begin{tabular}{cc}
   \includegraphics[width=0.5\textwidth]{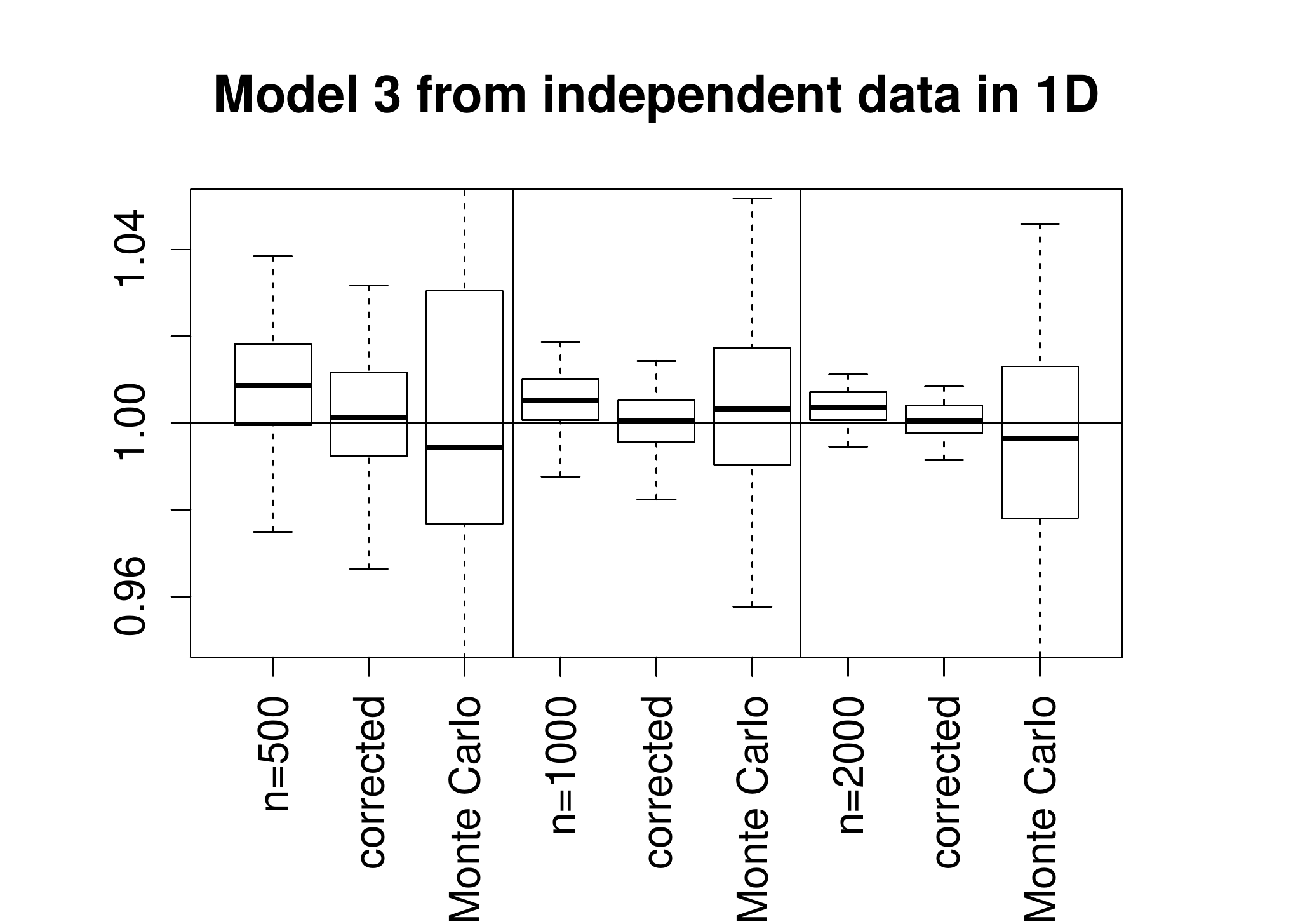} & \includegraphics[width=0.5\textwidth]{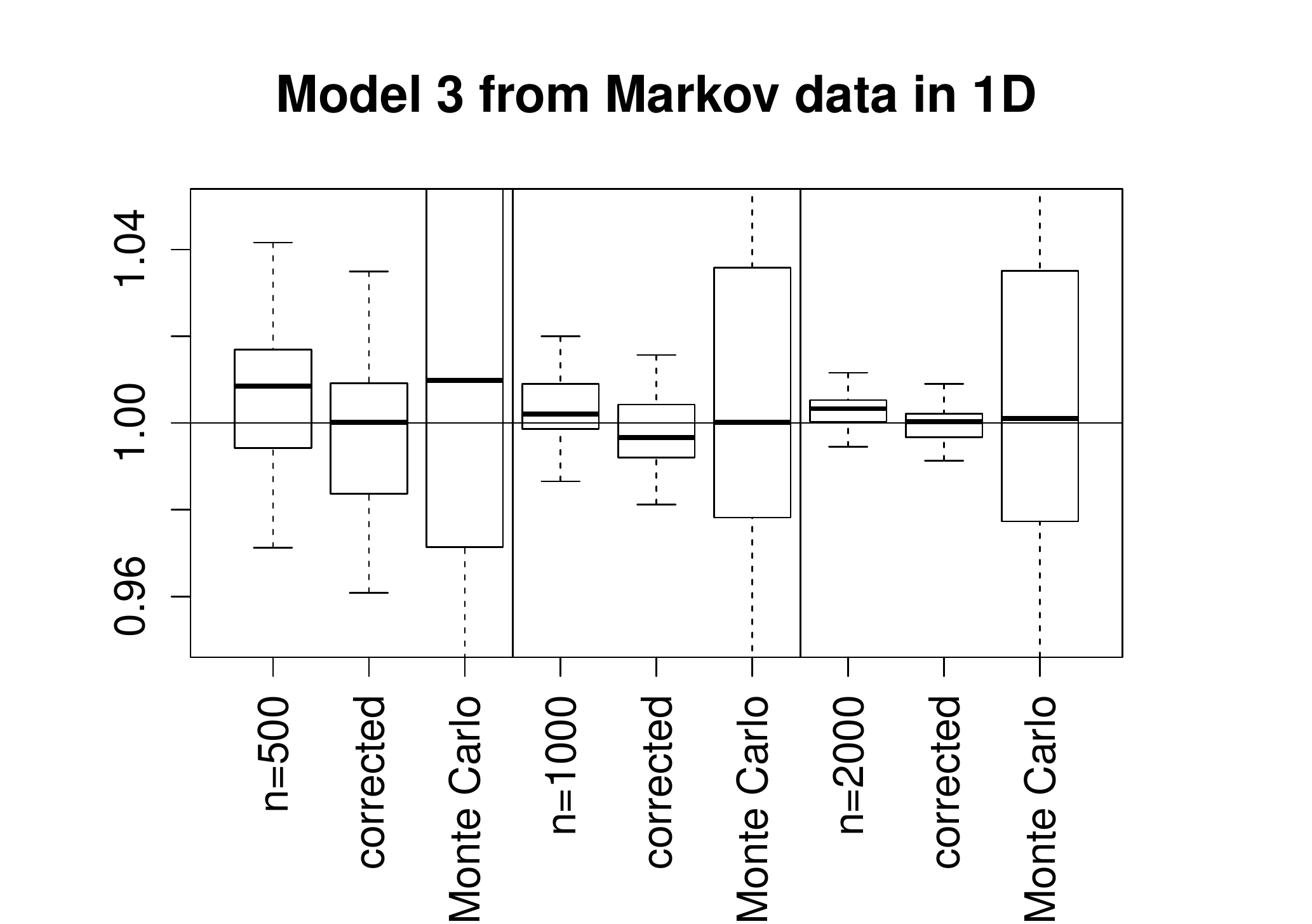} \\
   \includegraphics[width=0.5\textwidth]{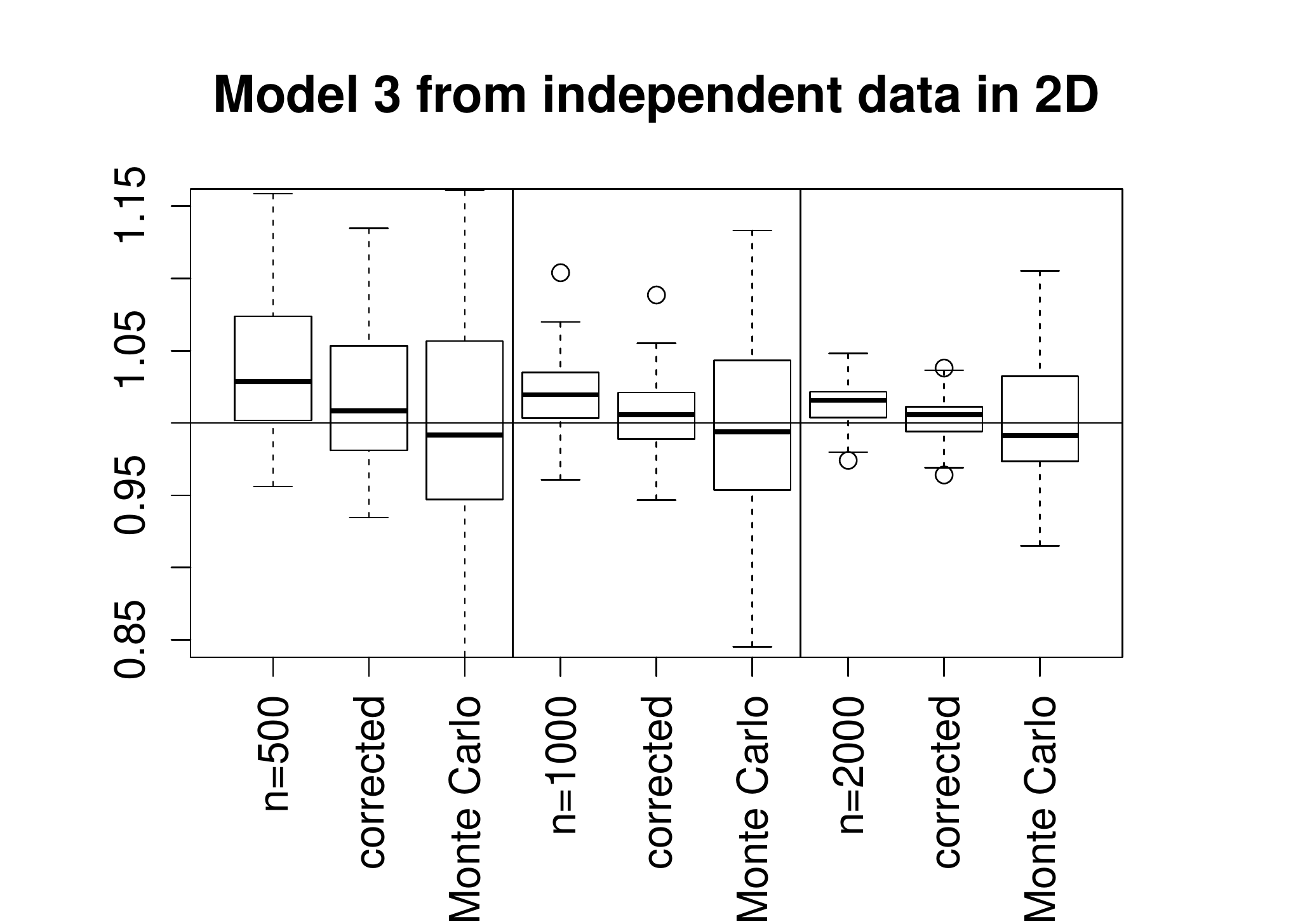} & \includegraphics[width=0.5\textwidth]{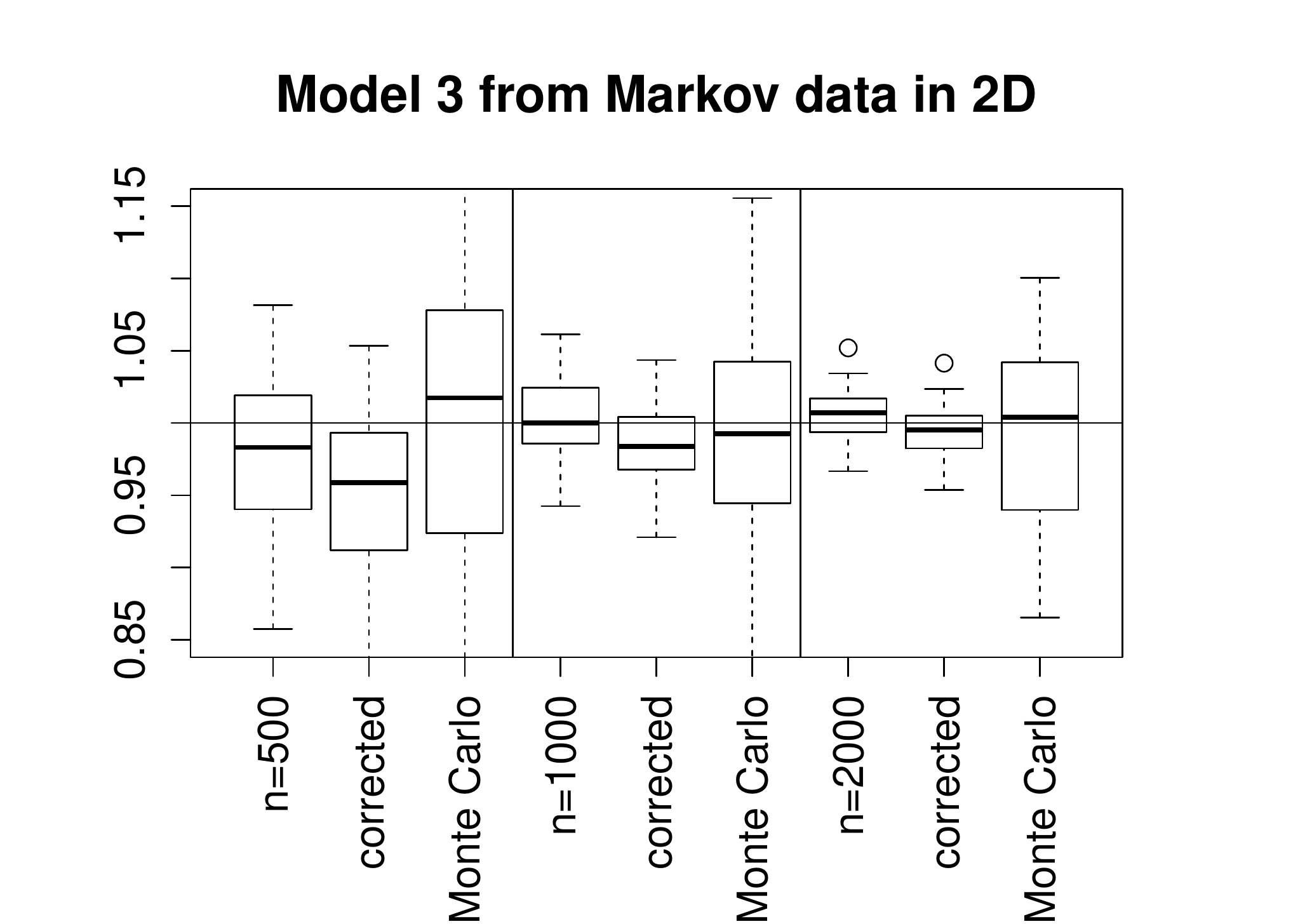} \\ 
   \includegraphics[width=0.5\textwidth]{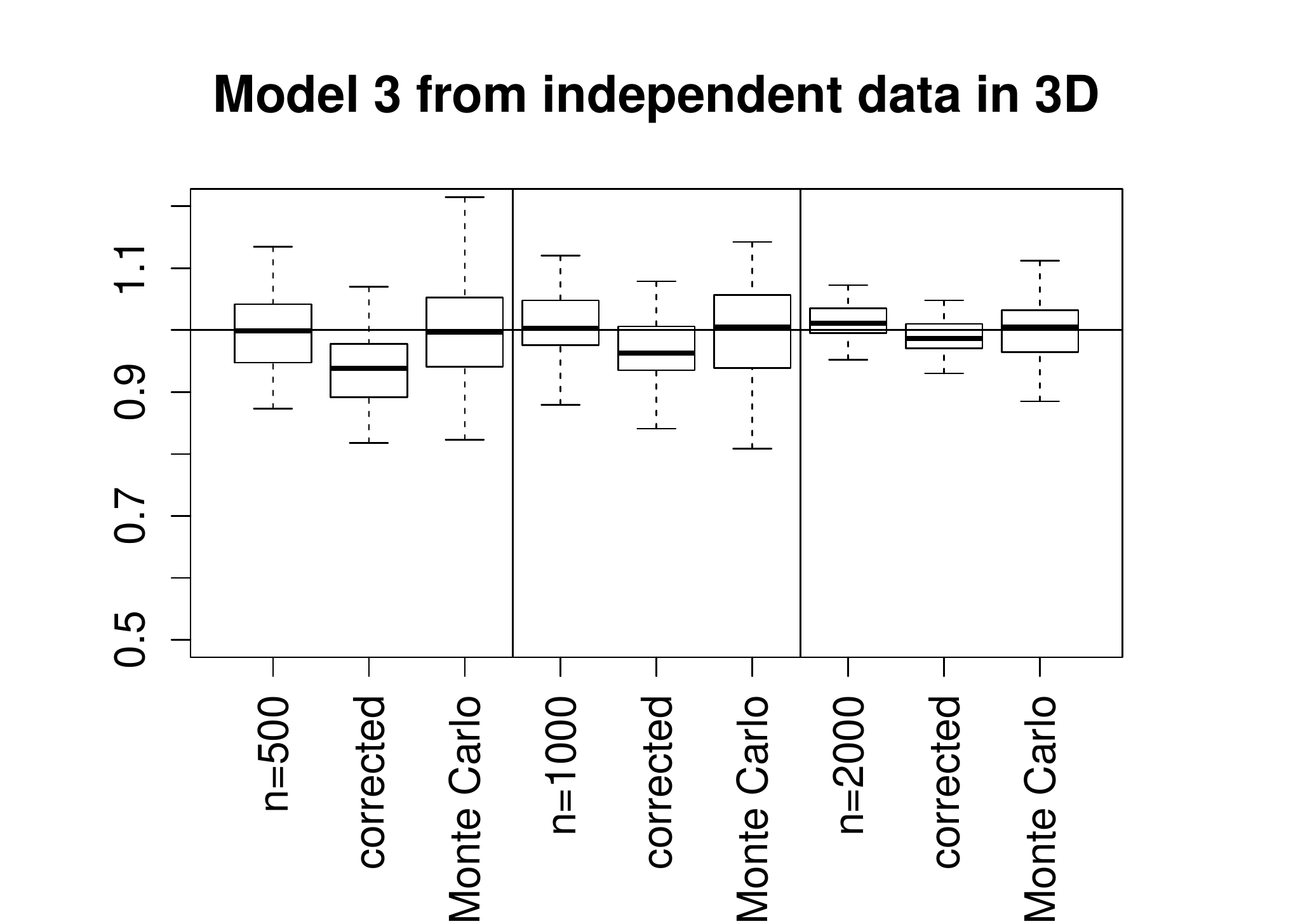} & \includegraphics[width=0.5\textwidth]{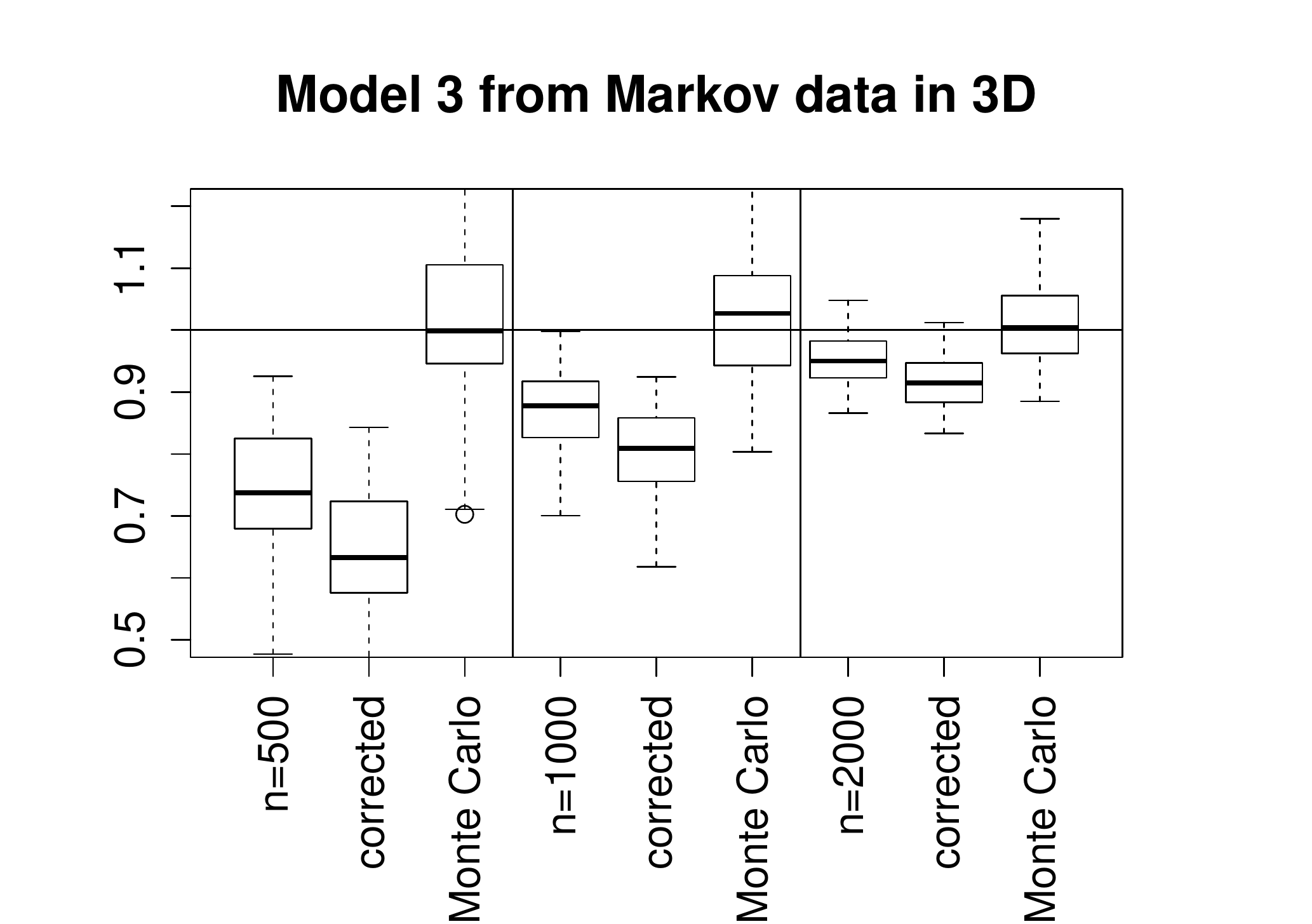} 
\end{tabular}
\caption{Boxplots of $\widehat{I}_{\text{ks}}$, $\widehat{I}_{\text{ks}}^c$ and $\widehat{I}_{\text{mc}}$ computed from $50$ replicates for model $\mathcal{M}_3$ in dimension $d=1$ (top), $d=2$ (middle) and $d=3$ (bottom) from independent data (left) and Markov data (right).}
\label{fig:boxplots:model3}
\end{figure}

First, estimators $\widehat{I}_\text{ks}$ and $\widehat{I}_\text{ks}^c$ have similar dispersions, but the corrected version is more accurate in most cases and should be promoted. Unsurprisingly, the results are better in terms of bias and variance when estimation of the design distribution is computed from independent data rather from Markov data. In addition, the accuracy deteriorates when the dimension increases.
For too small samples, the integral is underestimated (see for example models $\mathcal{M}_1$ and $\mathcal{M}_2$ in dimensions $2$ and $3$), in particular in the Markov framework (see model $\mathcal{M}_3$ in dimension $3$).
Numerical results are quite similar for models $\mathcal{M}_1$ and $\mathcal{M}_2$, which states that the method is not sensitive to skewness. As expected, quality is a little lower for $\mathcal{M}_3$. In the $3$ considered models, the Monte Carlo estimator $\widehat{I}_\text{mc}$ presents no bias but a large dispersion in comparison with $\widehat{I}_\text{ks}$ and $\widehat{I}_\text{ks}^c$, especially in the Markov framework where the dataset does not exactly follow the distribution $\pi$.
The numerical study shows that the methodology is very efficient and applicable in various contexts, in particular compared to Monte Carlo methods that achieve worse results in terms of variance and can not be applied in a statistical framework. Nevertheless, additional numerical experiments point out that both estimators present some bias when function $\varphi$ is not continuous.

As stated in Theorem \ref{th:bigOpth}, the shape of the function $\pi$ (and secondarily $\varphi$) plays an important role in the convergence rate of $\widehat I_{\text{ks}}$: the smoother the better. Hence, the situation when $\pi$ is the uniform density on $Q$ is far from being easy (as the function is not even continuous).
Continuity of $\pi$ is no remedy as it implies the cancellation of $\pi$ at the border and therefore provides too few points near the border. One solution is to consider points that lie slightly outside $Q$, say in $ \tilde Q\supset Q$, in order to stabilize the estimation of $\pi$ at the border of $Q$.  Then compute the kernel estimator $\tilde \pi$ using all these points, and finally calculate
\begin{align*}
\tilde I_{\text{ks}} =  n^{-1} \sum_{i=1}^n \frac{\varphi(X_i)\mathbb{1}_{\{X_i\in Q\}}}{\tilde \pi(X_i) }.
\end{align*}
In the applications where only points in $Q$ are given, one might prefer to consider a different set $Q$, slightly smaller than the original, in order to implement the previous method. If collecting the points has not been done, it might be appropriate to allow the sensor capturing data to get out of $Q$.

\subsection{Real data analysis}

The U.S. National Centers for Environmental Information (NCEI) are parts of National Oceanic and Atmospheric Administration (NOAA). NCEI form the world's largest provider of weather and climate data. The real data analysis presented in the present paper is based on sea surface temperatures obtained all around the world between 2005 and 2015 from profiling floats (PFL dataset) and available on NCEI's website\footnote{World Ocean Database Search and Select (last consulted in July 2016): \url{https://www.nodc.noaa.gov/cgi-bin/OC5/SELECT/builder.pl}}. Sea surface temperatures have a large influence on climate and weather and are therefore used in analyses of climate change. The dataset investigated in this article contains about $1.3$M data and is fully described in Table \ref{tab:dataset} and Figure \ref{fig:worldmap}. Data preprocessing has been implemented in \verb+Python+, while estimation and data analysis have been made with \verb+R+.

\begin{table}[h]
\centering
\begin{tabular}{|c|c|c|c|c|c|}
\hline
\multicolumn{6}{|c|}{Total}\\
\multicolumn{6}{|c|}{1\,343\,094 }\\ \hline\hline
\multicolumn{6}{|c|}{Ocean}\\ \hline
\multicolumn{2}{|c|}{Pacific} & \multicolumn{2}{c|}{Atlantic} & \multicolumn{2}{c|}{Indian}\\
\multicolumn{2}{|c|}{727\,135} & \multicolumn{2}{c|}{336\,180} & \multicolumn{2}{c|}{279\,779 } \\ \hline \hline
\multicolumn{2}{|c|}{Year} & \multicolumn{2}{c|}{Year} & \multicolumn{2}{c|}{Year} \\ \hline
2005 (min) & 2015 (max) & 2005 (min) & 2015 (max) &2005 (min) &2015 (max) \\
35\,773 & 86\,961 &  16\,242 & 45\,488 & 14\,134 & 33\,049 \\
\hline
\end{tabular}
\caption{Size of the sub-datasets extracted from PFL dataset between 2005 and 2015.}
\label{tab:dataset}
\end{table}


\begin{figure}
\centering \includegraphics[width=0.5\textwidth]{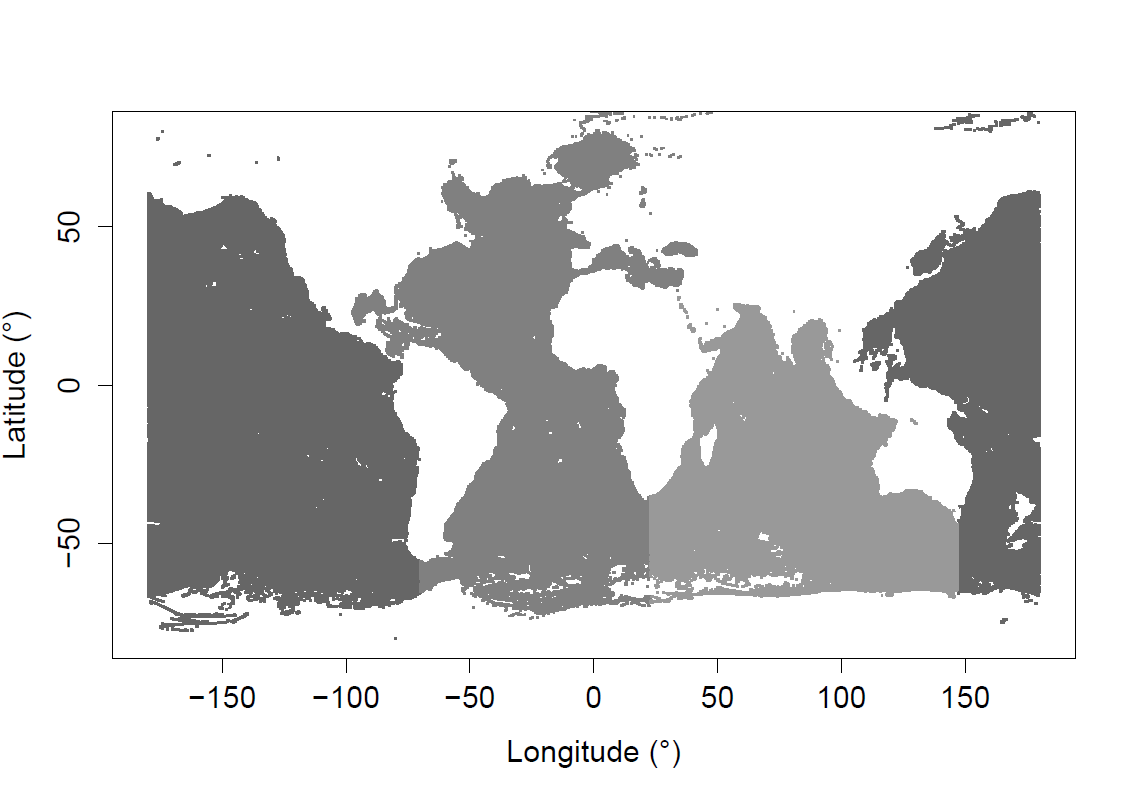}
\caption{Visualization of the 1\,343\,094 points of PFL dataset between 2005 and 2015. Oceans are distinguished using gray shades (darker to lighter: Pacific, Atlantic, Indian).}
\label{fig:worldmap}
\end{figure}

The database of interest consists of spatiotemporal data obtained from measure instruments with unpredictable trajectories, which makes them hardly tractable. We focus here on the estimation of the average sea surface temperature for a given period of time, between 2005 and 2015, and for some given areas in the 3 major oceans. Areas considered in this paper are delimited by the latitude: more than $50^\circ$ (North region), $[30^\circ,50^\circ]$, $[10^\circ,30^\circ]$ (North Tropical region), $[-10^\circ,10^\circ]$ (Equatorial region), $[-30^\circ,-10^\circ]$ (South Tropical region), $[-30^\circ,-50^\circ]$ and less than $-50^\circ$ (South region). For each mentioned spatial region, we have estimated the average sea surface temperature over each month by the corrected algorithm presented in section \ref{ss:algo}. This technique is fully adapted to the problem at hand because measurement locations follow erratic trajectories with unknown distribution.

Local average sea surface temperatures for the 3 oceans are presented in Figure \ref{fig:profils}. One obtains temperature patterns according to the location on the North-South axis. One may observe that the variability of sea surface temperatures in a given region over $11$ years is weak compared to the variations in latitude, especially for the Pacific Ocean. In other words, the temperature mainly depends on the latitude, rather on the period of the year. Unsurprinsingly, sea surface temperatures are the highest under the Equator and near the Tropics, where Earth receives the most direct sunlight.

In Figure \ref{fig:tempo}, we present time series over $11$ years of average sea surface temperatures in $3$ regions: South Tropical Pacific Ocean (latitude between $-30^\circ$ and $-10^\circ$), North Atlantic Ocean (latitude between $50^\circ$ and $60^\circ$) and Equatorial Indian Ocean (latitude between $-10^\circ$ and $10^\circ$). First, it should be noted that we observe an expected seasonal effect on sea surface temperatures of South Pacific and North Atlantic Oceans: the highest temperatures occur in January and February in the Southern Hemisphere, while they occur in August and September in North Atlantic Ocean. In addition, we note a general decrease in sea surface temperature in Southern Pacific between 2006 and 2009 followed by a stable period. This phenomenon has been taken into account in simulations proposed in \cite{pac}. In particular, they show that recent cooling in Pacific Ocean is tied to recent global-warming hiatus. One may also remark that temperature in North Atlantic Ocean has decreased recently. Indeed, there is a region of cooling in the Northern Atlantic. \cite{ncc} suggest that this cooling may be due to changes in the Atlantic meridional overturning circulation in the late twentieth century. Finally we point out that Equatorial Indian Ocean has tended to warm for at least $10$ years. According to \cite{kollroxy}, this warming begun more than a century ago and is linked to the El Ni\~{n}o -- Southern Oscillation periodical phenomenon.

\begin{figure}[h!]
\centering
   \includegraphics[width=0.33\textwidth,height=7cm]{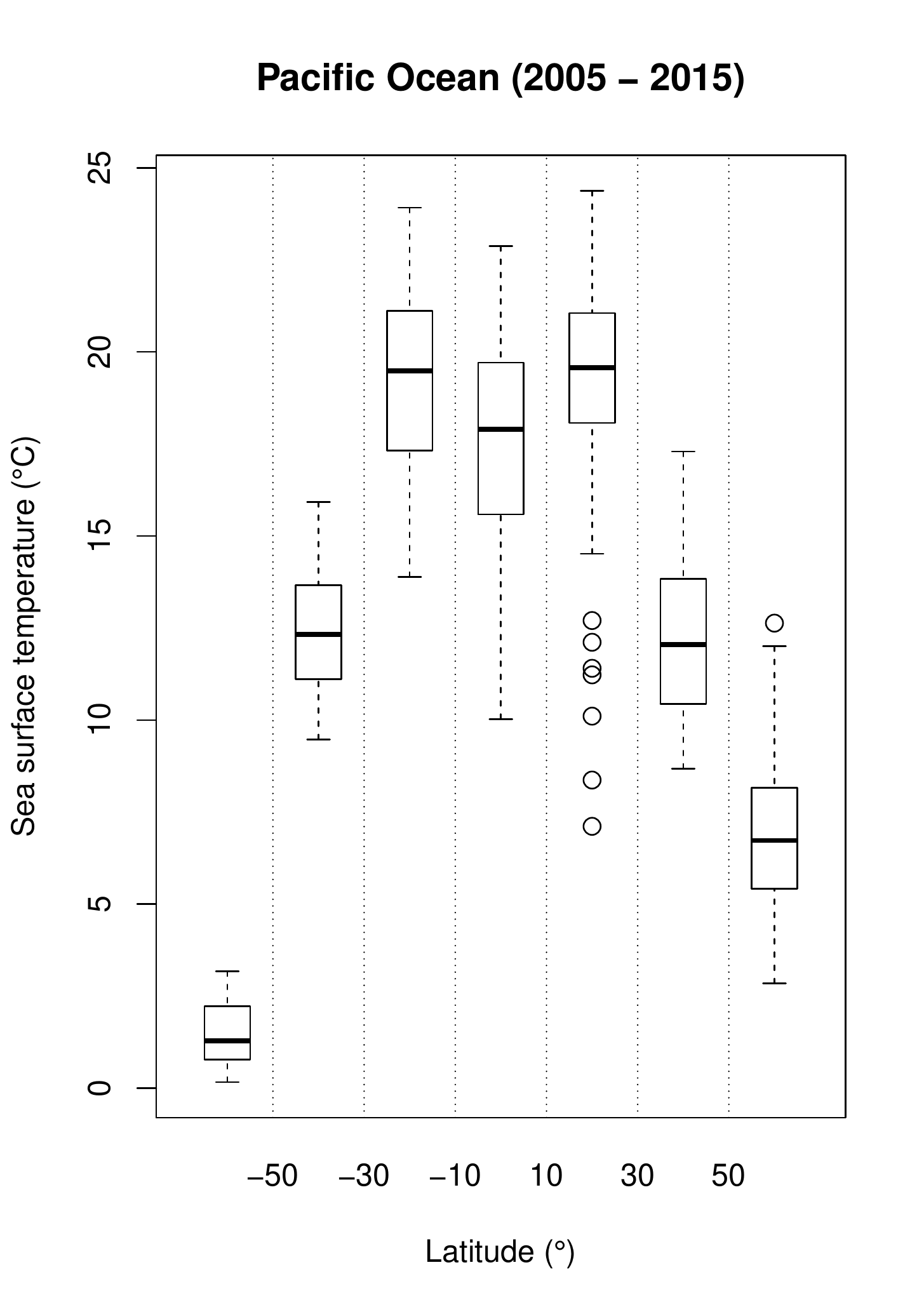}\includegraphics[width=0.33\textwidth,height=7cm]{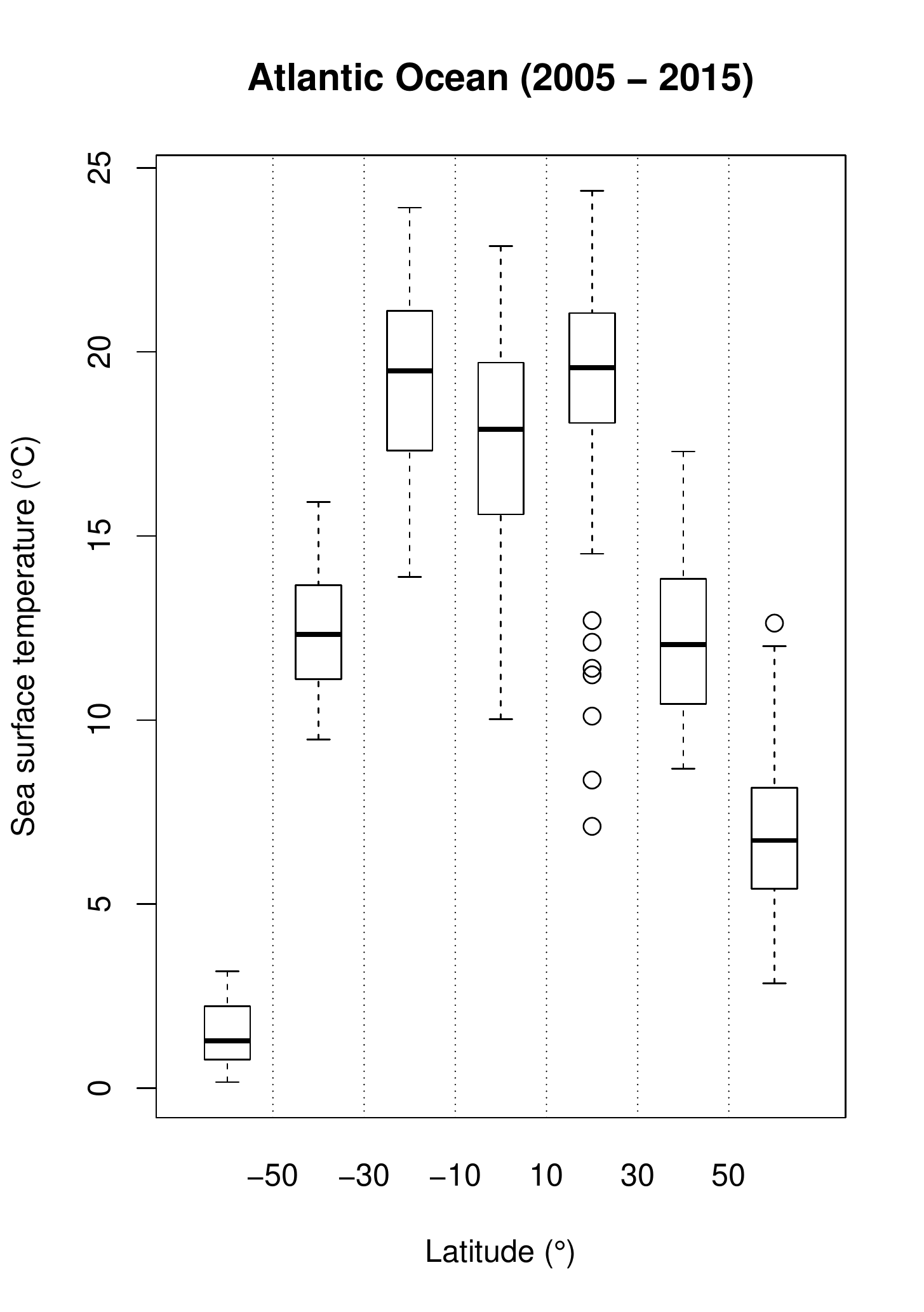}\includegraphics[width=0.33\textwidth,height=7cm]{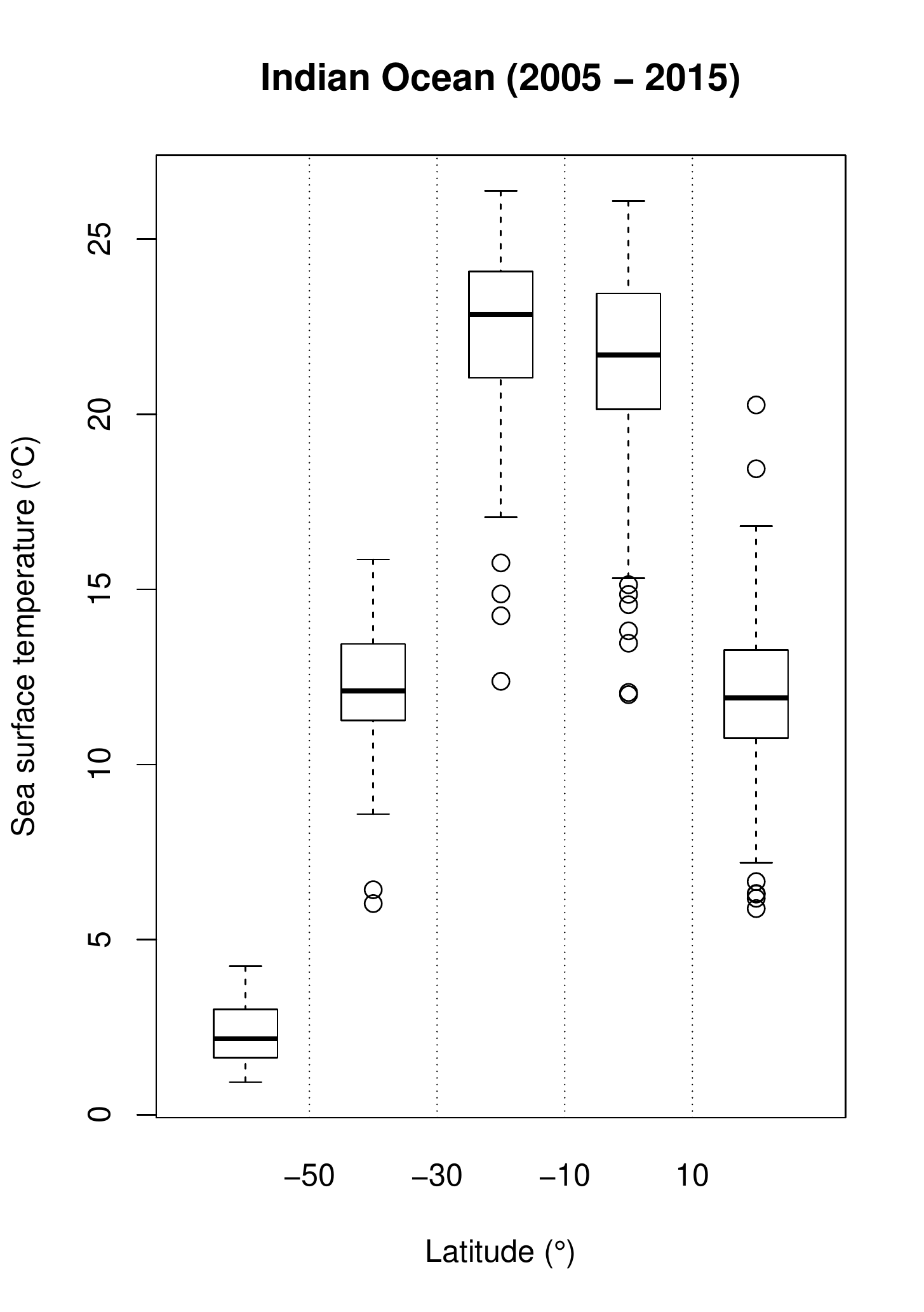}
\caption{Average sea surface temperatures according to the latitude of the considered area for the 3 major oceans. Each boxplot has been computed from $11\times12=132$ estimates of the average temperature for each month of each year between 2005 and 2015.}
\label{fig:profils}
\end{figure}

\begin{figure}[h!]
\centering
   \includegraphics[width=0.33\textwidth,height=7cm]{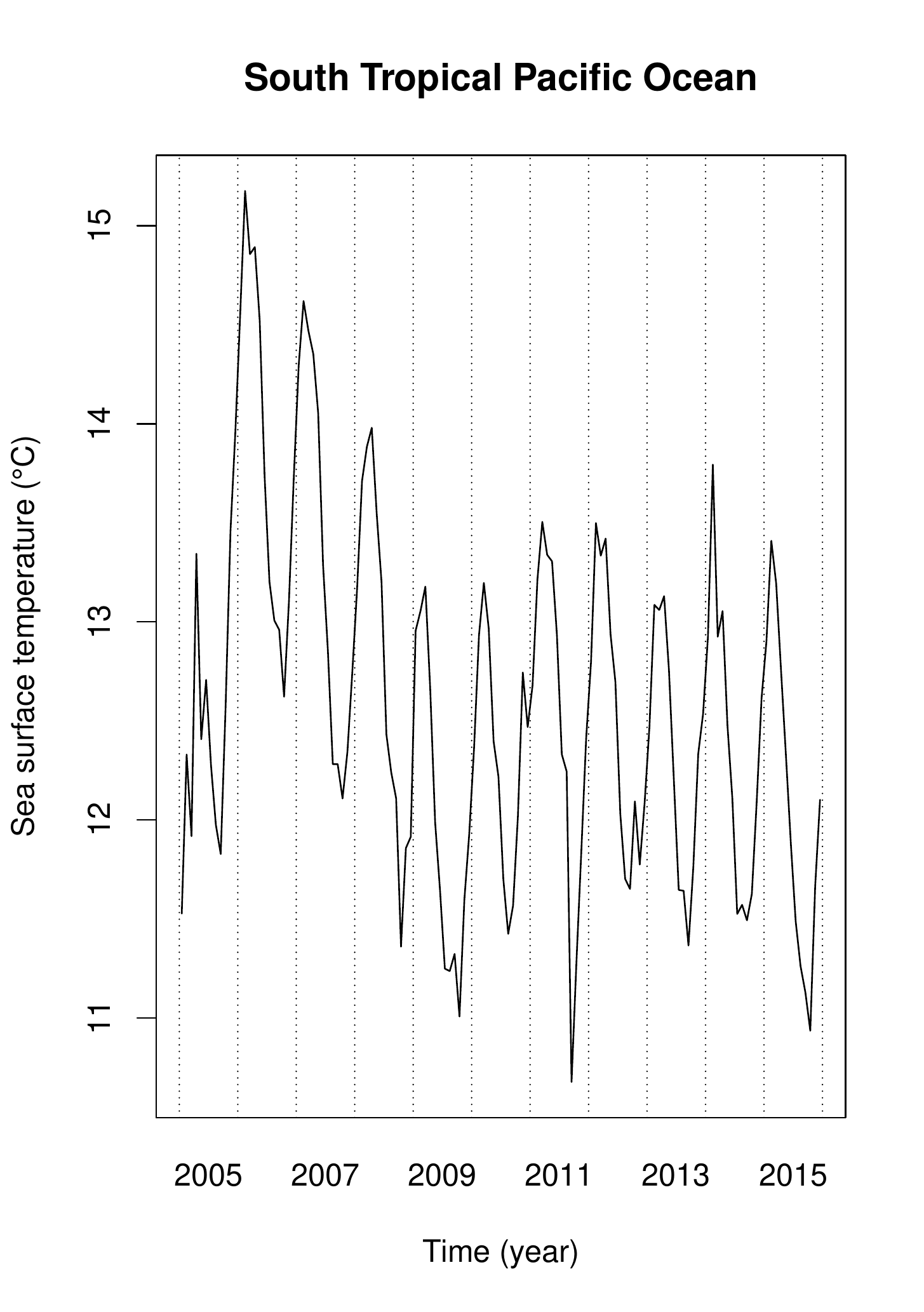}\includegraphics[width=0.33\textwidth,height=7cm]{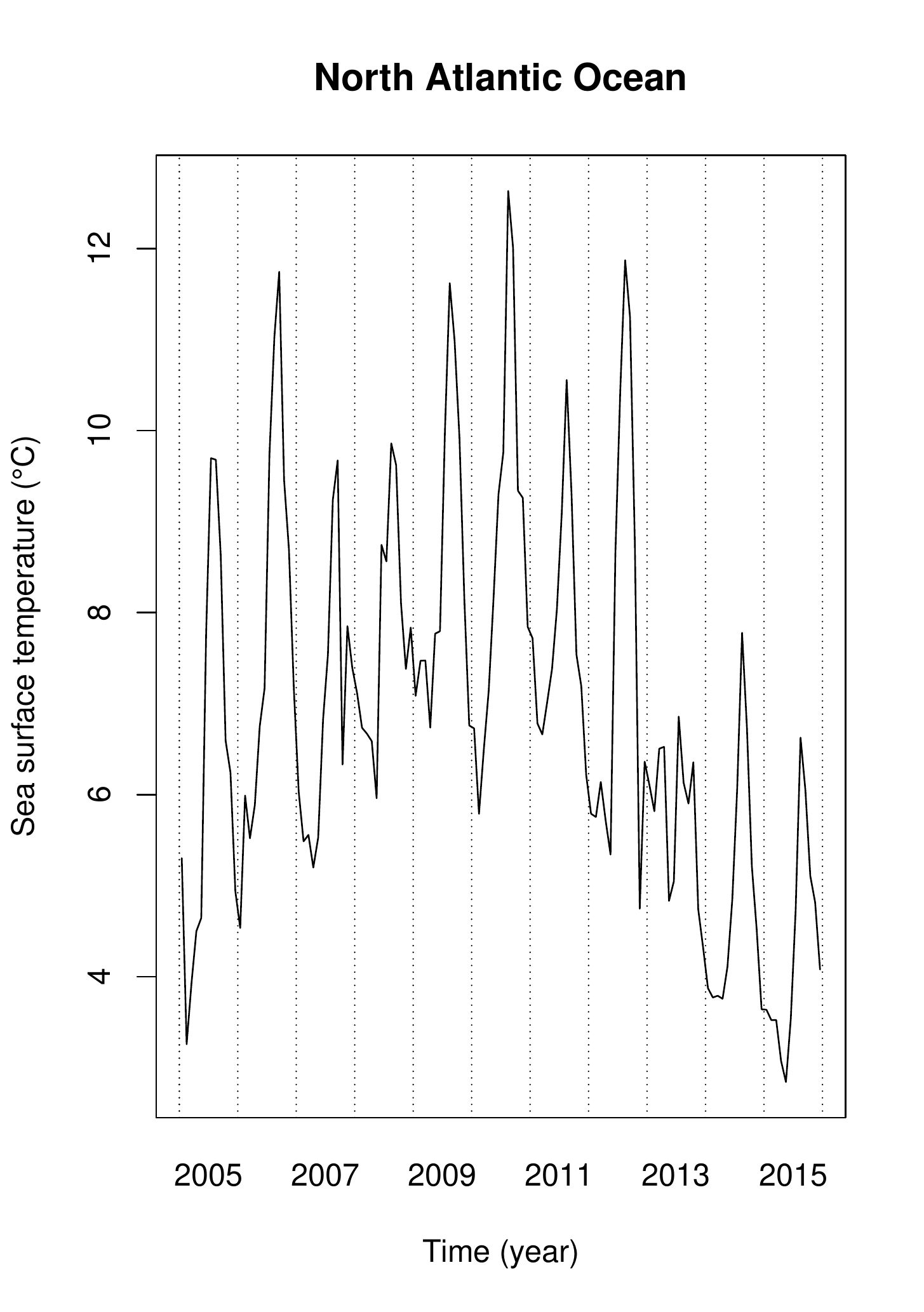}\includegraphics[width=0.33\textwidth,height=7cm]{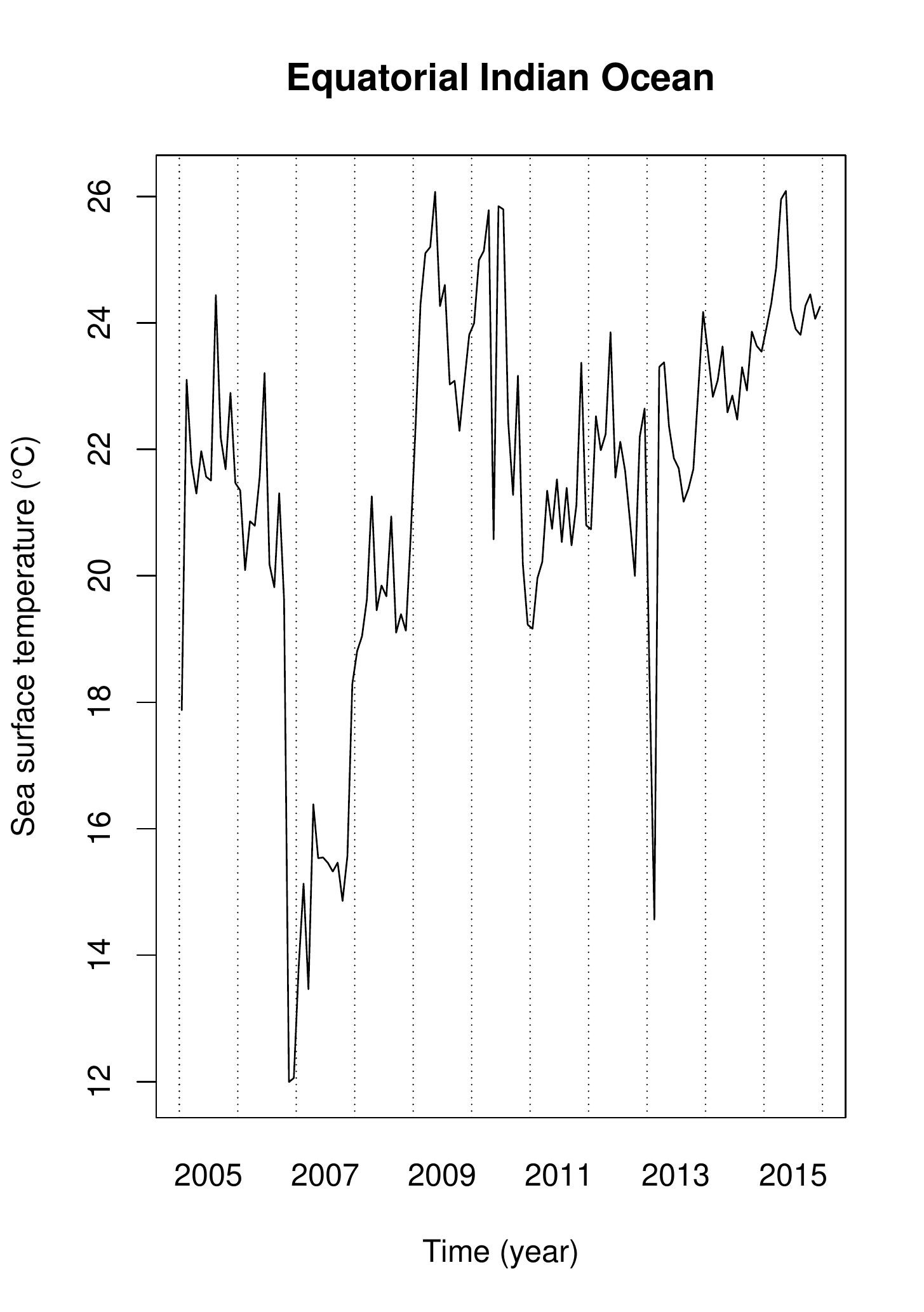}
\caption{Times series of sea surface temperature in some specific areas of the 3 major oceans between 2005 and 2015. Latitude between $-30^\circ$ and $-10^\circ$ for South Tropical Pacific Ocean (top), $50^\circ$ and $60^\circ$ for North Atlantic Ocean (middle), and $-10^\circ$ and $10^\circ$ for Equatorial Indian Ocean (bottom).}
\label{fig:tempo}
\end{figure}


\begin{appendices}



\section{Regeneration-based bounds for expectations}\label{s31}

We employed the Nummelin splitting technique in order to exploit the independence between the blocks $B_k$, 
$k\in \mathbb N^*$, as described in section \ref{s2} of the associated paper. We have however taken care of giving conditions on the moments $\tau_A$ of the original 
chain $(X_i)_{i\in \mathbb N}$ rather than on the moments $\theta_a$ of 
the split chain $(Z_i)_{i\in \mathbb N} $.

Define, for any $p>0$,
\begin{align*}
\xi(p)= \sup_{x\in A}\mathbb {E}_x[\tau_A^p ]  .
\end{align*}
We start with a lemma relating moments of $\theta_a$ to moments of $\tau_A$.

\begin{lemma}\label{lemma:tau} Let $(X_i)_{i\in \mathbb N}$ be a Markov chain satisfying (\ref{ren1}). Then, for any $x_0 \in\EuScript E$, $p\geqslant 1$,
\begin{align}
&\mathbb {E}_{x_0}[\theta_a^{p}]^{1/p}
\leqslant \frac{1}{e^{\lambda_0/{p}}-1}\xi(p)^{1/p}+\mathbb {E}_{x_0}[\tau_A^{p}]^{1/p} \label{c0tau0} \\
&\mathbb {E}_a[ \theta_a^{p}]^{1/p}
\leqslant\lambda_0^{-1} \frac{ e^{\lambda_0/{p}}}{ e^{\lambda_0/{p}}-1} \xi(p)^{1/p}   .\label{c0tau0prime}
\end{align}
\end{lemma}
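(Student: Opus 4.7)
My plan is to reduce the moment bounds on $\theta_a$ to bounds on inter-visit times to $A$, by exploiting the geometric structure of the regeneration coin flips in the split chain.

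Let $T_0 = 0$, let $T_1 < T_2 < \cdots$ be the successive times at which the $X$-chain visits $A$, and set $W_k = T_k - T_{k-1}$ for the inter-visit times. By the split-chain construction (\ref{ren2}), each visit to $A$ carries an (essentially) independent Bernoulli$(\lambda_0)$ trial, and regeneration ($Z_i \in a$) happens at the first successful trial. Writing $N$ for the index of that trial, $N$ is geometric with parameter $\lambda_0$, so $\mathbb{P}(N \geqslant k) = (1-\lambda_0)^{k-1}$, and $\theta_a = T_N = \sum_{k=1}^{N} W_k$. This identity is the backbone of everything that follows.

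By Minkowski's inequality,
\[
\mathbb{E}_{x_0}[\theta_a^p]^{1/p} \;\leqslant\; \|W_1\|_p \;+\; \sum_{k \geqslant 2} \|W_k \mathbb{1}_{N \geqslant k}\|_p.
\]
The first term is exactly $\mathbb{E}_{x_0}[\tau_A^p]^{1/p}$. For $k \geqslant 2$, the strong Markov property of $X$ at $T_{k-1} \in A$ gives $\mathbb{E}[W_k^p \mid X_{T_{k-1}}] \leqslant \xi(p)$, and combining with the geometric decay of $\mathbb{P}(N \geqslant k)$ I expect to obtain $\|W_k \mathbb{1}_{N \geqslant k}\|_p \leqslant \xi(p)^{1/p} (1-\lambda_0)^{(k-1)/p}$. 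Summing the resulting geometric series and using $1 - \lambda_0 \leqslant e^{-\lambda_0}$ (so $(1-\lambda_0)^{1/p} \leqslant e^{-\lambda_0/p}$) gives
\[
\sum_{k \geqslant 2} (1-\lambda_0)^{(k-1)/p} \;=\; \frac{(1-\lambda_0)^{1/p}}{1 - (1-\lambda_0)^{1/p}} \;\leqslant\; \frac{1}{e^{\lambda_0/p} - 1},
\]
which yields (\ref{c0tau0}).

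For (\ref{c0tau0prime}), the same decomposition applies, but starting from $a$ forces $X_1 \sim \psi$, so $W_1$ is the time to first enter $A$ starting with $X_1 \sim \psi$. The minorization $P(x,\cdot) \geqslant \lambda_0 \psi$ for $x \in A$ gives
\[
\xi(p) \;\geqslant\; \mathbb{E}_x[\tau_A^p] \;\geqslant\; \lambda_0 \,\mathbb{E}_\psi[(1+\tau_A^*)^p],
\]
where $\tau_A^*(y)$ denotes the first passage time to $A$ with the convention $\tau_A^*(y)=0$ for $y \in A$. Hence $\mathbb{E}_a[W_1^p]^{1/p} \leqslant \lambda_0^{-1/p} \xi(p)^{1/p} \leqslant \lambda_0^{-1} \xi(p)^{1/p}$, and combining with the geometric-series term yields the claimed bound after absorbing the $\lambda_0^{-1/p}$ and the $1/(e^{\lambda_0/p}-1)$ piece into the stated $\lambda_0^{-1} e^{\lambda_0/p}/(e^{\lambda_0/p}-1)$.

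The main obstacle is the bound on $\|W_k \mathbb{1}_{N \geqslant k}\|_p$ for $k \geqslant 2$, because under the split chain $W_k$ and $\mathbb{1}_{N \geqslant k}$ are not independent: on $\{Y_{T_{k-1}} = 0\}$ the transition out of $X_{T_{k-1}}$ uses the modified kernel $\mu(x,\cdot) = (P(x,\cdot) - \lambda_0\psi)/(1-\lambda_0)$, which skews the distribution of $W_k$. Controlling this requires iterated tower conditioning along the filtration of the split chain, writing
\[
\mathbb{E}\bigl[W_k^p \mathbb{1}_{N\geqslant k}\bigr] \;=\; \mathbb{E}^{\mathrm{orig}}\!\Big[W_k^p \prod_{j<k}\Big(1 - \lambda_0 \tfrac{\psi(X_{T_j+1})}{P(X_{T_j},X_{T_j+1})}\Big)\Big],
\]
and exploiting that each factor in the product has conditional mean $1-\lambda_0$ under the original transition to produce the required geometric decay.
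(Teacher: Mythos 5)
Your decomposition $\theta_a = \sum_{k=1}^{N} W_k$ with a geometric $N$ and Minkowski over the cycle is a genuinely different route from the paper's. The paper does a \emph{one-step} Minkowski split $\theta_a \leqslant \tau_A + (\theta_a - \tau_A)\mathbb{1}_{\{\theta_a > \tau_A\}}$, uses the strong Markov property once at $\tau_A$ to bound the remainder by $(1-\lambda_0)^{1/p}\gamma$ with $\gamma = \sup_{x\in A}\mathbb{E}_x[\theta_a^p]^{1/p}$, then closes a self-referential inequality
\begin{align*}
\gamma \;\leqslant\; (1-\lambda_0)^{1/p}\gamma + \xi(p)^{1/p}
\end{align*}
and solves it. That fixed-point trick is what delivers the constant $1/(e^{\lambda_0/p}-1)$ cleanly, because $\tau_A$ is a functional of the $X$-chain alone and its moment is the unmodified $\mathbb{E}_{x_0}[\tau_A^p]$; the modified kernel appears only once. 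The inequality (\ref{c0tau0prime}) is then a corollary of (\ref{c0tau0}) via $\mathbb{E}_{x_0}[\theta_a^p\mathbb{1}_{\{Y_0=1\}}] = \lambda_0\,\mathbb{E}_a[\theta_a^p]$ for $x_0\in A$, rather than by bounding $\mathbb{E}_a[W_1^p]$ through the minorization as you do.

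There is a genuine gap in your argument, and you correctly identify where it is but the sketched fix does not close it with the exponent you need. The claim $\|W_k\mathbb{1}_{\{N\geqslant k\}}\|_p \leqslant \xi(p)^{1/p}(1-\lambda_0)^{(k-1)/p}$ does not hold, precisely because $W_k$ and $Y_{T_{k-1}}$ are not independent. Carrying out the tower/conditioning argument you outline: condition on $\mathscr{G}_{T_{k-1}} = \sigma\bigl(Z_0,\dots,Z_{T_{k-1}-1},X_{T_{k-1}}\bigr)$; the event $\{N\geqslant k-1\}$ is $\mathscr{G}_{T_{k-1}}$-measurable, has probability $(1-\lambda_0)^{k-2}$, and conditionally on $\mathscr{G}_{T_{k-1}}$ with $X_{T_{k-1}}=x$,
\begin{align*}
\mathbb{E}\bigl[W_k^p\mathbb{1}_{\{Y_{T_{k-1}}=0\}}\,\big|\,\mathscr{G}_{T_{k-1}}\bigr]
= \int\bigl(P(x,dy)-\lambda_0\psi(dy)\bigr)\,\mathbb{E}_y[(1+\tau_A^*)^p]
= \mathbb{E}_x[\tau_A^p] - \lambda_0\mathbb{E}_\psi[(1+\tau_A^*)^p]
\;\leqslant\; \xi(p).
\end{align*}
This factor of $(1-q_{k-1})$ cannot be decoupled from $W_k$ and must just be bounded by $1$; only the $k-2$ factors $(1-q_j)$ with $j<k-1$ decouple and contribute a geometric factor each. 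Hence the best you get is $\mathbb{E}[W_k^p\mathbb{1}_{\{N\geqslant k\}}] \leqslant \xi(p)(1-\lambda_0)^{k-2}$, and the geometric series becomes
\begin{align*}
\sum_{k\geqslant 2}\xi(p)^{1/p}(1-\lambda_0)^{(k-2)/p}
= \frac{\xi(p)^{1/p}}{1-(1-\lambda_0)^{1/p}}
\;\leqslant\; \frac{e^{\lambda_0/p}}{e^{\lambda_0/p}-1}\,\xi(p)^{1/p},
\end{align*}
which exceeds the stated bound $\frac{1}{e^{\lambda_0/p}-1}\xi(p)^{1/p}$ by a factor $e^{\lambda_0/p}$. (For $k=2$ the issue is already visible: $\mathbb{E}[W_2^p\mathbb{1}_{\{Y_{T_1}=0\}}\,|\,X_{T_1}=x]$ equals $\mathbb{E}_x[\tau_A^p]-\lambda_0\mathbb{E}_\psi[(1+\tau_A^*)^p]$, which can be arbitrarily close to $\xi(p)$ rather than $(1-\lambda_0)\xi(p)$, for instance when $\psi$ is concentrated on $A$.) So your strategy yields a correct bound of the same shape with a slightly worse constant, which is sufficient for every downstream use of the lemma in the paper, but it does not prove (\ref{c0tau0}) as stated. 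The paper's one-step fixed-point inequality is the way to avoid this loss.
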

\begin{proof} 
We start by showing (\ref{c0tau0}). Suppose that $\mathbb {E}_{x_0}[\tau_A^p]<+\infty$ 
and $\sup_{x\in A}\mathbb {E}_x[\tau_A^p]<+\infty$, if not, the stated inequality is obviously satisfied. 
By the Minkowski inequality, we have
\begin{align*}
\mathbb {E}_{x_0}[\theta_a^p]^{1/p}
&\leqslant \mathbb {E}_{x_0}[(\theta_a-\tau_A)^p]^{1/p}+\mathbb {E}_{x_0}[\tau_A^p]^{1/p}\\
&= \mathbb {E}_{x_0}[(\theta_a-\tau_A)^p  \mathbb{1}_{\{\theta_a>\tau_A \}}]^{1/p}+\mathbb {E}_{x_0}[\tau_A^p]^{1/p}.
\end{align*}
Let $\mathscr F_{\tau_A}$ denote the $\sigma$-field of the past before $\tau_A$ and note that ${\{\theta_a>\tau_A \}}$ is $\mathscr F_{\tau_A}$-measurable. By the strong Markov property, it holds
\begin{align*}
\mathbb {E}_{x_0}[(\theta_a-\tau_A)^p \mathbb{1}_{\{\theta_a>\tau_A \}} |\mathscr F_{\tau_A}]= \mathbb {E}_{x_0}[(\theta_a-\tau_A)^p   |\mathscr F_{\tau_A}]\mathbb{1}_{\{\theta_a>\tau_A \}} \leqslant \mathbb{1}_{\{\theta_a>\tau_A \}} \sup_{x\in A}\mathbb {E}_x[\theta_a^p].
\end{align*}
Hence, setting $\gamma=\sup_{x\in A }\mathbb {E}_x[\theta_a^p]^{1/p}$, and because $\lambda_0 = \mathbb {P}_{x_0}(\theta_a= \tau_A)=\mathbb {P}_{x_0}(Y_{\tau_A}=1)$,
\begin{align}\label{mink}
\mathbb {E}_{x_0}[\theta_a^p]^{1/p}&\leqslant \gamma (1-\lambda_0)^{1/p}+\mathbb {E}_{x_0}[\tau_A^p]^{1/p}.
\end{align}
In particular, it follows that
\begin{align*}
(1-(1-\lambda_0)^{1/p})\gamma &\leqslant \sup_{x\in A}\mathbb {E}_x[\tau_A^p]^{1/p}.
\end{align*}
Thus, (\ref{mink}) becomes
\begin{align}
\mathbb {E}_{x_0}[\theta_a^p]^{1/p}&\leqslant (1-\lambda_0)^{1/p}(1-(1-\lambda_0)^{1/p})^{-1}\sup_{x\in A}
\mathbb {E}_x[\tau_A^p]^{1/p}+\mathbb {E}_{x_0}[\tau_A^p]^{1/p},
\end{align}
and we obtain (\ref{c0tau0}) by using $1-\lambda_0\leqslant e^{-\lambda_0}$. To get (\ref{c0tau0prime}), 
note that for every $x_0\in A$,
\begin{align*}
\mathbb {E}_{x_0} [ \theta_a^{p}   \mathbb{1}_{\{Y_0=1\}}] = \lambda_0 \mathbb {E}_{a} [ \theta_a^{p} ].
\end{align*} 
It follows that $ \mathbb {E}_{a} [ \theta_a^{p} ]\leqslant\lambda_0^{-1}\mathbb {E}_{x_0}[\theta_a^{p}]$ 
and we get the result from (\ref{c0tau0}), taking the supremum over $A$.
\end{proof}

We shall need also the following extension of (\ref{bert0}).
\begin{lemma}\label{lemma:invariant_meas_formula_extended}
Let $(X_i)_{i\in \mathbb N}$ be a Markov chain satisfying (\ref{taupr}), (\ref{tauint}) and (\ref{ren1}).
For any measurable function $h:\cup_{n\geqslant 1}\mathbb R^n\rightarrow \mathbb R$ such that $\mathbb {E}_\pi[h(X_1,...X_{\theta_a})]<+\infty$, 
(for any $n$ the restriction of $h$ to $\mathbb R^n$ is measurable), we have
\begin{align}\label{taupui}
\alpha_0 \mathbb {E}_\pi[h(X_1,...X_{\theta_a})]&=\mathbb {E}_a\Big[\sum_{i=1}^{\theta_a}h(X_i,...X_{\theta_a})\Big].
\end{align}
In particular, for any $p>0$,
\begin{align}\label{eq:inequality_invariant_measu}
\alpha_0 \mathbb {E}_\pi[\theta_a^p]
&\leqslant \mathbb {E}_a[\theta_a^{p+1}]\leqslant (p+1)\alpha_0 \mathbb {E}_\pi[\theta_a^p].
\end{align}
\end{lemma}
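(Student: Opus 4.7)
The plan is to first establish the path-functional identity (\ref{taupui}), from which the moment inequalities (\ref{eq:inequality_invariant_measu}) follow by a particular choice of $h$.

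For (\ref{taupui}), the idea is to decompose $h(X_1,\ldots,X_{\theta_a})$ according to the cycle length. Writing $z_i=(x_i,y_i)$ and
\[
f_n(z_1,\ldots,z_n) = h(x_1,\ldots,x_n)\,\mathbb{1}_{\{z_1,\ldots,z_{n-1}\notin a,\ z_n\in a\}},\qquad n\geqslant 1,
\]
one has $h(X_1,\ldots,X_{\theta_a}) = \sum_{n\geqslant 1} f_n(Z_1,\ldots,Z_n)$ since exactly one indicator is nonzero. So, by Fubini (justified by the integrability hypothesis), (\ref{taupui}) reduces to the cylindrical identity
\[
\alpha_0\,\mathbb {E}_\pi[f_n(Z_1,\ldots,Z_n)] = \mathbb {E}_a\Big[\sum_{i=1}^{\theta_a} f_n(Z_i,\ldots,Z_{i+n-1})\Big],\qquad n\geqslant 1,
\]
together with a collapse identity: for fixed $i$, $\sum_{n\geqslant 1} f_n(Z_i,\ldots,Z_{i+n-1})$ retains only the single term $n=\theta_a^i$ equal to the first hit of $a$ starting from index $i$, which under $\mathbb {P}_a$ with $i\leqslant\theta_a$ equals $\theta_a-i+1$, so the sum collapses to $h(X_i,\ldots,X_{\theta_a})$.

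To prove the cylindrical identity, I condition on $\sigma(Z_1,\ldots,Z_i)$ and invoke the Markov property of the split chain: $\mathbb {E}[f_n(Z_i,\ldots,Z_{i+n-1})\mid\sigma(Z_1,\ldots,Z_i)] = \bar f_n(Z_i)$ with $\bar f_n(z)=\mathbb {E}_z[f_n(Z_0,\ldots,Z_{n-1})]$. Since $\{i\leqslant \theta_a\}$ is $\sigma(Z_1,\ldots,Z_{i-1})$-measurable, it follows that
\[
\mathbb {E}_a\Big[\sum_{i=1}^{\theta_a} f_n(Z_i,\ldots,Z_{i+n-1})\Big] = \mathbb {E}_a\Big[\sum_{i=1}^{\theta_a} \bar f_n(Z_i)\Big] = \alpha_0\,\pi(\bar f_n) = \alpha_0\,\mathbb {E}_\pi[f_n(Z_1,\ldots,Z_n)],
\]
where the second equality is (\ref{bert0}) applied to $g=\bar f_n$ and the last uses stationarity.

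For (\ref{eq:inequality_invariant_measu}), apply (\ref{taupui}) with $h(x_1,\ldots,x_n)=n^p$. The change of index $j=\theta_a-i+1$ yields
\[
\alpha_0\,\mathbb {E}_\pi[\theta_a^p] = \mathbb {E}_a\Big[\sum_{j=1}^{\theta_a} j^p\Big],
\]
and the elementary sandwich $\theta_a^{p+1}/(p+1)\leqslant\sum_{j=1}^{\theta_a} j^p \leqslant\theta_a^{p+1}$ (lower bound by integral comparison, upper bound since $j^p\leqslant\theta_a^p$) produces both desired inequalities after taking $\mathbb {E}_a$-expectation. The main obstacle is the cylindrical identity: the technicality is that $\theta_a$ depends on the $Y$-variables of the split chain, so conditioning must be carried out with respect to the full $Z$-filtration, under which the $Z$-chain is genuinely Markov, rather than the $X$-filtration alone, which would not preserve the Markov property once one conditions on events involving $\theta_a$.
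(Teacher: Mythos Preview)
Your argument is correct. The paper's proof follows the same outline --- reduce to a one-variable function and invoke (\ref{bert0}) --- but does so more directly: it sets $g(z)=\mathbb {E}_z[h(X_1,\ldots,X_{\theta_a})]$, applies (\ref{bert0}) to $g$ once, and then uses the Markov property together with $\{i<\theta_a\}\in\mathscr F_i$ to rewrite $\mathbb {E}_a\big[\sum_{i=1}^{\theta_a} g(X_i)\big]$ term-by-term as $\mathbb {E}_a\big[\sum_{i=1}^{\theta_a} h(X_i,\ldots,X_{\theta_a})\big]$, without ever decomposing by cycle length. Your route via the cylinder functions $f_n$ is a valid alternative that makes the role of the split chain and the filtration more explicit, at the cost of an extra Fubini and summation layer; the paper's shortcut buys compactness but requires the reader to parse the (slightly delicate) identity $g(X_{\theta_a})=\mathbb {E}_a[h(X_1,\ldots,X_{\theta_a})]$, which relies on the atom property of $a$. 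For (\ref{eq:inequality_invariant_measu}) the two proofs coincide: both apply (\ref{taupui}) with $h(x_1,\ldots,x_k)=k^p$, reindex to get $\mathbb {E}_a\big[\sum_{j=1}^{\theta_a} j^p\big]$, and sandwich this sum between $\theta_a^{p+1}/(p+1)$ and $\theta_a^{p+1}$.
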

\begin{proof} Having (\ref{taupr}), (\ref{tauint}) and (\ref{ren1}) we can use the formula (\ref{bert0}). 
Define $g(x)=\mathbb {E}_x[h(X_1,...X_{\theta_a})] $ and remark that, by the Markov property and the fact that $\{i< \theta_a\}$ is $\mathscr F_i$-measurable,
\begin{align*}
&\mathbb {E}_a(g(X_i) \mathbb{1}_{\{i< \theta_a\}}) = \mathbb {E}_a(h(X_{i+1},...X_{\theta_a}) \mathbb{1}_{\{i< \theta_a\}} ),\\
&g(X_{\theta_a})=\mathbb {E}_a(h(X_{1},...X_{\theta_a})) .
\end{align*}
 Then using (\ref{bert0}) with $g$, we get
\begin{align*}
\alpha_0\mathbb {E}_\pi[h(X_1,...X_{\theta_a})]
&=\alpha_0 \pi(g) \\
&=\mathbb {E}_a\Big[\sum_{i=1}^{\theta_a}g(X_i) \Big]\\
&=\mathbb {E}_a\Big[\sum_{i=1}^{{\theta_a}-1}h(X_{i+1},...X_{\theta_a})\Big]
+\mathbb {E}_a\Big[h(X_{1},...X_{{\theta_a}})\Big]\\
&=\mathbb {E}_a\Big[\sum_{i=1}^{{\theta_a}}h(X_i,...X_{\theta_a})\Big].
\end{align*}
Concerning the second statement, we use the fact that
$1+2^p+\dots \theta_a^p\geqslant \int_0^{\theta_a} x^pdx=\frac{\theta_a^{p+1}}{p+1}$ to write
\begin{align*}
 \tfrac{1 }{p+1}\mathbb {E}_a[\theta_a^{p+1}]
 \leqslant \mathbb {E}_a\big[\sum_{i=1}^{\theta_a} i^p\big] \leqslant   \mathbb {E}_a[\theta_a^{p+1}].
\end{align*}
We conclude by using (\ref{taupui}) with $h(x_1,\dots x_k)=k^p$, to show that the middle term is 
$\alpha_0\mathbb {E}_\pi[\theta_a^p]$.
\end{proof}

\begin{lemma}\label{lemma:insideblock}
Let $(X_i)_{i\in \mathbb N}$ be a Markov chain satisfying (\ref{taupr}), (\ref{tauint}) and (\ref{ren1}).  For any $p>2$, there exists $C>0$ (depending on $p$, $\lambda_0$, $\alpha_0$) such that for 
any measurable function $f$,
\begin{align*}
\mathbb {E}_a\Big[\Big(\sum_{i=1}^{\theta_a} f(X_i)\Big)^2\Big]
&\leqslant   C \left(\xi(p)^2\pi(f^2)+ \xi(p) \mathbb E_\pi[f(X_0)^2 \tau_A^p]\right) .
\end{align*}
\end{lemma}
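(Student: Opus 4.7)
The plan is first to apply Cauchy--Schwarz to reduce the squared sum to $\mathbb{E}_a[\theta_a V]$ where $V = \sum_{i=1}^{\theta_a} f(X_i)^2$:
\begin{align*}
\mathbb{E}_a\Big[\Big(\sum_{i=1}^{\theta_a} f(X_i)\Big)^2\Big] \leqslant \mathbb{E}_a\Big[\theta_a \sum_{i=1}^{\theta_a} f(X_i)^2\Big].
\end{align*}
Using the pointwise identity $\theta_a = (\theta_a - i + 1) + (i-1)$ inside the outer sum, I would decompose this quantity into a \emph{forward} piece $\mathbb{E}_a[\sum_i (\theta_a - i + 1) f(X_i)^2]$ and a \emph{backward} piece $\mathbb{E}_a[\sum_i (i-1) f(X_i)^2]$, and bound each one in turn.

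The forward piece fits directly into the regeneration identity of Lemma~\ref{lemma:invariant_meas_formula_extended}: taking $h(x_1, \ldots, x_k) = k f(x_1)^2$ one has $h(X_i, \ldots, X_{\theta_a}) = (\theta_a - i + 1) f(X_i)^2$, so by (\ref{taupui}) the forward piece equals $\alpha_0 \mathbb{E}_\pi[\theta_a f(X_1)^2]$. Conditioning on $X_1 \sim \pi$ (stationarity) and combining Lemma~\ref{lemma:tau}, which yields $\mathbb{E}_y[\theta_a^p] \lesssim \xi(p) + \mathbb{E}_y[\tau_A^p]$ uniformly in $y$, with Jensen's inequality gives $\mathbb{E}_y[\theta_a] \lesssim \xi(p)^{1/p} + \mathbb{E}_y[\tau_A^p]^{1/p}$. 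Integrating this bound against $f(y)^2 \pi(dy)$ and using the crude inequality $t^{1/p} \leqslant 1 + t$ for $t \geqslant 0$ produces the two contributions $\xi(p)^{1/p} \pi(f^2)$ and $\mathbb{E}_\pi[f(X_0)^2 \tau_A^p]$, each dominated by a corresponding term on the right-hand side of the statement (using that $\xi(p)$ may be assumed $\geqslant 1$).

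For the backward piece I would split the sum at $\tau_A$: for $i \leqslant \tau_A$ the inequality $(i-1) \leqslant \tau_A$ bounds this portion by $\mathbb{E}_a[\tau_A \sum_{i=1}^{\tau_A} f(X_i)^2]$, which again reduces, through (\ref{bert0}) and (\ref{taupui}), to an expression of the form $\mathbb{E}_\pi[f(X_0)^2 \tau_A^p]$ up to constants depending on $\lambda_0$. For $i > \tau_A$, the strong Markov property at $\tau_A$ combined with the rejection structure of the split kernel shows that the post-$\tau_A$ trajectory is a geometrically truncated sum of independent regeneration-like cycles, each of which can be bounded using the already-computed forward-piece estimate and the geometric decay of the number of failed regeneration attempts.

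The main obstacle I foresee is keeping the final bound in terms of the $L^2(\pi)$-norm of $f$ and the weighted integral $\mathbb{E}_\pi[f^2 \tau_A^p]$, rather than higher $L^q(\pi)$-norms that a careless Hölder application would produce. The slack provided by the hypothesis $p > 2$ is exactly what allows a suitable allocation of exponents so that all the ``power'' lands on the length variables $\theta_a$ and $\tau_A$, which are in turn controlled by Lemma~\ref{lemma:tau}.
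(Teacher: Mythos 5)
Your global Cauchy--Schwarz step $\left(\sum_{i=1}^{\theta_a} f(X_i)\right)^2 \leqslant \theta_a \sum_{i=1}^{\theta_a} f(X_i)^2$ followed by the split $\theta_a = (\theta_a - i + 1) + (i-1)$ is not how the paper proceeds, and the backward piece it creates is where the argument breaks down. The forward piece is fine: with $h(x_1,\ldots,x_k)=k\,f(x_1)^2$, identity (\ref{taupui}) gives $\alpha_0\,\mathbb{E}_\pi[\theta_a f(X_1)^2]$, which can indeed be turned into the target via Lemma~\ref{lemma:tau} and a shift of the time index (the paper's $\widetilde\theta_a$ device). The backward piece, however, after the same manoeuvre (take $h(x_1,\ldots,x_m)=\sum_{j\geqslant 2}f(x_j)^2$) equals $\alpha_0\,\mathbb{E}_\pi\big[\sum_{i\geqslant 2} f(X_i)^2\,\mathbb{1}_{\{i\leqslant\theta_a\}}\big]$, and here there is no ``spectator'' factor sitting at time~$1$ to which the weight $\theta_a^p$ (arising from $\mathbb{1}_{\{i\leqslant\theta_a\}}\leqslant i^{-p}\theta_a^p$) can be attached. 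Attaching it to $f(X_i)^2$ does not produce $\mathbb{E}_\pi[f(X_0)^2\tau_A^p]$, because on $\{i\leqslant\theta_a\}$ it is $\theta_a-(i-1)$, not $\theta_a$, that shifts onto time~$0$; and decoupling by H\"older brings in $L^q(\pi)$-norms of $f$ with $q>2$, which the statement does not permit. The two sub-steps you sketch do not repair this: for $i\leqslant\tau_A$, both (\ref{bert0}) and (\ref{taupui}) are statements about $\theta_a$, not $\tau_A$, and the factor $\tau_A$ multiplying $f(X_i)^2$ depends on the prefix $(X_1,\ldots,X_{\tau_A})$ rather than the suffix $(X_i,\ldots,X_{\theta_a})$, so (\ref{taupui}) simply does not apply; for $i>\tau_A$, the successive $A$-to-$A$ excursions under $\mathbb{P}_a$ are not i.i.d.\ (the entry state into $A$ governs the law of the next excursion), and the weight $i-1$ grows precisely over the later excursions, which a bare ``geometric number of regeneration attempts'' argument does not control.

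For comparison, the paper never uses the global Cauchy--Schwarz. It writes the square \emph{exactly} as a telescoping sum, $\big(\sum_{j=1}^{k} f(x_j)\big)^2 = \sum_{i=1}^{k}\big(f(x_i)^2 + 2 f(x_i)\sum_{j>i}f(x_j)\big)$, applies (\ref{taupui}) once to obtain $\alpha_0\pi(f^2) + 2\alpha_0\,\mathbb{E}_\pi\big[f(X_1)\sum_{i\geqslant 2}f(X_i)\big]$, and then treats the cross term by inserting $\mathbb{1}_{\{i\leqslant\theta_a\}}\leqslant i^{-p/2}\theta_a^{p/2}$ and applying Cauchy--Schwarz \emph{per index $i$} to the product $\theta_a^{p/2}f(X_1)\cdot f(X_i)$; this yields $\sum_{i\geqslant 2} i^{-p/2}\,\mathbb{E}_\pi[f(X_1)^2\theta_a^{p}]^{1/2}\pi(f^2)^{1/2}$, which converges for $p>2$ and places the whole power $\theta_a^p$ on $f(X_1)^2$ at time~$1$, where it shifts to $X_0$ and is bounded via Lemma~\ref{lemma:tau}. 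That per-index Cauchy--Schwarz with the weight placed on the time-$1$ factor, rather than a global Cauchy--Schwarz at the outset, is the missing idea in your argument.
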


\begin{proof}
Suppose that $f\geqslant 0$. If not, take $|f|$ instead of $f$. 
In what follows, we use the convention that empty sums equal $0$. 
Applying Lemma~\ref{lemma:invariant_meas_formula_extended} with 
\begin{align*}
h(x_i,\ldots x_k) =\Big(\sum_{j=1}^k f(x_j)\Big)^2-\Big(\sum_{j=2}^k f(x_j)\Big)^2
= f(x_i)^2 +2 f(x_i)\sum_{j=2}^k f(x_j),
\end{align*}
we get that 
\begin{align*}
\mathbb {E}_a\Big[\Big(\sum_{i=1}^{\theta_a} f(X_i)\Big)^2\Big]& =\mathbb {E}_a\Big[\sum_{i=1}^{\theta_a}h(X_i,\ldots X_{\theta_a})\Big]\\
&= \alpha_0\mathbb {E}_\pi\Big[ f(X_1)\Big(f(X_1)+2\sum_{i=2}^{\theta_a} f(X_i)\Big)\Big]\\
&= \alpha_0\left( \pi(f^2)+2 \mathbb {E}_\pi\Big[ f(X_1)\sum_{i=2}^{\theta_a} f(X_i)\Big]\right).
\end{align*}
For any $p>2$, the second term is bounded as follows
\begin{align*}
\mathbb {E}_\pi\Big[f(X_1)\sum_{i=2}^{{\theta_a}} f(X_i)\Big]
&=\sum_{i\geqslant 2} \mathbb {E}_\pi\Big[\mathbb{1}_{i\leqslant {\theta_a}}f(X_1)f(X_i)\Big]\\
&\leqslant \sum_{i\geqslant 2} \mathbb {E}_\pi\Big[i^{-p/2}\theta_a^{p/2} \,f(X_1)f(X_i)\Big]\\
&\leqslant \sum_{i\geqslant 2} i^{-p/2} \mathbb {E}_\pi\Big[f(X_1)^2\theta_a^{p}\Big]^{1/2}\mathbb {E}_\pi\Big[f(X_i)^2\Big]^{1/2}\\
&= \left(  \sum_{i\geqslant 2} i^{-p/2}\right) \mathbb {E}_\pi\Big[f(X_1)^2\theta_a^p\Big]^{1/2}\mathbb {E}_\pi\Big[f(X_1)^2\Big]^{1/2} \\
&\leqslant  \left( \frac2{p-2}\right)  \mathbb {E}_\pi\Big[f(X_1)^2\theta_a^p\Big],
\end{align*}
where we have used $\sum_{i\geqslant 2} i^{-p/2}\leqslant \int_1^{+\infty}x^{-p/2}dx$. 
If $\widetilde{\theta}_a$ is the first time $k\geqslant 2$ such $Z_k\in a$, it holds 
\begin{align*}
\mathbb {E}_\pi\Big[f(X_1)^2\theta_a^{p}\Big]\leqslant
\mathbb {E}_\pi\Big[f(X_1)^2\widetilde \theta_a^{p}\Big]
=\mathbb {E}_\pi\Big[f(X_0)^2(\theta_a^{p}+1)\Big]
\leqslant 2\mathbb {E}_\pi\Big[f(X_0)^2\theta_a^{p}\Big] .
\end{align*}
Applying Lemma \ref{lemma:tau}, equation (\ref{c0tau0}), and using that for every $a,b\geqslant 0$, 
and $p>1$, $(a+b)^p\leqslant 2^{p-1} (a^p+b^p)$, we get
\begin{align*}
\mathbb {E}_\pi\Big[f(X_0)^2\theta_a^{p}\Big]
\leqslant 2^{p-1}  \left( \frac{ 1 }{ (e^{\lambda_0/{p}}-1)^p}   \xi(p)  \pi(f^2)  
+  \mathbb E_\pi\left[ f(X_0)^2 \tau_A^{p} \right] \right).
\end{align*} 
Bringing everything together, we get 
\begin{align*}
\mathbb {E}_a\Big[\Big(\sum_{i=1}^{\theta_a} f(X_i)\Big)^2\Big]\leqslant \alpha_0
\left( \pi(f^2)+ \frac{2^{p+2}}{p-2}  \left( \frac{ 1 }{ (e^{\lambda_0/{p}}-1)^p} \xi(p)  \pi(f^2)  
+  \mathbb E_\pi\left[ f(X_0)^2 \tau_A^{p} \right] \right)\right).
\end{align*}
This leads to the stated result.
\end{proof}

\begin{theorem}\label{rosenth}
Let $(X_i)_{i\in \mathbb N}$ be a Markov chain satisfying (\ref{taupr}), (\ref{tauint}) 
and (\ref{ren1}). 
There exists $C>0$ (depending on $p$, $\lambda_0$, $\alpha_0$) such that, for any measurable function $g$ with $\pi(g)=0$, any $n\geqslant 1$ and $p>2$,
\begin{align*}
&\mathbb {E}_\pi\Big[\Big(\sum_{i=1}^n g(X_i)\Big)^2\Big]
\leqslant n C \left(\xi(p)^2\pi(g^2)+ \xi(p) \mathbb E_\pi[g(X_0)^2 \tau_A^p]\right).
\end{align*}
\end{theorem}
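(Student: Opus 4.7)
The plan is to combine the Nummelin regeneration block decomposition with the observation that the hypothesis $\pi(g)=0$ makes the complete block sums centered, thereby reducing the second-moment bound to Lemma~\ref{lemma:insideblock}. Let $\tau_k=\theta_a(k)$ denote the $k$-th hitting time of the atom $a$ by the split chain, let $S_k=\sum_{i=\tau_k+1}^{\tau_{k+1}}g(X_i)$ be the $k$-th complete block sum, and set $l_n=\max\{k\geq 0:\tau_k\leq n\}$ (with $\tau_0=0$). By formula~(\ref{bert0}) applied to $g$, $\mathbb{E}_a[S_1]=\alpha_0\pi(g)=0$, and by the strong Markov property at the atom, $(S_k)_{k\geq 1}$ is an i.i.d.\ centered sequence under $\mathbb{P}_\pi$ (with the $\mathbb{P}_a$-distribution of $S_1$), independent of the initial segment $(X_1,\ldots,X_{\tau_1})$. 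Lemma~\ref{lemma:insideblock} then bounds $\mathbb{E}_a[S_1^2]$ by exactly the right-hand side of the claim with $n$ replaced by~$1$.

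I would decompose
\begin{align*}
\sum_{i=1}^n g(X_i)=T+\mathbb{1}_{\{\tau_1\leq n\}}\sum_{k=1}^{l_n-1}S_k+T',
\end{align*}
where $T=\sum_{i=1}^{\tau_1\wedge n}g(X_i)$ and $T'=\mathbb{1}_{\{\tau_1\leq n\}}\sum_{i=\tau_{l_n}+1}^n g(X_i)$ are the initial and trailing incomplete blocks, and apply $(a+b+c)^2\leq 3(a^2+b^2+c^2)$. The middle piece is the dominant contribution: the partial sums $M_m=\sum_{k=1}^m S_k$ form a centered $\mathbb{P}_\pi$-martingale with i.i.d.\ increments, and since $l_n-1\leq n$, Doob's $L^2$ maximal inequality yields
\begin{align*}
\mathbb{E}_\pi\big[M_{l_n-1}^2\big]\;\leq\;\mathbb{E}_\pi\Big[\max_{0\leq m\leq n}M_m^2\Big]\;\leq\;4n\,\mathbb{E}_a[S_1^2],
\end{align*}
which together with Lemma~\ref{lemma:insideblock} gives the target bound. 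For the trailing piece, dominate $T'^2\leq\sum_{k\geq 1}\mathbb{1}_{\{\tau_k\leq n\}}\big(\sum_{i\in B_k}|g(X_i)|\big)^2$; since $\mathbb{1}_{\{\tau_k\leq n\}}$ is measurable with respect to $(\tau_1,B_1,\ldots,B_{k-1})$, it is independent of $B_k$ (which is $\mathbb{P}_a$-distributed by the strong Markov property), hence
\begin{align*}
\mathbb{E}_\pi[T'^2]\;\leq\;\mathbb{E}_\pi[l_n]\,\mathbb{E}_a\Big[\Big(\sum_{i=1}^{\theta_a}|g(X_i)|\Big)^2\Big]\;\leq\;n\,\mathbb{E}_a\Big[\Big(\sum_{i=1}^{\theta_a}|g(X_i)|\Big)^2\Big],
\end{align*}
using $\mathbb{E}_\pi[l_n]=\sum_k\mathbb{P}_\pi(\tau_k\leq n)$ and $l_n\leq n$; a final application of Lemma~\ref{lemma:insideblock} to $|g|$ closes the estimate.

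The only delicate step is the initial term $T$. I would expand $T^2\leq\sum_{i,j\geq 1}|g(X_i)||g(X_j)|\mathbb{1}_{\max(i,j)\leq\tau_1}$, use $\mathbb{1}_{\{\tau_1\geq k\}}\leq(\tau_1/k)^p$ to extract the factor $\max(i,j)^{-p}\tau_1^p$, and apply Cauchy--Schwarz to the two $|g|$-factors, reducing the problem to the uniform-in-$i$ bound
\begin{align*}
\mathbb{E}_\pi\big[g(X_i)^2\,\tau_1^p\big]\;\leq\;C\big(\xi(p)\pi(g^2)+\mathbb{E}_\pi[g(X_0)^2\tau_A^p]\big).
\end{align*}
This bound is obtained by the same stationarity-plus-Markov-shift argument already used in the proof of Lemma~\ref{lemma:insideblock} to control $\mathbb{E}_\pi[f(X_1)^2\theta_a^p]$, combined with Lemma~\ref{lemma:tau} to convert $\theta_a$-moments into $\tau_A$-moments. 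The remaining double series $\sum_{i,j\geq 1}\max(i,j)^{-p}$ converges precisely because $p>2$, producing the desired $O(1)$ bound on $\mathbb{E}_\pi[T^2]$.

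The main technical obstacle is this uniform bound on $\mathbb{E}_\pi[g(X_i)^2\tau_1^p]$ for arbitrary $i\geq 1$, which requires a careful use of the strong Markov property at time $i$ to decouple $g(X_i)$ from the residual hitting time of the atom; a naive application of the Palm formula (Lemma~\ref{lemma:invariant_meas_formula_extended}) on the boundary term leaves an unwanted $\theta_a$-factor that cannot be absorbed using only the $p$-th moment of $\tau_A$. Everything else---the i.i.d.\ block structure, centering, and Doob's inequality on the martingale piece---falls into place routinely.
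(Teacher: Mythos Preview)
Your block decomposition, the use of Doob's inequality on the martingale piece, and the reduction to Lemma~\ref{lemma:insideblock} all match the paper's proof exactly. Your treatment of the trailing block is different from the paper's---you sum over all started blocks weighted by $\mathbb{1}_{\{\tau_k\le n\}}$ and exploit independence of $\mathbb{1}_{\{\tau_k\le n\}}$ from $B_k$, whereas the paper simply bounds by the full block containing $n$---but your argument is correct (just looser, yielding $O(n)$ where $O(1)$ would do).

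The genuine gap is in the initial term. Your claimed uniform bound $\mathbb{E}_\pi[g(X_i)^2\tau_1^p]\le C(\xi(p)\pi(g^2)+\mathbb{E}_\pi[g(X_0)^2\tau_A^p])$ does \emph{not} follow from the shift argument of Lemma~\ref{lemma:insideblock}: that argument, applied at level $i$, replaces $\tau_1$ by the first hit after time $i$ and picks up an additive $i^p\pi(g^2)$ (from $(\theta_a+i)^p\le 2^{p-1}(\theta_a^p+i^p)$). Feeding $a_i^2\asymp i^p\pi(g^2)+C$ back into $\sum_{i,j}\max(i,j)^{-p}a_ia_j$ makes the diagonal contribution $\sum_i i^{-p}\cdot i^p=\infty$, so the double series diverges and your bound on $\mathbb{E}_\pi[T^2]$ is vacuous.

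The paper handles the initial term by exactly the tool you dismissed: applying the Palm formula (Lemma~\ref{lemma:invariant_meas_formula_extended}) with $h(x_1,\ldots,x_k)=\bigl(\sum_{j=1}^{k\wedge n}f(x_j)\bigr)^2$, where $f=|g|$. This yields
\[
\alpha_0\,\mathbb{E}_\pi\Bigl[\Bigl(\sum_{j=1}^{\theta_a\wedge n}f(X_j)\Bigr)^2\Bigr]
=\mathbb{E}_a\Bigl[\sum_{i=1}^{\theta_a}\Bigl(\sum_{l=i}^{\theta_a\wedge(i+n-1)}f(X_l)\Bigr)^2\Bigr],
\]
and expanding the square, each product $f(X_l)f(X_{l'})$ is counted at most $\min(l,l',n)\le n$ times in the outer sum over $i$, so the right-hand side is at most $n\,\mathbb{E}_a[(\sum_{l=1}^{\theta_a}f(X_l))^2]$. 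The truncation at $n$ is precisely what prevents the ``unwanted $\theta_a$-factor'' you were worried about; no uniform-in-$i$ moment bound is needed. Replace your double-sum argument by this, and the proof closes.
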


\begin{proof}
Defining the blocks sums as (see equation (\ref{block}))
\begin{align*}
G_k&=\sum_{i={\theta_a}(k)+1}^{{\theta_a}(k+1)}g(X_i),
\end{align*}
(in this whole section we set $\sum\nolimits_a^b=0$ if $b<a$)
 $G_k$ is an i.i.d. sequence and one has 
\begin{align*}
\sum_{i=1}^ng(X_i)
&=\sum_{i=1}^{{\theta_a}\wedge n} g(X_i)+\sum_{k=1}^{l_n-1}G_k+\mathbb{1}_{{\theta_a}\leqslant n} \sum_{i={\theta_a}(l_n)+1}^n g(X_i)
\end{align*}
where $l_n$ is the number of times $Z_i$ visits $a$ before $n$, i.e.,
\begin{align}\label{def:l_n}
l_n=\sum_{i=1}^n  \mathbb{1}_{\{Z_i\in a\}}.
\end{align} 
As the chain has been split into independent blocks, the process  $L\mapsto \sum_{k=1}^L G_k$ 
is a martingale. The sequence $(l_n)$ is random and is expected to be of order $n$. 
Since $l_n\leqslant n$, following \cite{bertail2011}, page\,21, we have
 \begin{align*}
\Big| \sum_{i=1}^n g(X_i)\Big|
\leqslant\sum_{i=1}^{{\theta_a}\wedge n} f(X_i)+\max_{1\leqslant L\leqslant n} \Big|\sum_{k=1}^L G_k \Big|+\mathbb{1}_{{\theta_a}\leqslant n}\sum_{i={\theta_a}(l_n)+1}^{n} f (X_i),
 \end{align*}
 where $f=|g|$ (considering $f$ instead of $g$ will help later
for the treatment of the concerned terms). 
By the Minkowski inequality, denoting by $\|\cdot\|_2$ the $L_2(\mathbb {P}_\pi)$ norm, we have
\begin{align}\label{rosen1}
\Big\|\sum_{i=1}^ng(X_i)\Big\|_2
&\leqslant \Big\|\sum_{i=1}^{{\theta_a}\wedge n} f(X_i)\Big\|_2 
+\Big\|\max_{1\leqslant L\leqslant n}\Big|\sum_{k=1}^L G_k\Big|\Big\|_2
+\Big\| \sum_{i={\theta_a}(l_n)+1}^n f(X_i)\Big\|_2.
\end{align}
Using Doob's inequality, we have
\begin{align*}
\mathbb {E}_\pi\max_{1\leqslant L\leqslant n} \Big|\sum_{k=1}^L G_k\Big|^2
\leqslant 4 n\mathbb {E}_\pi[|G_1|^2]=  4 n\mathbb {E}_a\Big[\Big(\sum_{i=1}^{\theta_a} g(X_i)\Big)^2\Big],
\end{align*}
then, from Lemma~\ref{lemma:insideblock}, we get for every $p>2$ that there exist $\widetilde C$ such that
\begin{align*}
\mathbb {E}_\pi\max_{1\leqslant L\leqslant n} \Big|\sum_{k=1}^L G_k\Big|^2
\leqslant 4 n \widetilde C \left(\xi(p)^2\pi(g^2)+ \xi(p) \mathbb E_\pi[g(X_0)^2 \tau_A^p]\right)
\end{align*}
This is also a crude bound for the third term in (\ref{rosen1}) since
\begin{align*}
\mathbb {E}_\pi\Big[\Big(\sum_{i={\theta_a}(l_n)+1}^n f(X_i)\Big)^2\Big]
&\leqslant \mathbb {E}_\pi\Big[\Big(\sum_{i={\theta_a}(l_n)+1}^{{\theta_a}(l_n+1)} f(X_i)\Big)^2\Big]
= \mathbb {E}_a\Big[\Big(\sum_{i=1}^{\theta_a} f(X_i)\Big)^2\Big].
\end{align*}
Now we consider the first term in (\ref{rosen1}).  
Using Lemma~\ref{lemma:invariant_meas_formula_extended} with
\begin{align*}
h(x_1,\ldots x_k) =\Big(\sum_{j=1}^{k\wedge n} f(x_j)\Big)^2,
\end{align*}
we get
\begin{align*}
\mathbb {E}_\pi\Big[\Big(\sum_{j=1}^{\theta_a\wedge n} f(X_j)\Big)^2\Big]
&=  \alpha_0^{-1}\mathbb {E}_a\Big[\sum_{i=1}^{\theta_a}\Big(\sum_{j=i}^{\theta_a\wedge n} f(X_j)\Big)^2\Big]\\
&=    \alpha_0^{-1}\mathbb {E}_a\Big[\sum_{i=1}^{\theta_a\wedge n}\Big(\sum_{j=i}^{\theta_a\wedge n} f(X_j)\Big)^2\Big]\\
&\leqslant n   \alpha_0^{-1}\mathbb {E}_a\Big[\Big(\sum_{j=1}^{\theta_a} f(X_j)\Big)^2\Big]\\
& \leqslant n    \mathbb {E}_a\Big[\Big(\sum_{j=1}^{\theta_a} f(X_j)\Big)^2\Big].
\end{align*}
We conclude again with Lemma~\ref{lemma:insideblock}.
\end{proof}


\section{Proofs of section \ref{s3}}
\label{s32}

\subsection{Proof of Lemma \ref{lemma:regularizationrates}}
We start by proving (\ref{eq:L2regularization}). Define $ k =\lfloor s\rfloor$. From the Taylor formula with integral remainder applied to $g(t)=\psi(x-tu)$, we get
\begin{align*}
\psi(x-hu)-\psi(x)&=\sum_{j=1}^{k-1}\frac{h^j}{j!}g^{(j)}(0)+
\int_0^{h}g^{(k)}(t)\frac{(h-t)^{k-1}}{(k-1)!}dt\\
&=\sum_{j=1}^k\frac{h^j}{j!}g^{(j)}(0)+
\int_0^{h}(g^{(k)}(t)-g^{(k)}(0))\frac{(h-t)^{k-1}}{(k-1)!}dt.
\end{align*}
The first term is a polynomial in $u$ which vanishes after integration with respect to $K$ as by assumption, $K$ is orthogonal to the first non-constant polynomial of degree
$j\leq \lfloor  s\rfloor $. Using the chain rule to compute $g^{(k)}$ and using basic inequalities with some combinatorics, we obtain that there exists a constant $C$ (depending only on $k$ and $d$) such that for every $t\in \mathbb R$,
\begin{align*}
|g^{(k)}(t)-g^{(k)}(0)| \leqslant C |u|_1^k \sum_{l\in \mathcal P_k} | \psi^{(l)}(x-tu)-\psi^{(l)}(x)|  ,  
\end{align*}
where $\mathcal P_k=\{(l_1,\ldots l_d)\in \mathbb N^d\,:\, \sum_{i=1}^d l_i =  k\}$. It follows that
 \begin{align*}
\Big|\int_0^{h}(g^{(k)}(t)-g^{(k)}(0))\frac{(h-t)^{k-1}}{(k-1)!}dt\Big|
&\leqslant  \frac{h^{k-1}C}{(k-1)!} \sum_{l\in \mathcal P_k }  \int_0^{h}|\psi^{(l)}(x-tu)-\psi^{(l)}(x)|\,|u|_1^kdt.
\end{align*}
Hence
\begin{align*}
\Big|\int\left(\psi(x-hu)-\psi(x)\right)K(u)du\Big|
\leqslant  \frac{h^{k-1}C}{(k-1)!} \sum_{l\in \mathcal P_k }\int_0^{h}\int|\psi^{(l)}(x-tu)-\psi^{(l)}(x)|\, |u|_1^k \,|K(u)|\,du\,dt
\end{align*}
and by the generalized Minkowski inequality \citep[page 194]{folland:1999}\footnote
{For any nonegative measurable function $g(.,.)$ on $\mathbb R^{k+d}$, any $\sigma$-finite measures $\mu$ and $\nu $, and any $q\geqslant 1$,
\begin{align*}
\left(\int\left(\int g(y,x)d\mu (y)\right)^q d\nu (x)\right)^{1/q}&\leqslant \int\left(\int g(y,x)^qd\nu (x)\right)^{1/q} d\mu (y).
\end{align*}
}, 
\begin{align*}
\big\| \psi -\psi_{h} \big\|_{L_q(\pi)} 
&\leqslant \frac{h^{k-1}C}{(k-1)!} \sum_{l\in \mathcal P_k } \int \int\left(\int|\psi^{(l)}(x-tu)-\psi^{(l)}(x)|^q|u|_1^{qk}|K(u)|^q \mathbb{1}_{0\leqslant t\leqslant h}\pi(x)dx\right)^{1/q} \!\!\!\! du\,dt\nonumber\\
&\leqslant   \frac{h^{k-1}C}{(k-1)!}M_1\pi_\infty^{1/q}   \sum_{l\in \mathcal P_k}\int\left(|tu|_1^{q(s-k)} |u|_1^{qk}|K(u)|^q \right)^{1/q} \mathbb{1}_{0\leqslant t\leqslant h} du\,dt\nonumber\\
&=  \frac{ h^{s}C}{(k-1)!(s-k+1)} M_1\pi_\infty ^{1/q} \,  \#\{\mathcal P_k \} \int |u|_1^{s}|K(u)| du\nonumber.
\end{align*}
This implies (\ref{eq:L2regularization}). 

 To show (\ref{eq:L1regularization}), it suffices to provide an upper-bound proportional to $h^{s}$ and another one proportional to $h^r$. Because $|\pi(\psi-\psi_h)| \leqslant \pi(|\psi-\psi_h|)$, applying (\ref{eq:L2regularization}) with $q=1$, we obtain the upper-bound $C_1M_1 \pi_\infty h^{s}$. By Fubini's theorem and using the symmetry about $0$ of $K$, it holds
 \begin{align}\label{eq:fubini}
\int \pi(x)\psi_{h}(x)dx= \int \psi(x)\pi_{h}(x)dx.
\end{align} 
Hence, introducing the probability density $\widetilde \psi(y) = \left(\int |\psi(x)|dx\right)^{-1} |\psi(y)|$, $y\in\mathbb R^d$, we find
\begin{align*}
\left|  \int \pi(x)(\psi(x)-\psi_{h}(x))dx\right| 
&=\left|  \int \psi(x)(\pi(x)-\pi_{h}(x))dx\right| \\
&\leqslant  \left(\int |\psi(x)|dx\right)    \int \widetilde \psi(x) \left| \pi(x)-\pi_{h}(x) \right| dx\\
&=  \left(\int |\psi(x)|dx \right)      \left\|\pi -\pi_{h} \right\|_{L_1(\widetilde \psi) }.
\end{align*}
Applying (\ref{eq:L2regularization}) with $\widetilde \psi$ and $\pi$ in place 
of $\pi$ and $\psi$ respectively, we get the bound $\widetilde C_1 M_2 \psi_\infty  h^r$, 
for some $\widetilde C_1>0$ depending on $K$ and $r$. Equation (\ref{eq:L1regularization})
is then deduced from these two bounds.

\subsection{Proof of Proposition \ref{theorem:preservationprop}} 
For any $f$ and $\widetilde f$ in $  \mathcal F_1\times \ldots \times  \mathcal F_d$, we have
\begin{align}
|\Psi(f) - \Psi(\widetilde f))|&\leqslant\sum_{j=1}^d  C_j(F) |f_j-\widetilde f_{j}| .\label{eq:boundL2Q}
\end{align}
Let us first prove that $G$ is an envelope for $\mathcal G$. Applying (\ref{eq:boundL2Q}) with $f_0$ in place of $\widetilde f$, we get that $ 2 \sum_{j=1}^d C_j(F) F_j $ is an envelope for the class 
$\mathcal G - \Psi (f_0)$. As a result $ G$ is an envelope for the class $\mathcal G$. 
The envelope property is proved.

Let $Q$ be such that $Q (G^2) <+\infty$. Define the following probability measures on $\mathcal X$,
\begin{align*}
d Q_j = q_j^{-2}   C_j(F)^2 \, dQ ,\qquad \text{with } q_j^2 = \int  C_j(F)^2\,  dQ .
\end{align*} 
Note that $q_j<+\infty $ is implied by $Q (G^2) <+\infty$. 
Let $\mathcal C_j$ denote a set of functions forming an $\epsilon \|F_j\|_{L_2(Q_j)} $-covering 
of the metric space $(\mathcal F_j, L_2(Q_j))$. 
For $f=(f_1,\ldots f_d) \in\mathcal F_1\times \ldots \times  \mathcal F_d$, 
there exists $\widetilde f= (\widetilde  f_{1},\ldots\widetilde f_{d})\in \mathcal C_1\times\ldots\times\mathcal C_d$ 
such that, using (\ref{eq:boundL2Q}) and the Minkowski inequality,
\begin{align*}
\|\Psi(f)-\Psi(\widetilde f) \|_{L_2(Q)}& \leqslant\sum_{j=1}^d \| (f_j-\widetilde f_{j}) {C_j}(F)\|_{L_2( Q)} \\
&\leqslant \sum_{j=1}^d  q_j \|f_j -\widetilde f_{j}  \|_{L_2(Q_j)} \\
&\leqslant\epsilon\sum_{j=1}^d q_j\|F_j\|_{L_2(Q_j)}.
\end{align*}
The number of possible $d$-uplets $ (\widetilde  f_{1},\ldots\widetilde f_{d})$ is at most 
$\prod_{j=1}^d \#\{ \mathcal C_j\}$, thus
\begin{align*}
\mathcal N \Big(\mathcal G,\,L_2(Q),   \epsilon  \sum_{j=1}^d q_j\|F_j\|_{L_2(Q_j)}\Big)
\leqslant \prod_{j=1}^d \mathcal N \left(\mathcal F_j ,\, L_2(Q_j),\, \epsilon\|F_j\|_{L_2(Q_j)}\right).
\end{align*}
We have
\begin{align*}
\int G(x)^2dQ&\geqslant \int |\Psi (f_0)|^2 dQ+4 \sum_{j=1}^d \int  F_j^2 {C_j}(F)^2\, dQ\\
&\geqslant  \sum_{j=1}^d \int  F_j^2 {C_j}(F)^2\, dQ\\
&=\sum_{j=1}^d q_j^2\|F_j\|_{L_2(Q_j)}^2.
\end{align*}
Combining this with the Schwartz inequality gives 
\begin{align*}
\sum_{j=1}^d  q_j  \|F_j\|_{L_2(Q_j)} 
&\leqslant d ^{1/2} \left(\sum_{j=1}^d  q_j ^2 \|F_j\|_{L_2(Q_j)}^2  \right)^{1/2} 
\leqslant d^{1/2} \|G\|_{L_2(Q)}.
\end{align*}
Hence
\begin{align*}
\mathcal N \left(\mathcal G, \, L_2(Q),\epsilon d^{1/2}  \|G\|_{L_2(Q)}\right)\leqslant  \prod_{j=1}^d 
\mathcal N \left(\mathcal F_j ,\, L_2(Q_j),\,\epsilon\|F_j\|_{L_2(Q_j)}\right).
\end{align*} 
The VC class assumption on $\mathcal F_j$, with characteristics $(A_j,v_j)$, implies that the right hand side 
is smaller than $\varepsilon^{-(v_1+\dots +v_d)}A_1^{v_1}\dots A_d^{v_d}$. This concludes the proof.

\subsection{Proof of Proposition \ref{proposition:pollard+nollan}}
The first statement is proved in \cite{wellner1996}, Example~2.5.4.
The second statement, under (\ref{assump:kernel})$(i)$, is given by Lemma 22, (i), in \cite{nolan+p:1987} (the definitions are different than the ones we use; as stated page 789, their ``Euclideanity'' implies VC). 
Under (\ref{assump:kernel}) $(ii)$, invoking Lemma 22, (ii), in \cite{nolan+p:1987}, the class of real valued functions 
$\{ x \mapsto K^{(0)} ( h^{-1}(y_1-x_1))\, : \, y_1\in \mathbb R,\, h>0 \}$ 
is a uniformly bounded VC class of function. 
Then, since $\Psi (z)=z_1\dots z_d$ satisfies (\ref{def:locally_lipschitz}), Proposition \ref{theorem:preservationprop} implies the conclusion.

\subsection{Proof of Proposition \ref{corollary:vcclass_example}}
We begin by applying Proposition \ref{theorem:preservationprop} to  
$\mathcal F_1=\{ (t,x) \mapsto  \mathbb{1}_{t\leqslant M} \, : \, M\in\mathbb R \}$ 
and $\mathcal F_2=\{ (t,x) \mapsto  K ( h^{-1}(y-x))\, : \, y\in \mathbb R^d,\, h>0  \}$ (both classes are VC by Proposition \ref{proposition:pollard+nollan}),
with $\Psi(z_1,z_2)=z_1z_2$ which satisfies (\ref{def:locally_lipschitz}). The resulting class
\begin{align*}
\{ (t,x) \mapsto  \mathbb{1}_{t\leqslant M} K ( h^{-1}(y-x))\, : \, y\in \mathbb R^d,\, h>0 ,\, M\in\mathbb R \}
\end{align*}  
is uniformly bounded VC. 
Then we can consider the product of $\{(t,x)\mapsto t\}$ and $\mathcal F_3$. 
As for every $z_1,\tilde z_1\in [- A_1,A_1] $ and  $z_2,\tilde z_2\in [- A_2,A_2] $, we have
\begin{align*}
| z_1z_2 - \tilde z_1\tilde z_2| \leqslant A_2 |z_1 -\tilde z_1|+  A_1 |z_2 - \tilde z_2| ,
\end{align*}
this yields a VC class with envelope $(t,x)\mapsto 2 ((1\vee K_\infty  ) |t| +  (1\vee |t|) K_\infty )$.

\subsection{Proof of Theorem \ref{prop:unif_conv_density}}
We have to study
\begin{align*}
\widehat \pi(y) &=n^{-1}\sum_{i=1}^nK_i(y),
\end{align*}
where 
\begin{align*}
K_i(y)= K_{h_n}(y-X_i).
\end{align*}
As in the proof of Theorem~\ref{rosenth}, we will use the split chain defined in section~\ref{s2}, 
$\theta_a(k)$ will stand for the time of the $k$-th return to the set $a$ ($\theta_a(1)>0$), and $l_n$, defined in (\ref{def:l_n}), is the number
of such returns before $n$. 

Recall that $ \alpha_0=\mathbb {E}_a[\theta_a]$. Using the stationarity and equation~(\ref{bert0}),  its expectation under $\pi$ 
can be computed as
\begin{align*}
\mathbb E_\pi [l_n] =\sum_{k=1}^n\mathbb E_\pi [\mathbb{1}_{Z_k\in a}]= n\mathbb E _\pi[ \mathbb{1}_{\{Z_0\in a\}}] =\frac{n}{\alpha_0} .
\end{align*} 
Let us now evaluate the variance of $l_n$. From Theorem \ref{rosenth} with with $g(z)=(\mathbb{1}_{\{z\in a\}}-\alpha_0^{-1} )/n$, there exists $C>0$ such that, for any $n\geqslant 1$,
\begin{align*}
&\mathbb {E}_\pi\Big[\Big(\sum_{i=1}^n g(X_i)\Big)^2\Big]
\leqslant n C \left(\pi(g^2)+  \mathbb E_\pi[g(X_0)^2 \tau_A^{p_0}]\right).
\end{align*}
Because 
\begin{align*}
\mathbb E_\pi [\mathbb{1}_{\{Z_0\in a\}} \tau_A^{p_0}] &= \int \mathbb E_z[\tau_A^{p_0}] \mathbb{1}_{\{ z\in a\}} d\pi(z) = \mathbb E_a [\tau_A^{p_0}] \pi(a)<+\infty,
\end{align*}
we conclude that there exists some constant $\widetilde C>0$ such that
\begin{align}\label{lnconv}
 \mathbb {E}_\pi[ (l_n/n - \alpha_0^{-1})^2]  \leqslant \widetilde  C n^{-1}.
\end{align} 
Consequently,
\begin{align*}
\sup_{y\in\mathbb R^d} \left|\left(1-\frac {\alpha_0 (l_n-1)}{n}\right) \pi_{h_n}(y) \right| 
\leqslant \left| 1-\frac {\alpha_0 (l_n-1)}{n}\right|   
\sup_{y\in\mathbb R^d} \left|\pi(y) \right|  \longrightarrow 0,\quad \text{in $\mathbb P_\pi$-probability.}
\end{align*}
Hence, in place of $\widehat \pi(y) -\pi_{h_n}(y)$, we can rather study 
\begin{align*}
\widehat T (y) =  \widehat \pi(y)- \frac {\alpha_0 (l_n-1) }{n} \pi_{h_n}(y)
\end{align*}
which will have a simpler expansion. 
The idea of the proof is to use the results available for the independent case.
Since terms inside one block are not independent, the trick is to notice that we can consider 
the case when only one term in each block is picked at random. 
More precisely if $\Delta_k={\theta_a}(k+1)-{\theta_a}(k)$ and $I_k$ is a uniformly chosen point 
among $\{{\theta_a}(k)+1,\dots,{\theta_a}(k+1)\}$, the variables
\begin{align*}
\widetilde K_{k}(y)=K_{I_k}(y),\qquad k=1,\ldots l_n-1,
\end{align*}
satisfy
\begin{align*}
\mathbb E[\widetilde K_k(y)|\mathscr F_\infty]=\Delta_k^{-1} \sum_{i={\theta_a}(k)+1}^{{\theta_a}(k+1)} K_i(y),
\end{align*}
where $\mathscr F_\infty $ denote the $\sigma$-field generated by the whole chain. We can rewrite
\begin{align*}
\widehat T(y)
&=n^{-1}\sum_{i=1}^{{\theta_a}(1)} K_i(y)+n^{-1}\sum_{k=1}^{l_n-1}
\left(\left( \sum_{i={\theta_a}(k)+1}^{{\theta_a}(k+1)} K_i(y)\right) -\alpha_0 \pi_{h_n}(y)\right)
+n^{-1}\sum_{i={\theta_a}(l_n)+1}^n K_i(y)\\
&=n^{-1}\sum_{i=1}^{{\theta_a}(1)} K_i(y)
+\mathbb E\left\{n^{-1}\sum_{k=1}^{l_n-1} \left(\Delta_k \widetilde K_k(y) -\alpha_0 \pi_{h_n}(y) \right) 
\Big |\mathscr F_\infty \right\}
+n^{-1}\sum_{i={\theta_a}(l_n)+1}^n K_i(y)\\
&=\widehat T_1(y)+\mathbb E[Z_n(y)|\mathscr F_\infty]+\widehat T_2(y).
\end{align*}
Concerning the boundary terms $\widehat T_1$ and $\widehat T_2$, we have
\begin{align*}
\mathbb E_\pi\Big[\sup_{y\in\mathbb R^d} |\widehat T_1(y)|\Big] \leqslant  n^{-1} 
\mathbb {E}_\pi\Big[\sup_{y\in \mathbb R^d} \sum_{i=1}^{\theta_a} |K_{h_n}(y-X_i)|\Big]
\leqslant  n^{-1}h_n^{-d} K_\infty \mathbb {E}_\pi[{\theta_a}]   ,  
\end{align*}
and similarly,
\begin{align*}
\mathbb E_\pi\Big[\sup_{y\in\mathbb R^d} |\widehat T_2(y)|\Big]&\leqslant n^{-1} 
\mathbb {E}_a\Big[\sup_{y\in \mathbb R^d} \sum_{i=1}^{\theta_a} |K_{h_n}(y-X_i)|\Big]
=n^{-1}h_n^{-d} K_\infty \mathbb {E}_a[{\theta_a}].
\end{align*}
We now consider the term $\mathbb E [Z_n(y)|\mathscr F_\infty]$. From the definition of $I_1$ and using (\ref{bert0}), for any measurable function $g$ with $\pi(g)<+\infty$, we have
\begin{align}\label{eux}
\mathbb E_a [\Delta_1 g(X_{I_1})]=\mathbb {E}_a\Big[{\theta_a} \frac1{\theta_a}\sum_{i=1}^{\theta_a} g(X_i)\Big]=\alpha_0\pi(g).
\end{align}
In particular, 
$\alpha_0 \pi_{h_n}(y) =\mathbb E_a[\Delta_1 \widetilde K_1(y) ]$. It follows that
\begin{align*}
Z_n(y)  = n^{-1}\sum_{k=1}^{l_n-1} \left( \Delta_k \widetilde K_k(y)  - \mathbb E_a[\Delta_1 \widetilde K_1(y) ]  \right)  .
\end{align*}
We are planning to apply Theorem~\ref{theorem:gine+g}, but the problems for now are that $l_n$
is random and $\Delta_k$ is not bounded. Define
\begin{align}\label{choiceofM}
 m_n = (nh_{n}^{-d}/\log (n))^{1/(2p_0-1)}.
\end{align} 
We shall analyse the terms when $\Delta_k\leqslant m_n$ and $\Delta_k>m_n$ independently. The reason why such a value of $m_n$ is considered shall be made clear in the next few lines (below equation (\ref{decideM})). We have
\begin{align}
Z_n(y)\label{znz1nz2n}
&= n^{-1}\sum_{k=1}^{l_n-1} \left( \mu_k \widetilde K_k(y) -\mathbb E _a [\mu_1 \widetilde K_1(y) ] \right)
+ n^{-1}\sum_{k=1}^{l_n-1}\left( \nu_k \widetilde K_k(y)-\mathbb E_a [\nu_1 \widetilde K_1(y)]\right) \\
\mu_k&=\Delta_k\mathbb{1}_{\Delta_k\leqslant m_n}\nonumber\\
\nu_k&=\Delta_k\mathbb{1}_{\Delta_k>m_n}\nonumber.
\end{align}
 Choose $\eta_n= \sqrt {\log(n) /n}$, and set $l_n^0=\lfloor n\alpha_0^{-1}\rfloor $, $l_n^-=\lfloor n (\alpha_0^{-1}-\eta_n )\rfloor $,  $l_n^+=\lfloor n(\alpha_0^{-1}+\eta_n )\rfloor $. 
By construction, as $n\rightarrow +\infty$,
\begin{align*}
n^{1/2} (l_n^+-\alpha_0^{-1}) \rightarrow +\infty, \quad   n^{1/2} (l_n^--\alpha_0^{-1}) \rightarrow -\infty.
\end{align*}
Therefore, from (\ref{lnconv}), we obtain that the event $l_n^-\leqslant l_n-1\leqslant l_n^+$ 
has probability going to $1$. Suppose from now on this event is realized. The number 
\begin{align*}
l'_n=\big((l_n-1)\wedge l_n^+)\big)\vee l_n^-
\end{align*}
is equal to $l_n-1$. Since $l'_n$ and $l^0_n$ both belong to $[l^-_n,l^+_n]$, 
for every sequence $A_k$, $k=1,2,\ldots$, 
it holds that
\begin{align*}
\Big|n^{-1}\sum_{k=1}^{l_n'}  A_k\Big|
& \leqslant n^{-1} \Big| \sum_{k=1}^{l_n^0} A_k\Big|  + n^{-1} \sum_{k=l_n^-}^{l_n^+} |A_k|.
\end{align*}
Taking $A_k = \mu_k \widetilde K_k(y) -\mathbb E_a[\mu_k \widetilde K_k(y) ]$, this gives
\begin{align}\nonumber
n^{-1}\sum_{k=1}^{l_n'}  &\left( \mu_k \widetilde K_k(y) -\mathbb E_a[\mu_k \widetilde K_k(y) ] \right)\\
& \leqslant  n^{-1} \Big| \sum_{k=1}^{l_n^0} \big( \mu_k \widetilde K_k(y) -\mathbb E_a[\mu_k \widetilde K_k(y) ]\big) \Big| 
+ n^{-1} \sum_{k=l_n^-}^{l_n^+} |\mu_k \widetilde K_k(y) -\mathbb E_a[\mu_k \widetilde K_k(y) ] |.
\label{bound:br1}
\end{align}
We treat the first term of (\ref{bound:br1}) by applying Theorem \ref{theorem:gine+g} 
with $\xi_i = (\Delta_i, X_{I_i} )  $, $i=1,2,\ldots$, and the class of functions 
$\{(t,x)\mapsto t \mathbb{1}_{\{t\leqslant m_n \}} K(h_n^{-1}(x-y) )\, :\, y\in \mathbb R^d \}$. 
This class being  a subclass of (\ref{grosvc}) which is VC with 
envelope $F(t,x) = 2((1\vee K_\infty  )|t|+(1\vee |t| )  K_\infty)$ and characteristic $(A,v)$ (in virtue of Proposition \ref{corollary:vcclass_example}).
Hence we can apply Theorem~\ref{theorem:gine+g}. We have to estimate the various quantities 
involved in (\ref{einmasas1}) and (\ref{einmasas2}). On the first hand,
\begin{align*}
 \sup_{f\in \mathcal F} \mathbb E[f(\xi_1)^2])
&= \sup_{y\in\mathbb R^d}\mathbb E_\pi  [\Delta_1^2\mathbb{1}_{\Delta_1\leqslant m_n}K(h_n^{-1}( X_{I_1}-y )  )^2]\\
&\leqslant m_n\sup_{y\in\mathbb R^d}\mathbb E_\pi [\Delta_1K(h_n^{-1}( X_{I_1}-y ))^2]\\
&= m_n\sup_{y\in\mathbb R^d}\mathbb E_a \Big[\sum_{i=1}^{\theta_a}K(h_n^{-1}(X_{i}-y ))^2\Big]\qquad  \text{(cf.  (\ref{eux})})\\
&=m_n \alpha_0 \sup_{y\in\mathbb R^d}\mathbb {E}_{\pi}[K(h_n^{-1}( X_{1}-y )  )^2]\qquad \text{(cf.  (\ref{taupui})})\\
&\leqslant m_n \alpha_0 h_n^{d}\pi_\infty\int K(x)^2dx\\
&=c^2 m_n h_n^d,\qquad \qquad c^2=\alpha_0 \|\pi\|_\infty\int K(x)^2dx.
\end{align*} 
On the other hand, using $(1\vee |t|)\leqslant 1+|t|$ and then (\ref{c0tau0prime}), we find
\begin{align*}
\mathbb E[F(\xi_1)^2]\leqslant 2( (1+K_\infty ) \mathbb E |\Delta_1| +K_\infty   )  \leqslant C (1+ \sup_{x\in A} \mathbb {E}_{x}[\tau_A^2]),
\end{align*}
for some $C>0$.
We choose  
\begin{align*}
\sigma^2&=c^2 m_n h_n^d.
\end{align*} 
With this choice of $\sigma$, equation~(\ref{einmasas1}) is satisfied and (\ref{einmasas2}) will be satisfied  if
\begin{align*}
&c^2m_nh_n^d\geqslant  \frac{16 vn^{-1}}{2} \log\Big(A^2\max\big(1,\mathbb E[F(\xi_1)^2]/ c^2m_nh_n^{d}\big)\Big)m_n^2K_\infty^2.
\end{align*}  
Since $h_n\rightarrow 0$ and $m_n\rightarrow +\infty$, 
this condition will be met for $n$ large enough if, as $n\rightarrow\infty$,
\begin{align*}
&m_n\leqslant  \frac{nh_n^d}{\log(h_n^{-1})}.
\end{align*}  
This is equivalent to
\begin{align}
& \frac{nh_n^{-d}}{\log (n)}\ll \Big(\frac{nh_n^d}{\log(h_n^{-1})}\Big)^{2p_0-1}
\end{align} 
which is
\begin{align}
& 1 \ll \Big(\frac{nh_n^{dp_0/(p_0-1)}}{\log( n)}\Big)^{2(p_0-1)}\Big(\frac{\log (n)}{\log(h_n^{-1})}\Big)^{2p_0-1}.
\end{align} 
This is satisfied indeed since the first term tends to infinity by assumption, and the fact that
$nh_n^{dp_0/(p_0-1)}\rightarrow+\infty$ implies that the second one is bounded from below.

Computing the bound given in Theorem~\ref{theorem:gine+g}, multiplying by $(nh_n^d)^{-1}$, we obtain that
\begin{align}\nonumber
\mathbb E_\pi \sup_{y\in\mathbb R^d}
\Big| n^{-1}\sum_{k=1}^{l_n^0}\mu_k\widetilde K_k(y)-\mathbb E_a[\mu_k\widetilde K_k(y)]\Big| 
&\leqslant (nh_n^d)^{-1}C_0\sqrt {vl_n^0c^2m_nh_n^d \log\Big(A\big(1\vee\tfrac{\beta}{cm_n^{1/2}h_n^{d/2}}\big)\Big)}
\end{align} 
But since
\begin{align*}
 m_nh_n^d = \left(\frac n{\log( n)}\right)^{1/(2p_0-1)}h_n^{2d(p_0-1)/(2p_0-1)},
\end{align*} 
this quantity is larger than some negative power of $n$ (cf. (\ref{nhdp})) 
and using this for bounding the logarithm, we get
\begin{align}
\mathbb E_\pi \sup_{y\in\mathbb R^d}
\Big| n^{-1}\sum_{k=1}^{l_n^0}\mu_k\widetilde K_k(y)-\mathbb E_a[\mu_k\widetilde K_k(y)]\Big| 
&\leqslant C' B(n,h_n,m_n) \label{bound:br2}
\end{align} 
for some $C'>0$ and where
\begin{align*}
B(n,h,m) = \sqrt{\frac{m\log(n)}{n  h^d}}.
\end{align*}
The second term of (\ref{bound:br1})  is smaller than
\begin{align*}
\Big| n^{-1} \sum_{k=l_n^-}^{l_n^+} |\mu_k \widetilde K_k(y) -\mathbb E_a [\mu_k \widetilde K_k(y) ] |
& - \mathbb E_a|\mu_1 \widetilde K_1(y) -\mathbb E_a[\mu_1 \widetilde K_1(y) ]| \Big| \\
&+ n^{-1} (l_n^+-l_n^-) \mathbb E_a(|\mu_1 \widetilde K_1(y) -\mathbb E_a[\mu_1 \widetilde K_1(y) ]|).
\end{align*}
Consider the class 
\begin{align*}
\left\{ (\beta,x)\mapsto \left| \beta \mathbb{1}_{\{\beta\leqslant m_n \}}  K(h^{-1}( x-y )  ) 
-\mathbb E_a[\mu_1  K(h^{-1}( X_1-y )  )]  \right| \, : \, y\in \mathbb R^d, \, h>0  \right\}.
\end{align*}
This class is included in the larger class of functions $z\mapsto | f(z) - w|$, where $f$ describes the VC class (\ref{grosvc}), and $w\in\mathbb R$ 
is ranging over the segment $A= [-\alpha_0  K_\infty, \alpha_0 K_\infty]$.
This larger class is VC because, (i) the class $\{ f(z) - w \}$ remains VC and (ii) 
the transformation $x\mapsto |x|$ being Lipschitz, we can apply Proposition~\ref{theorem:preservationprop}. 
This is basically the same as before, with the only difference that now 
$l_n^+-l_n^-\leqslant 3\eta_n n  $, we obtain that there exists a constant $C>0$ such that 
\begin{align}
\mathbb E_\pi\sup_{y\in \mathbb R^d}  \Big| n^{-1}  
\sum_{k=l_n^-}^{l_n^+} |\mu_k \widetilde K_k(y) -\mathbb E_\pi[\mu_1 \widetilde K_1(y) ] | \Big|
&\leqslant C\left( \sqrt\eta_n   B(n,h_n,m_n) +\eta_n  \mathbb E_\pi|\mu_1 \widetilde K_1(y) | \right).\label{bound:br3}
\end{align}
From (\ref{eux}), we know that
\begin{align*}
\mathbb E_\pi |\mu_1  \widetilde K_1(y) |\leqslant \mathbb E_a\left[\Delta_1  |\widetilde K_1(y)|\right]
= \alpha_0\int |K_{h_n}(y-x)| \pi(x)dx \leqslant \alpha_0 \pi_\infty  \int |K(u)|du.
\end{align*}
 Then, bringing together (\ref{bound:br1}), (\ref{bound:br2}) and (\ref{bound:br3}) gives that, for some $C>0$,
\begin{align}
\mathbb E_\pi \sup_{y\in \mathbb R^d}  \Big  |n^{-1}\sum_{k=1}^{l_n'}  \left( \mu_k \widetilde K_k(y) -\mathbb E_a[\mu_k \widetilde K_k(y) ] \right)\Big|
& \leqslant C  B(n,h_n,m_n)  \label{bound:br4}
\end{align}
because $\eta_n\ll B(n,h_n,m_n)$ and $\eta_n\ll 1$.
Concerning the second term in (\ref{znz1nz2n}), since $l'_n\leqslant n$ and by Lemma \ref{lemma:tau}, we have
\begin{align}\nonumber
\mathbb E_\pi\Big[\sup_{y\in \mathbb R^d} \big| n^{-1} \sum_{k=1}^{l_n'} \nu_k \widetilde K_k(y)\big|\Big]
&\leqslant K_\infty h_n^{-d}\mathbb E_\pi\Big[ n^{-1} \sum_{k=1}^{n} \nu_k \Big]\\
&= K_\infty h_n^{-d}\mathbb E\pi\Big[ {\theta_a}\mathbb{1}_{\theta_a>m_n} \Big]\nonumber\\
&\leqslant K_\infty h_n^{-d}m_n^{-(p_0-1)}\mathbb E_\pi\Big[ {\theta_a}^{p_0} \Big]\nonumber\\
&\leqslant K_\infty h_n^{-d}m_n^{-(p_0-1)}\lambda_0^{-p_0}\frac{e^{\lambda_0}}{ (e^{\lambda_0/p_0} -1)^{p_0}}  \sup_{x\in A} \mathbb {E}_{x}[\tau_A^{p_0}]. \label{bound:br5}
\end{align}
Bringing together (\ref{znz1nz2n}), (\ref{bound:br4}), (\ref{bound:br5}), we finally get, for some $C>0$,
\begin{align}\label{decideM}
\mathbb E_\pi \Big[\sup_{y\in \mathbb R^d} \big| n^{-1} \sum_{k=1}^{l_n'} \Delta_k \widetilde K_k(y)\big|\Big]
&\leqslant   C \left( B(n,h_n,m_n) +h_n^{-d}m_n^{-(p_0-1)}\right) .
\end{align}
The value of $m_n$ that balances these terms together is given by (\ref{choiceofM})
and we  obtain that there exists $C>0$ such that
\begin{align*}
\mathbb E_\pi \Big[\sup_{y\in \mathbb R^d} \big| n^{-1} \sum_{k=1}^{l_n'} \Delta_k \widetilde K_k(y)\big|\Big]
&\leqslant   C\left( \frac { \log (n) }{ n h_n ^{dp_0/(p_0-1)}  }  \right)^{(p_0-1)/(2p_0 - 1)} .
\end{align*}
By assumption, this term goes to $0$ as $n\rightarrow +\infty$. Let $\epsilon>0$, we have that
\begin{align*}
\mathbb P_\pi\Big(\sup_{y\in \mathbb R^d}\big|\mathbb E[Z_n(y)|\mathscr F_\infty]\big|\geqslant\epsilon\Big) 
&\leqslant \mathbb {P}_\pi\Big(\mathbb E[\sup_{y\in \mathbb R^d}|Z_n(y)|\ |\mathscr F_\infty]  
\geqslant \epsilon \Big) \\
&\leqslant\mathbb P_\pi\Big(
\mathbb E\big[\sup_{y\in \mathbb R^d}|Z_n(y)|\,|\mathscr F_\infty\big]\geqslant\epsilon,\, l_n-1=l'_n\Big) 
+ \mathbb {P}_\pi( l_n-1\neq  l'_n )\\
&\leqslant \epsilon ^{-1} 
\mathbb E_\pi\Big[\mathbb E[\sup_{y\in \mathbb R^d}|Z_n(y)|\,|\mathscr F_\infty]\mathbb{1}_{\{ l_n-1= l'_n\}} \Big] 
+ \mathbb {P}_\pi( l_n-1\neq  l'_n )\\
&= \epsilon ^{-1}\mathbb E_\pi \Big[\sup_{y\in \mathbb R^d} |Z_n(y)| \mathbb{1}_{\{ l_n-1= l'_n \}}\Big] 
+ \mathbb {P}_\pi( l_n-1\neq  l'_n )\\
&\leqslant\epsilon ^{-1}\mathbb E_\pi
\Big[\sup_{y\in \mathbb R^d} \big|n^{-1}\sum_{k=1}^{l_n'} \Delta_k \widetilde K_k(y)\big|\Big] 
+ \mathbb {P}_\pi( l_n-1\neq  l'_n ).
\end{align*}
Then we finish the proof by recalling that $  l_n- 1=    l'_n$ whenever $l_n^- \leqslant l_n-1\leqslant l_n ^+ $, which has probability going to $1$.

\subsection{Proof of Corollary \ref{prop:coro:lowerbound_density}}
Without loss of generality, because $h_n\rightarrow 0$, we can assume that $K(u)=0$ for every $|u|\geqslant 1$.  Theorem~\ref{prop:unif_conv_density} implies that 
\begin{align*}
\inf_{y \in  Q}  \widehat \pi(y) \geqslant \inf_{y\in  Q }  \pi_{h_n}(y) - \epsilon_n ,
\end{align*} 
where $\epsilon_n = \sup_{y\in \mathbb R^d} |\widehat \pi(y)- \pi_{h_n}(y) | \rightarrow 0$, in $\mathbb P_\pi$-probability. Define, for any $x\in Q$ and $h>0$,
\begin{align*}
&b(x,h)=\inf_{y\in Q , \, |y-x|\leqslant h } \pi(y),\\
&M(x,h)=\sup_{y\in Q,\, |y-x|\leqslant h} \pi(y).
\end{align*}
Let $K = K_+ +K_-$ be the decomposition of $K$ with respect to the non-negative part and the negative part. Let $x\in Q$, for every $h>0$, we have
\begin{align*}
\pi_h (x)&=\int \pi(x-hu)  K(u) du \\
&\geqslant b(x,h) \int  \mathbb{1}_{\{x-hu\in Q \}} K_+(u)du +M(x,h) \int  \mathbb{1}_{\{x-hu\in Q \}} K_-(u) du\\
&= b(x,h) \int \mathbb{1}_{\{x-hu\in Q \}} K(u) du +(M(x,h)-b(x,h)) \int \mathbb{1}_{\{x-hu\in Q \}} K_-(u) du\\
&\geqslant  b \int  \mathbb{1}_{\{x-hu\in Q \}} K(u) du -\sup_{x\in Q} |M(x,h)-b(x,h)|,
\end{align*}
By virtue of Heine's theorem, $\pi$ is uniformly continuous on $Q$, hence $\sup_{x\in Q} |M(x,h)-b(x,h)|\rightarrow 0 $ as $h\rightarrow 0$. Consequently, as $h_n\rightarrow 0$, we have for every $\epsilon>0$, that $\inf_{x\in Q} \pi_{h_n} (x)\geqslant b c -\epsilon$. Choosing $\epsilon$ small enough and using that $\epsilon_n\rightarrow 0 $, in $\mathbb P_\pi$-probability, gives the statement.


\section{Changing the initial measure}\label{s33}

Appendix \ref{s31} focuses on Markov chains that either starts from their atom $a$, e.g., Lemma \ref{lemma:insideblock}, or from their invariant measure $\pi$, e.g., Theorem \ref{rosenth}. Some link between the underlying probabilities $\mathbb P_a$ and $\mathbb P_\pi$  is provided in Lemma \ref{lemma:invariant_meas_formula_extended}.  
The following lemma turns out to be a useful ingredient to extend convergences in $\mathbb P_\pi$-probability 
to convergences in $ \mathbb P_\nu$, $\nu$ being any measure absolutely continuous with respect to $\pi$.

\begin{lemma}\label{lemma:initial_measure}
Let $(X_i)_{i\in\mathbb N}$ be a Markov chain and let $\nu$ be a probability measure absolutely continuous with respect to $\pi$. Suppose that $f: \cup_{n\geqslant 1} \mathbb R^n \rightarrow \mathbb R^+$ is a bounded measurable function such that $\mathbb E_\pi f(X_1,\ldots X_n) \rightarrow 0$ as $n\rightarrow +\infty$, then
\begin{align*}
\mathbb E_\nu f(X_1,\ldots X_n) \rightarrow 0.
\end{align*}
\end{lemma}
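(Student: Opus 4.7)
The plan is to reduce everything to an elementary measure-theoretic computation via the Radon--Nikodym derivative $g = d\nu/d\pi$, which exists and satisfies $g \geq 0$, $\int g \, d\pi = 1$ by the hypothesis $\nu \ll \pi$. Setting $\phi_n(x) = \mathbb E_x[f(X_1,\ldots,X_n)]$, the Markov property gives the identities
\begin{align*}
\mathbb E_\nu f(X_1,\ldots,X_n) = \int \phi_n(x)\, g(x)\,\pi(dx), \qquad \mathbb E_\pi f(X_1,\ldots,X_n) = \int \phi_n(x)\,\pi(dx),
\end{align*}
so that the hypothesis translates to $\int \phi_n\, d\pi \to 0$ and the target is $\int \phi_n g \, d\pi \to 0$. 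Observe also that $0 \leq \phi_n \leq \|f\|_\infty$.

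The only subtlety is that $g$ need not be bounded; it only lies in $L^1(\pi)$. I would therefore use a truncation argument. For any $M>0$, write $g = g\,\mathbb{1}_{\{g\leqslant M\}} + g\,\mathbb{1}_{\{g > M\}}$ and estimate
\begin{align*}
\int \phi_n\, g\, d\pi \leqslant M\int \phi_n\, d\pi + \|f\|_\infty \int g\,\mathbb{1}_{\{g>M\}}\, d\pi.
\end{align*}
Given $\epsilon>0$, first choose $M$ large enough so that $\|f\|_\infty \int g\,\mathbb{1}_{\{g>M\}} d\pi < \epsilon/2$, which is possible by dominated convergence applied to $g \in L^1(\pi)$. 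Then, $M$ being fixed, the hypothesis allows one to pick $N$ such that $M \int \phi_n\, d\pi < \epsilon/2$ for $n \geqslant N$, giving the desired conclusion.

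There is no real obstacle; the only place to be attentive is the need for truncation, since one cannot directly invoke dominated convergence with $g$ as a dominating function against $\phi_n$ (convergence $\int \phi_n d\pi \to 0$ is in $L^1(\pi)$, which does not by itself imply $\int \phi_n g\, d\pi \to 0$ without some uniform integrability, supplied here by the uniform bound on $\phi_n$).
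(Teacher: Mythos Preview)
Your proof is correct and essentially identical to the paper's own argument: both introduce the Radon--Nikodym derivative $g=d\nu/d\pi$ (the paper calls it $q$), define $\phi_n(x)=\mathbb{E}_x[f(X_1,\ldots,X_n)]$ (the paper calls it $g_n$), and split the integral via truncation at a level $M$ (the paper calls it $A$), bounding the tail by $\|f\|_\infty\int g\,\mathbb{1}_{\{g>M\}}\,d\pi$ --- which the paper writes equivalently as $M\,\mathbb{P}_\nu(q(X_0)>A)$ --- and then letting the truncation level grow.
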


\begin{proof}
Denote by $q$ the Radon–Nikodym derivative of $\nu $ with respect to $\pi$. Let
$$g_n(x) = \mathbb E_x [ f(X_1,\ldots X_n)],$$
and $M>0$ be such that $\sup_{n\geqslant 1} f(x_1,\ldots x_n) <M$ for every sequence $(x_n)_{n\in\mathbb N^*}$. We have
\begin{align*}
\mathbb E_\nu f(X_1,\ldots X_n) &= \int  g_n(x) d\nu (x)\\
&= \int   g_n(x) q(x) d\pi (x)\\
&\leqslant A  \int g_n(x) d\pi(x) +\int   g_n(x) q(x) \mathbb{1}_{q(x)>A} d\pi (x) \\
&= A  \mathbb E_\pi f(X_1,\ldots X_n) +\mathbb E_{\nu} [  g_n(X_0)  \mathbb{1}_{q(X_0)>A}]\\
&\leqslant  A  \mathbb E_\pi f(X_1,\ldots X_n) +M\mathbb P_{\nu} (q(X_0)>A),
\end{align*}
for any $A>0$. In the previous display, the term on the right-hand side can be made arbitrarily small by taking $A$ large and for any such $A$, the term on the left-hand side goes to $0$ by assumption. 
\end{proof}

For application purposes, this simple lemma is fine. 
Notice however that
by Corollary~6.9 of \cite{nummelin:1984}, under an additional aperiodicity assumption, 
the distribution of our Harris chain converges in total variation to $\pi$ as soon as $\mathbb{E}_\pi[\tau_A]<\infty$ (see also Definition~5.5 and Proposition~5.15). In view of the equations (\ref{c0tau0}) and (\ref{eq:inequality_invariant_measu}),
this means that $\sup_{x\in A}E_x[\tau_A^2]<\infty$. The control of the bound in Theorem \ref{rosenth}
already requires this.
Given this, it is not difficult to 
check that the conclusion of Lemma \ref{lemma:initial_measure} holds true even if $\nu$ is 
a Dirac measure $\delta_x$,
under the additional assumption that for all $k\in \{1,\ldots n\}$
\begin{align*}
\sup_{(x_1,\dots x_n,y) \in \mathbb R^{n+1}}|f(x_1,\ldots x_n) - f(x_1,\dots x_{k-1},y,x_{k+1},\ldots x_n)|
= \varepsilon_n\rightarrow 0.
\end{align*}
This is obviously satisfied when $f$ is an empirical mean over uniformly bounded terms. We have indeed for any fixed $x_0$
\begin{align*}
\mathbb E_x f(X_1,\ldots X_n) 
&=\mathbb E_x [f(x_0,\dots x_0,X_{k+1},\ldots X_n)]+k O(\varepsilon_n)\\
&=\int \mathbb E_y[f(x_0,\dots x_0,X_{1},\ldots X_{n-k})]P^k(x,dy)+k O(\varepsilon_n)\\
&= \mathbb E_\pi [f(x_0,\dots x_0,X_{k+1},\dots X_n)] +O(\|\pi-P^k(x,.)\|)f_\infty+k O(\varepsilon_n)\\
&= \mathbb E_\pi [f(X_1,\dots X_n)]+O(\|\pi-P^k(x,.)\|)f_\infty +2k O(\varepsilon_n).
\end{align*}
This remark is of course not new, and is related to the coupling properties of the Harris chains, e.g., Proposition~29 in \cite{robros}.

\end{appendices}


\bibliographystyle{chicago}
\bibliography{biblio_accel}

\end{document}